\def\keywords#1{\begin{center}{\bf Keywords}\\{#1}\end{center}} %
\newfont{\eufm}{eufm10 scaled\magstep1}
\newcommand{\cA}{\mathcal{A}}
\newcommand{\cC}{\mathcal{C}}
\newcommand{\cI}{\mathcal{I}}
\newcommand{\cD}{\mathcal{D}}
\newcommand{\cB}{\mathcal{B}}
\newcommand{\cV}{\mathcal{V}}
\newcommand{\cU}{\mathcal{U}}
\newcommand{\cM}{\mathcal{M}}
\newcommand{\cP}{\mathcal{P}}
\newcommand{\cF}{\mathcal{F}}
\newcommand{\cY}{\mathcal{Y}}
\newcommand{\cE}{\mathcal{E}}
\newcommand{\cL}{\mathcal{L}}
\newcommand{\cH}{\mathcal{H}}
\newcommand{\cS}{\mathcal{S}}
\newcommand{\cQ}{\mathcal{Q}}
\newcommand{\cO}{\mathcal{O}}
\newcommand{\cX}{\mathcal{X}}
\newcommand{\bbN}{\mathbb{N}}
\newcommand{\bbZ}{\mathbb{Z}}
\newcommand{\bbC}{\mathbb{C}}
\newcommand{\bbR}{\mathbb{R}}
\newcommand{\bbQ}{\mathbb{Q}}
\newcommand{\bbP}{\mathbb{P}}
\newcommand{\bbF}{\mathbb{F}}
\newcommand{\bbE}{\mathbb{E}}
\newcommand{\bbG}{\mathbb{G}}
\newcommand{\bbK}{\mathbb{K}}
\newcommand{\bbX}{\mathbb{X}}
\newcommand{\bbD}{\mathbb{D}}
\def\ags{{\rm ags}}
\def\para{\vspace{2mm}}
\def\dres{\partial{\rm Res}}
\def\res{{\rm Res}}
\def\dfres{\partial{\rm FRes}}
\def\cfres{\rm CFRes}
\def\ps{{\rm ps}}
\def\PS{{\rm PS}}
\def\rank{{\rm rank}}
\def\ord{{\rm ord}}
\def\lord{{\rm lord}}
\def\sat{{\rm sat}}
\def\min{{\rm min}}
\def\max{{\rm max}}
\def\gcd{{\rm gcd}}
\def\Jac{{\rm Jac}}
\def\c{{\rm c}}
\def\ldeg{{\rm ldeg}}
\def\trdeg{{\rm trdeg}}
\def\vol{{\rm vol}}
\newtheorem{thm}{Theorem}[section]
\newtheorem{lem}[thm]{Lemma}
\newtheorem{cor}[thm]{Corollary}
\newtheorem{prop}[thm]{Proposition}
\newtheorem{defi}[thm]{Definition}
\newtheorem{rem}[thm]{Remark}
\newtheorem{ex}[thm]{Example}
\newtheorem{alg}[thm]{Algorithm}
\date{} %
\begin{document}
\title{Differential elimination by differential specialization of Sylvester style matrices}

\author{Sonia L. Rueda\\
Dpto. de Matem\' atica Aplicada, E.T.S. Arquitectura.\\
Universidad Polit\' ecnica de Madrid.\\
Avda. Juan de Herrera 4, 28040-Madrid, Spain.\\
\tt{sonialuisa.rueda@upm.es}}

\maketitle

\thispagestyle{empty}

\begin{abstract}
Differential resultant formulas are defined, for a system $\cP$ of $n$ ordinary Laurent differential polynomials in $n-1$ differential variables. These are determinants of coefficient matrices of an extended system of polynomials obtained from $\cP$ through derivations and multiplications by Laurent monomials. To start, through derivations, a system $\ps(\cP)$ of $L$ polynomials in $L-1$ algebraic variables is obtained, which is non sparse in the order of derivation. This enables the use of existing formulas for the computation of algebraic resultants, of the multivariate sparse algebraic polynomials in $\ps(\cP)$, to obtain polynomials in the differential elimination ideal generated by $\cP$. The formulas obtained are multiples of the sparse differential resultant defined by Li, Yuan and Gao, and provide order and degree bounds in terms of mixed volumes in the generic case.
\end{abstract}

\keywords{differential elimination, Laurent differential polynomial, sparse resultant, differential specialization, sparse differential
resultant}


\section{Introduction}\label{sec-differential resultant formulas}

The algebraic treatment via symbolic computation of differential equations has gained importance in the last years \cite{WZ}, \cite{MM}, \cite{RW}. In addition, algebraic and differential elimination techniques have proven to be relevant tools in constructive, algorithmic algebra and symbolic computation \cite{Mishra}, \cite{Cox}, \cite{Cox-2}, \cite{Ritt}, \cite{Kol}.
This work establishes a bridge between the differential elimination problem for systems of ordinary differential polynomials and the use of sparse algebraic resultants. Let us consider a system of two ordinary differential polynomials in the differential indeterminates $x$ and $y$,
\begin{equation}\label{eq-deg2ord1}
\begin{array}{l}
f_1(x)=y'+y x+  x' + x x' +y x^2 +y' (x')^2,\\
f_2(x)=y+y' x+ y x' +y^2 x x' + x^2 + (x')^2.
\end{array}
\end{equation}
To eliminate the differential indeterminate $x$ (and all its derivatives), they can be seen as two differential polynomials in the differential indeterminate $x$, whose coefficients are polynomials in the differential indeterminate $y$.

\para
Differential elimination for differential polynomials can be achieved by characteristic set methods via symbolic computation algorithms \cite{H}, \cite{BLM} (implemented in the Maple package {\rm diffalg}, \cite{BH} and in the {\rm BLAD libraries} \cite{BBLAD} respectively), see also \cite{GLY}, \cite{Ritt}. These methods do not have an elementary complexity bound \cite{GKOS} and, the development of algorithms based on order and degree bounds, of the output elimination polynomials, would contribute to improve the complexity. Searching for order and degree bounds of the elimination polynomials is a problem closely related to the study of differential resultants.

\para
For a system of sparse algebraic multivariate polynomials Canny and Emiris defined in \cite{CE} a Sylvester type matrix, whose determinant is a multiple of the sparse algebraic resultant, in the generic case (defined in \cite{GKZ}). Furthermore, the sparse multivariate algebraic resultant can be represented as the quotient of two determinants, as proved in \cite{DA}. These so called Macaulay style formulas provide degree bounds and furthermore methods to predict the support of the sparse algebraic resultant. While the studies and achievements on algebraic resultants are quite numerous, the differential case is at an initial state of development. A rigorous definition of the differential resultant $\dres(\frak{P})$, of a set $\frak{P}$ of $n$ sparse generic ordinary Laurent differential polynomials in $n-1$ differential variables, has been recently presented in \cite{LYG} (and in \cite{GLY}, for the non sparse nonhomogeneous polynomial case), together with a single exponential algorithm in terms of bounds for degree and order of derivation.
A matrix representation of the sparse differential resultant does not exist even for the simplest cases and,
as noted in \cite{LYG}, having Macaulay style formulas in the differential case would improve the existing bounds for degree and order. The study of such formulas is the basis for efficient computation algorithms and, it promises to have a great contribution to the development and applicability of differential elimination techniques.

\para
The first attempt to give Macaulay style formulas for a system $\cP$ of $n$ ordinary differential polynomials, in $n-1$ differential variables, was made by G. Carr\`{a}-Ferro in \cite{CFproc}. Previous definitions of differential resultants were given for two ordinary differential operators, \cite{BT}, \cite{Ch} ( refer to \cite{CFproc}, \cite{LYG} for an extended history of these developments). The differential resultant $\cfres(\cP)$ of $\cP$ defined by Carr\`{a}-Ferro is the algebraic resultant of Macaulay \cite{Mac}, of a set of derivatives of the differential polynomials in $\cP$.
For two non sparse differential polynomials of order $1$ and degree $2$, say \eqref{eq-deg2ord1},
$\cfres(\cP)$  is the Macaulay algebraic resultant of the polynomial set $\ps=\{f_1,f'_1,f_2, f'_2\}$. This is the greatest common divisor of the determinant of all the minors of maximal order of a matrix $\cM$, whose columns are indexed by all the monomials
in $x$, $x'$ and $x''$ of degree less than or equal to $5$. The rows of $\cM$ are the coefficients of polynomials obtained by multiplying the polynomials in $\ps$ by certain monomials in $x$, $x'$ and $x''$, see \cite{CF97} and \cite{R10} for details. Observe that, even if $f_1$ and $f_2$ are nonsparse in $x$ and $x'$, the extended system $\ps$ is sparse. The polynomials in $\ps$ do not contain the monomial $(x'')^2$, thus the columns indexed by $(x'')^i$, $i=2,\ldots ,5$ are all zero and $\cfres(f_1,f_2)=0$.

Carr\`{a}-Ferro's construction is not taking into consideration the sparsity of differential polynomials and therefore it is zero in many cases, giving thus no further information. Contemporary of Carr\`{a}-Ferro's construction is the definition of the sparse algebraic resultant in \cite{GKZ} and \cite{St}. Later on, methods to compute sparse algebraic resultants were developed in \cite{CE}, \cite{DA} via Sylvester style matrices.
Therefore, an alternative natural approach to treat example \eqref{eq-deg2ord1} (using Carr\`{a}-Ferro's philosophy) would be to consider the sparse algebraic resultant formula of $\ps$ given in \cite{DA}.
A determinantal formula for $2$ generic differential polynomials of arbitrary degree and order $1$ has been recently presented in \cite{ZYG}.

The system \eqref{eq-deg2ord1} can only be sparse in the degree but if we considered the elimination of two or more differential variables, the system can be also sparse in the order of derivation of such variables. This fact motivated the works in \cite{RS}, \cite{R11} and \cite{R13} where the linear case is considered, to focus on the study of the sparsity with respect to the order of derivation, as defined in Section \ref{sec-L pols in L-1 vars}. An easy example is given by the next system of $3$ polynomials
\[\cP=\{f_1=z+x+y+y',f_2=z+t x'+y'',f_3=z+x+y'\}\]
in $3$ differential variables $x$, $y$ and $z$ w.r.t. the derivation $\partial/\partial t$.
The differential resultant of Carra'Ferro is the determinant of the next coefficient matrix, whose columns are indexed by
$y^{v}$, $x^{v}$, $\ldots$ ,$y'$, $x'$, $y$,$x$,$1$,
\[
\left[
\begin{array}{rrrcrcrcrrc}
1 & 0 & 1 & 1 & 0 & 0 & 0 & 0 & 0 & 0 & z''' \\
0 & 0 & 1 & 0 & 1 & 1 & 0 & 0 & 0 & 0 & z'' \\
0 & 0 & 0 & 0 & 1 & 0 & 1 & 1 & 0 & 0 & z' \\
0 & 0 & 0 & 0 & 0 & 0 & 1 & 0 & 1 & 1 & z \\
1 & 0 & 0 &  t & 0 & 2 & 0 & 0 & 0 & 0 & z'' \\
0 & 0 & 1 & 0 & 0 & t & 0 & 1 & 0 & 0 & z' \\
0 & 0 & 0 & 0 & 1 & 0 & 0 & t & 0 & 0 & z \\
1 & 0 & 0 & 1 & 0 & 0 & 0 & 0 & 0 & 0 & z''' \\
0 & 0 & 1 & 0 & 0 & 1 & 0 & 0 & 0 & 0 & z'' \\
0 & 0 & 0 & 0 & 1 & 0 & 0 & 1 & 0 & 0 & z' \\
0 & 0 & 0 & 0 & 0 & 0 & 1 & 0 & 0 & 1 & z
\end{array}
 \right].
\]
Thus $\cfres(\cP)=0$ and the reason is the sparsity in the order of derivation of the variable $x$ (the column indexed by $x^{v}$ is zero).

\para

In Section \ref{sec-differential resultant formulas}, differential resultant formulas are defined for a system $\cP$ of $n$ ordinary Laurent differential polynomials in $n-1$ differential variables. These are determinants of coefficient matrices of an extended system of polynomials obtained from $\cP$ through derivations and multiplications by Laurent monomials (Carr\`{a}-Ferro's construction is a particular case).
To built such formulas, in Section \ref{sec-L pols in L-1 vars} the results in \cite{R13} are extended to the nonlinear case, namely, an extended system $\ps(\cP)$ of $L$ polynomials in $L-1$ algebraic variables is obtained through the appropriate number of derivations of the elements of $\cP$, which is non sparse in the order of derivation. As explained in Section \ref{sec-L pols in L-1 vars}, this is only possible for systems $\cP$ that verify the "super essential" condition, but it is there proved that every system contains such a subsystem. For $n\geq 3$, this is a necessary step to be able to use the existing formulas for sparse algebraic polynomials in \cite{CE} applied to the system $\ps(\cP)$. An algebraic generic sparse system $\ags(\cP)$ of $L$ polynomials in $L-1$ algebraic variables associated to $\cP$ is defined, as explained in Section \ref{sec-sparse algebraic resultant}, from which a Sylvester style matrix $S(\cP)$ can be constructed using the results in \cite{CE}. The specialization of $S(\cP)$, to the differential coefficients of $\ps(\cP)$, gives a determinantal formula $\dfres(\cP)$, as explained in Section \ref{sec-differential specialization}. In \cite{CE}, $\det(S(\cP))$ is guaranteed to be nonzero (under some conditions) so, if $\dfres(\cP)=0$ then we know that it is not because of sparsity reasons, but due to the specialization final step. In Section \ref{sec-order and degree bounds} the generic case is treated. It is shown how these formulas provide order and degree bounds for the sparse differential resultant $\dres(\cP)$ of $\cP$ defined by Li, Yuan and Gao in \cite{LYG}. To achieve this goal, conditions for $\dres(\cP)$ to be a factor of the given differential resultant formulas are explored, providing degree bounds of $\dres(\cP)$ in terms of mixed volumes, under the appropriate conditions.

\section{Differential resultant formulas}\label{sec-differential resultant formulas}
Let $\cD$ be an ordinary differential domain with derivation
$\partial$. Let $U=\{u_1,\ldots ,u_{n-1}\}$ be a set of differential indeterminates over $\cD$.
By $\bbN$ we mean the natural numbers including $0$.
For $k\in\bbN$, we denote by $u_{j,k}$ the $k$th derivative of $u_j$ and for $u_{j,0}$ we simply write $u_j$.
We denote by $\{U\}$ the set of derivatives of the elements of $U$,
$\{U\}=\{\partial^k u\mid u\in U,\; k\in \bbN\}$, and  by
$\cD\{U\}$ the ring of differential polynomials in the differential
indeterminates $U$, which is a differential ring with derivation $\partial$.
For definitions in differential algebra we refer to \cite{Ritt} and \cite{Kol}.

As introduced in \cite{LYG}, the ring of Laurent differential polynomials generated by $U$ is defined to be
\[\cD\{U^{\pm}\}:=\cD[u_{j,k},u_{j,k}^{-1}\mid j=1,\ldots ,n-1, k\in\bbN]\},\]
which is a differential ring under the derivation $\partial$, (emphasize that $\cD\{U^{\pm}\}$ is just notation).
Given a subset $\cU\subset\{U\}$, we denote by $\cD[\cU]$ the ring of polynomials in the indeterminates $\cU$ and by
$\cD[\cU^{\pm}]$ the ring of Laurent polynomials in the variables $\cU$, that is
\[\cD[\cU^{\pm}]:=\cD[u,u^{-1}\mid u\in\cU].\]

Given $f\in\cD\{U^{\pm}\}$, $f=\sum_{\iota=1}^m \theta_{\iota} \omega_{\iota}$, where $\theta_{\iota}\in\cD$ and $\omega_{\iota}$ is a Laurent differential monomial in $\cD\{U^{\pm}\}$. Let us denote the {\sf differential support in $u_j$} of $f$ by
\[\frak{S}_j (f)=\{k\in\bbN \mid \partial \omega_{\iota}/ \partial u_{j,k}\neq 0 \mbox{ for some }\iota\in\{1,\ldots ,m\}\},\]
to define $\ord(f,u_j):=\max\,\frak{S}_j(f)$ and $\lord (f,u_j):=\min\,\frak{S}_j(f)$ if $\frak{S}_j (f)\neq \emptyset$, otherwise $\ord(f,u_j)=\lord(f,u_j)=-\infty$.
Thus, the order of $f$ is the maximum of $\{\ord(f,u)\mid u\in U\}$.

\para
Let $\cP:=\{f_1,\ldots ,f_n\}$ be a system of differential polynomials in $\cD\{U^{\pm}\}$. We assume that:
\begin{enumerate}
\item[($\cP$1)] The order of $f_i$ is $o_i\geq 0$, $i=1,\ldots ,n$. So that no $f_i$ belongs to $\cD$.
\item[($\cP$2)] $\cP$ contains $n$ distinct polynomials.
\item[($\cP$3)] For every $j\in\{1,\ldots ,n-1\}$ there exists $i\in\{1,\ldots ,n\}$ such that $\frak{S}_j(f_i)\neq \emptyset$.
\end{enumerate}
Let $[\cP]$ denote the differential ideal generated by $\cP$ in $\cD\{U^{\pm}\}$.
Our goal is to obtain elements of the differential elimination ideal $[\cP]\cap\cD$, using differential resultant formulas.

\para
Let us denote by $\partial\cP:=\{\partial^k f_i\mid i=1,\ldots ,n, k\in\bbN\}$ and $f_i^{[L_i]}:=\{\partial^k f_i\mid k\in [0,L_i]\cap\bbN\}$, for $L_i\in\bbN$.
For this purpose, we consider a polynomial subset $\ps$ of $\partial\cP$, a set of differential indeterminates $\cU\subset\{U\}$ and sets of Laurent differential monomials $\Omega_f$, $f\in\ps$, $\Omega$, in $\cD[\cU^{\pm}]$, verifying:
\begin{itemize}
\item[(\ps1)] $\ps=\cup_{i=1}^n f_i^{[L_i]}$, $L_i\in\bbN$,
\item[(\ps2)] $\ps\subset\cD [\cU^{\pm}]$ and $|\cU|= |\ps|-1$,
\item[(\ps3)] $\sum_{f\in\ps} |\Omega_f|=|\Omega|$ and $\cup_{f\in\ps} \Omega_f f\in \oplus_{\omega\in \Omega}\cD \omega$, ($|\Omega|$ denotes de number of elements of $\Omega$).
\end{itemize}

Under assumptions (\ps1), (\ps2) and (\ps3), we consider a total set of polynomials $\PS:=\cup_{f\in\ps} \Omega_f f$ whose elements are
\[p=\sum_{\omega\in \Omega} \theta_{p,\omega} \omega,\mbox{ with }\theta_{p,\omega}\in\cD.\] The coefficient matrix of the elements in $\PS$ as polynomials in the monomials $\Omega$, $\cM(\PS,\Omega)=(\theta_{p,\omega})$, $p\in \PS$, $\omega\in \Omega$, is an $|\Omega|\times |\Omega|$ matrix. We call
\begin{equation}\label{eq-formula}
\det (\cM(\PS,\Omega))
\end{equation}
a {\sf differential resultant formula} for $\cP$.

\begin{ex}\label{ej-CF}
A differential resultant formula was defined by Carr\`{a}-Ferro in \cite{CFproc} for a system $\cP$ of nonhomogeneous differential polynomials in $\cD\{U\}$. In \cite{CFproc}, $L_i=N-o_i$, $i=1,\ldots ,n$, $N:=\sum_{i=1}^n o_i$ and $\cU=\{u_{j,k}\mid k\in [0,N]\cap\bbN,\, j=1,\ldots ,n-1\}$. The sets of monomials $\Omega_f$, $f\in\ps$ and $\Omega$ are taken so that $\cM (\PS,\Omega)$ is the specialization of the numerator matrix of the Macaulay algebraic resultant \cite{Mac} of generic algebraic polynomials $P_f$, $f\in\ps$ of degree $\deg(P_f)=\deg(f)$ in the variables $\cU$. See \cite{CFproc} and \cite{R10} for a detailed construction and examples.
\end{ex}

If (\ps2) holds, the set
\begin{align*}
\nu(\ps):&=\{u_{j,k}\in\cU\mid k\in\frak{S}_j(f)\mbox{ for some }f\in\ps, j\in\{1,\ldots ,n-1\}\}\subseteq \cU,
\end{align*}
verifies $|\nu(\ps)| \leq |\ps|-1$. Observe that, if $|\nu(\ps)| > |\ps|-1$ we cannot guarantee the elimination of the variables in $\nu(\ps)$.

\section{A system $\ps(\cP)$ of $L$ polynomials in $L-1$ algebraic variables}\label{sec-L pols in L-1 vars}

In this section, we construct $\ps(\cP)\subset \partial\cP$ and $\cV(\cP)\subset\{U\}$ verifying (\ps1), (\ps2) and give conditions on $\cP$ so that $\cV(\cP)=\nu(\ps(\cP))$. In particular, it is precisely stated what it means for the system $\cP$ to be sparse in the order and under what conditions can this phenomenon be avoided.

\para
Let us denote $o_{i,j}:=\ord(f_i,u_j)$, which equals $-\infty$ if $\frak{S}_j(f_i)=\emptyset$ and belongs to $\bbN$ otherwise.
Let us define the {\sf order matrix} of $\cP$ by $\cO(\cP)=(o_{i,j})$.
Given $\cP_i:=\cP\backslash \{f_i\}$, $i=1,\ldots,n$, the diagonals of the matrix $\cO(\cP_i)$ are indexed by the set $\Gamma_{i}$ of all possible bijections between $\{1,\ldots ,n\}\backslash \{i\}$ and $\{1,\ldots ,n-1\}$. The {\sf Jacobi number} $J_i(\cP)$ of the matrix $\cO(\cP_i)$ (see \cite{LYG}, Section 5.2) equals
\[J_i(\cP):=\Jac(\cO(\cP_i)):=\max \left\{\sum_{j^\in\{1,\ldots ,n\}\backslash \{i\}} o_{j,\mu(j)}\mid \mu\in\Gamma_i\right\}.\]
Throughout the paper, if there is no need to specify, we will simply write $J_i$.
Observe that $J_i$ is either $-\infty$ or it belongs to $\bbN$. There exists $\mu_i\in\Gamma_i$ such that $J_i=\sum_{j\in\{1,\ldots ,n\}\backslash \{i\}} o_{j,\mu_i(j)}$ but $\mu_i$ may not be unique.

\para
The situation where $J_i\geq 0$, $i=1,\ldots ,n$ is of special interest. Let $x_{i,j}$, $i=1,\ldots ,n$, $j=1,\ldots ,n-1$ be algebraic indeterminates over $\bbQ$, the field of rational numbers.
Let $X(\cP)=(X_{i,j})$ be the $n\times (n-1)$ matrix, such that
\begin{equation}\label{eq-XP}
X_{i,j}:=\left\{\begin{array}{ll}x_{i,j},&\frak{S}_j(f_i)\neq\emptyset,\\ 0,&\frak{S}_j(f_i)=\emptyset.\end{array}\right.
\end{equation}
Let $X(\cP_i)$, $i=1,\ldots ,n$, be the submatrix of $X(\cP)$ obtained by removing its $i$th row.
It follows easily that
\begin{lem}\label{lem-JiXi}
$J_i\geq 0 \Leftrightarrow \det (X(\cP_i))\neq 0$, $i=1,\ldots ,n$.
\end{lem}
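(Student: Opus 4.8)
The plan is to unwind both sides to a common combinatorial statement about the bipartite structure encoded by the matrix $X(\cP_i)$. First I would note that $X(\cP_i)$ is a square $(n-1)\times(n-1)$ matrix whose $(j,k)$ entry is either the algebraic indeterminate $x_{j,k}$ (when $\frak{S}_k(f_j)\neq\emptyset$) or $0$ (when $\frak{S}_k(f_j)=\emptyset$), where $j$ ranges over $\{1,\ldots,n\}\setminus\{i\}$ and $k$ over $\{1,\ldots,n-1\}$. Since the nonzero entries are \emph{distinct} indeterminates, no cancellation can occur in the Leibniz expansion of $\det(X(\cP_i))$: each bijection $\mu\in\Gamma_i$ contributes the monomial $\pm\prod_{j\neq i} X_{j,\mu(j)}$, and two distinct bijections give distinct monomials. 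Hence $\det(X(\cP_i))\neq 0$ if and only if there exists at least one $\mu\in\Gamma_i$ for which every factor $X_{j,\mu(j)}$ is nonzero, i.e. for which $\frak{S}_{\mu(j)}(f_j)\neq\emptyset$ for all $j\in\{1,\ldots,n\}\setminus\{i\}$.

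Next I would translate the condition $J_i\geq 0$ in the same terms. By definition $J_i=\max\{\sum_{j\neq i} o_{j,\mu(j)}\mid \mu\in\Gamma_i\}$, and each $o_{j,k}=\ord(f_j,u_k)$ equals a natural number when $\frak{S}_k(f_j)\neq\emptyset$ and equals $-\infty$ otherwise. A sum over a bijection $\mu$ is $\geq 0$ (equivalently, lies in $\bbN$ rather than being $-\infty$) exactly when none of its summands is $-\infty$, that is, exactly when $\frak{S}_{\mu(j)}(f_j)\neq\emptyset$ for every $j\neq i$; and since the convention is that $-\infty$ added to anything stays $-\infty$ while the max of a set containing a natural number discards the $-\infty$ entries, $J_i\geq 0$ holds if and only if \emph{some} such bijection $\mu$ exists. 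This is literally the same combinatorial condition reached in the previous paragraph, so the two characterizations coincide and the equivalence follows. I would also record the degenerate observation that if no admissible $\mu$ exists then $J_i=-\infty$ and simultaneously $\det(X(\cP_i))=0$, so the biconditional holds in that case too.

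The only genuinely delicate point — the one I would state carefully rather than wave at — is the no-cancellation claim: that because the nonzero entries $x_{j,k}$ are algebraically independent over $\bbQ$ (indeed, are distinct indeterminates each appearing in at most one position of $X(\cP_i)$), the Leibniz expansion cannot vanish identically unless every term is the zero monomial. This is where the hypothesis that the $X_{i,j}$ are indeterminates (rather than, say, the actual orders or arbitrary domain elements) is used in an essential way; with generic-but-specialized entries one would need a nonvanishing argument instead. Everything else is bookkeeping about the $-\infty$ convention, so I would keep the write-up to a few lines, emphasizing only that $\det(X(\cP_i))$, viewed as a polynomial in the $x_{j,k}$, has a monomial for each perfect matching in the bipartite graph with an edge $j\!-\!k$ whenever $\frak{S}_k(f_j)\neq\emptyset$, and that $J_i\geq 0$ is precisely the existence of such a perfect matching.
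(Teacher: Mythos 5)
Your proof is correct and follows essentially the same route as the paper's: both directions reduce to the observation that $J_i\geq 0$ and $\det(X(\cP_i))\neq 0$ are each equivalent to the existence of a nonzero diagonal of $X(\cP_i)$. The only difference is that you spell out the no-cancellation argument (distinct indeterminates in distinct positions give distinct Leibniz monomials), which the paper leaves implicit when it passes from a nonzero diagonal product to $\det(X(\cP_i))\neq 0$; this is a worthwhile clarification but not a different proof.
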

\begin{proof}

If $\det(X(\cP_i))\neq 0$ then the matrix $X(\cP_i)$ has a nonzero  diagonal. Thus, there exists $\mu\in\Gamma_i$ such that \[J_i\geq \sum_{j^\in\{1,\ldots ,n\}\backslash \{i\}} o_{j,\mu(j)}\geq 0.\]
Conversely if $J_i\geq 0$, there exists $\mu\in\Gamma_i$ such that $J_i=\sum_{j^\in\{1,\ldots ,n\}\backslash \{i\}} o_{j,\mu(j)}\geq 0$. Thus $\prod_{j^\in\{1,\ldots ,n\}\backslash \{i\}}x_{j,\mu(j)}\geq 0$ and $\det(X(\cP_i))\neq 0$.
\end{proof}

\para
The notion of super essential system of differential polynomials was introduced in \cite{R13}, for systems of linear differential polynomials and it is extended here to the nonlinear case.

\begin{defi}\label{def-superess}
The system $\cP$ is called {\sf super essential} if $\det(X(\cP_i))\neq 0$, $i=1,\ldots ,n$.
Equivalently, by Lemma \ref{lem-JiXi}, $\cP$ is super essential if $J_i\geq 0$, $i=1,\ldots ,n$.
\end{defi}

\para
For $j=1,\ldots ,n-1$ let us define integers in $\bbN$
\begin{align*}
&\gamma_j(\cP):=\min\{\lord(f_i,u_j)\mid \frak{S}_j(f_i)\neq \emptyset,\, i=1,\ldots ,n\},\\ &\gamma(\cP):=\sum_{j=1}^{n-1}\gamma_j(\cP).
\end{align*}
We write just $\gamma_j$ and $\gamma$ when there is no room for confusion.
If $J_i\geq 0$, $i=1,\ldots ,n$ then $J_i-\gamma\geq 0$ and the sets of lattice points
$[0,J_i-\gamma]\cap\bbN$ are non empty. For $i=1,\ldots ,n$, we define the set of differential polynomials
\begin{equation}\label{eq-ps}
\ps(\cP):=\cup_{i=1}^n f_i^{[J_i-\gamma]},
\end{equation}
containing $L:=\sum_{i=1}^n (J_i-\gamma+1)$ differential polynomials, whose variables belong to the set $\cV(\cP)$  of differential indeterminates
\[\cV(\cP):=\{u_{j,k}\mid k\in [\gamma_j, M_j]\cap\bbN,\, j=1, \ldots , n-1\},\]
with $M_j:=m_j-\gamma$ and $m_j:=\max\{o_{i,j}+J_i-\gamma\mid i=1,\ldots ,n\}$.
By \cite{LYG}, Lemma 5.6, if $J_i\geq 0$, $i=1,\ldots ,n$ then $\sum_{i=1}^n J_i=\sum_{j=1}^{n-1} m_j$. Thus the number of elements of $\cV(\cP)$ equals
\[\sum_{j=1}^{n-1} (M_j-\gamma_j+1)=\sum_{j=1}^{n-1} (m_j-\gamma_j-\gamma+1)=\sum_{i=1}^n J_i-n\gamma+n-1=L-1.\]
Observe that $\nu(\ps(\cP))\subseteq \cV(\cP)$ and given $j\in\{1,\ldots ,n-1\}$ we have
\begin{equation}\label{eq-sparseord}
\cup_{f\in\ps(\cP)}\frak{S}_j(f)\subseteq [\gamma_j, M_j]\cap\bbN,
\end{equation}
but we cannot guarantee that the equality holds.

\begin{defi}\label{def-sparseo}
If there exists $j$ such that (\ref{eq-sparseord}) is not an equality, we will say that the system $\cP$ is {\sf sparse in the order}.
\end{defi}

\para
It can be proved as in \cite{R13}, Section 4, that every system $\cP$ contains a super essential subsystem $\cP^{*}$ and if $\rank(X(\cP))=n-1$ then $\cP^{*}$ is unique. Namely, the system $\cP^*$ can be obtained as follows:
\begin{enumerate}
\item Consider the system $\bbP=\{p_i=c_i+\sum_{j=1}^{n-1}X_{i,j} u_j\mid l=1,\ldots, m,\}$ of algebraic polynomials in $\bbK[c_1,\ldots,c_m][U]$, $\bbK:=\bbQ(X_{i,j}\mid X_{i,j}\neq 0)$.

\item Compute a reduced Gr\"{o}bner basis $\cB=\{e_0,e_1,\ldots ,e_{m-1}\}$ of the algebraic ideal $(\bbP)$ generated by $\bbP$ in $\bbK[c_1,\ldots ,c_m][U]$, with respect to lex monomial order with $u_1>\cdots >u_{p}>c_1>\cdots>c_m$. We assume that $e_0<e_1<\cdots<e_{m-1}$. By \cite{Cox}, p. 95, Exercise 10, this can be computed through an echelon form of the coefficient matrix of the system $\bbP$.

\item Observe that at least $e_0\in\cB_0:=\cB\cap \bbK[c_1,\ldots ,c_m]$, $e_0=\sum_{l=1}^m \chi_l c_l$, $\chi_l\in\bbK$. Let $\Delta (e_0):=\{l\in\{1,\ldots ,m\}\mid \chi_l\neq 0\}$.

\item $\cP^*:=\{f_l\mid l\in \Delta (e_0)\}$.
\end{enumerate}

\begin{ex}
Let us consider the systems $\cP=\{f_1,f_2,f_3,f_4\}$ and $\cP'=\{f_1,f_2,f_3,f_5\}$ with
\[f_1=2+u_1 u_{1,1}+u_{1,2},f_2=u_1u_{1,2},f_3=u_2u_{3,1},f_4=u_{1,1}u_2,f_5=u_{1,2},\]
\[
X(\cP)=\left(\begin{array}{ccc}
x_{1,1} & 0 &0\\x_{2,1} & 0 &0\\ 0 & x_{3,2} & x_{3,3}\\ x_{4,1} & x_{4,2} & 0
\end{array}\right)
\mbox{ and }
X(\cP')=\left(\begin{array}{ccc}
x_{1,1} & 0 &0\\x_{2,1} & 0 &0\\ 0 & x_{3,2} & x_{3,3}\\ x_{4,1} & 0 & 0
\end{array}\right).
\]
$\cP$ is not super essential but since $\rank(X(P))=3$, it has a unique super essential subsystem, which is $\{f_1,f_2\}$.
$\cP'$ is not super essential and $\rank(X(P'))<3$, super essential subsystems are $\{f_1,f_2\}$, $\{f_1,f_5\}$ and $\{f_2,f_5\}$.
\end{ex}

\para
We prove next that if $\cP$ is super essential then $\cP$ is not sparse in the order (Theorem \ref{thm-columns}). For this purpose we need two preparatory lemmas.

\para
Given $j\in\{1,\ldots ,n-1\}$, the set $\cI(j):=\{i\in\{1,\ldots ,n\}\mid \frak{S}_j(f_i)\neq\emptyset\}$
is not empty, because of assumption ($\cP$3). If $\cP$ is super essential, the next lemma shows, in particular, that $|\cI(j)|\geq 2$. Given $I,I'\in\{1,\ldots ,n\}$, let $\cP_{I,I'}:=\cP\backslash\{f_I,f_{I'}\}$. Let us denote by $X(\cP_{I,I'})^j$ and $\cO(\cP_{I,I'})^j$ the submatrices of $X(\cP_{I,I'})$ and $\cO(\cP_{I,I'})$ respectively, obtained by removing their $j$th column. Observe that $\det (X(\cP_{I,I'})^j)=0$ if and only if $\Jac(\cO(\cP_{I,I'})^j)=-\infty$.

\begin{lem}\label{lem-I_Ip}
Let $\cP$ be super essential and $j\in\{1,\ldots ,n-1\}$.
\begin{enumerate}
\item Given $I\in \cI(j)$, there exists $I'\in\cI(j)\backslash\{I\}$ such that $o_{I,j}-\gamma_j\leq J_{I'}-\gamma$.

\item Given distinct $I, \overline{I}\in \cI(j)$ such that $\Jac(\cO(\cP_{I,\overline{I}})^j)=-\infty$, there exists $I'\in\cI(j)\backslash\{\overline{I}\}$ such that $o_{\overline{I},j}-\gamma_j\leq J_{I'}-\gamma$ and, if $I\neq I'$ then $o_{I',j}-\gamma_j\leq J_{I}-\gamma$.
\end{enumerate}
\end{lem}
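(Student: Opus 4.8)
The plan is to exploit the combinatorial meaning of the Jacobi number as a maximum weight bijection, and to use the super essential hypothesis — i.e. $\det(X(\cP_i))\neq 0$ for all $i$ — as a Hall-type condition guaranteeing that enough nonzero entries exist in the order matrix. Throughout, recall that $\gamma=\sum_{k=1}^{n-1}\gamma_k$ and that for $I\in\cI(j)$ we have $o_{I,j}\geq\gamma_j$, so the inequality $o_{I,j}-\gamma_j\leq J_{I'}-\gamma$ to be proved is equivalent to $o_{I,j}+\sum_{k\neq j}\gamma_k\leq J_{I'}$.

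For part 1, first I would fix $I\in\cI(j)$ and consider the matrix $X(\cP_I)$, which has nonzero determinant by the super essential hypothesis. Hence it has a nonzero diagonal, i.e. a bijection $\sigma$ between $\{1,\ldots,n\}\setminus\{I\}$ and $\{1,\ldots,n-1\}$ with $X_{l,\sigma(l)}\neq 0$ for all such $l$; equivalently $\frak{S}_{\sigma(l)}(f_l)\neq\emptyset$. Let $I'$ be the index with $\sigma(I')=j$; then $I'\in\cI(j)$ and $I'\neq I$. Now build a bijection $\mu\in\Gamma_{I'}$ as follows: send $I\mapsto j$, and for the remaining indices $l\in\{1,\ldots,n\}\setminus\{I,I'\}$ use $\sigma$ restricted appropriately (this is a bijection onto $\{1,\ldots,n-1\}\setminus\{j\}$ precisely because $\sigma$ already matched $I'$ to $j$ and $I$ to something we now discard). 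Then
\[
J_{I'}\geq \sum_{l\in\{1,\ldots,n\}\setminus\{I'\}} o_{l,\mu(l)} \;=\; o_{I,j}+\sum_{l\neq I,I'} o_{l,\sigma(l)} \;\geq\; o_{I,j}+\sum_{k\neq j}\gamma_k,
\]
where the last step uses $o_{l,\sigma(l)}\geq\gamma_{\sigma(l)}$ and the fact that $\{\sigma(l)\mid l\neq I,I'\}=\{1,\ldots,n-1\}\setminus\{j\}$. Rearranging gives $o_{I,j}-\gamma_j\leq J_{I'}-\gamma$, as desired.

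For part 2, the extra hypothesis $\Jac(\cO(\cP_{I,\overline I})^j)=-\infty$, equivalently $\det(X(\cP_{I,\overline I})^j)=0$, says the $(n-2)\times(n-2)$ matrix obtained by deleting rows $I,\overline I$ and column $j$ from $X(\cP)$ is singular — so in any nonzero diagonal of one of the larger matrices, the rows $I$ or $\overline I$ must be the ones picking up column $j$. I would start from $\det(X(\cP_{\overline I}))\neq 0$, extract a nonzero diagonal $\sigma$ on $\{1,\ldots,n\}\setminus\{\overline I\}$, and let $I'=\sigma^{-1}(j)$. The singularity of $X(\cP_{I,\overline I})^j$ forces $I'\in\{I\}$ is impossible unless... — more carefully, if $I'\neq I$ then $\sigma$ restricted to $\{1,\ldots,n\}\setminus\{I,\overline I\}$ would be a nonzero diagonal of $X(\cP_{I,\overline I})^j$, contradicting $\det=0$; hence we must instead argue $I'$ can be taken equal to $I$ or handle the two entries $I,I'$ symmetrically. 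Then, exactly as in part 1, I construct $\mu\in\Gamma_{I'}$ sending $\overline I\mapsto j$ and using $\sigma$ elsewhere to get $o_{\overline I,j}-\gamma_j\leq J_{I'}-\gamma$; and when $I\neq I'$ I run the symmetric construction with the roles swapped, using $\det(X(\cP_I))\neq 0$ and the same singularity obstruction to pin down that the diagonal of $X(\cP_I)$ must route column $j$ through $\overline I$, yielding a $\mu'\in\Gamma_I$ with $I'\mapsto j$ and hence $o_{I',j}-\gamma_j\leq J_I-\gamma$.

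The main obstacle I anticipate is part 2: keeping straight the bookkeeping of which row absorbs column $j$ in each of the several submatrices, and correctly extracting from the single hypothesis $\det(X(\cP_{I,\overline I})^j)=0$ the constraint that simultaneously controls the diagonals of $X(\cP_I)$ and $X(\cP_{\overline I})$. The cleanest route is probably to phrase everything in the language of bipartite matchings: $\det(X(\cP_{I,\overline I})^j)=0$ means there is no perfect matching in the bipartite graph on rows $\{1,\ldots,n\}\setminus\{I,\overline I\}$ and columns $\{1,\ldots,n-1\}\setminus\{j\}$, while perfect matchings do exist after adding back either row $I$ or row $\overline I$ together with column $j$; standard augmenting-path reasoning then shows the new row is forced to be matched to $j$, which is exactly what the Jacobi-number estimate needs. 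I would also double-check the edge case where $\cI(j)$ has exactly two elements, since then $\{I,\overline I\}=\cI(j)$ and some of the index choices degenerate.
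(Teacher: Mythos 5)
Your part 1 is correct and is essentially the paper's own argument: a nonzero diagonal of $X(\cP_I)$ singles out the row $I'$ matched to column $j$, and re-routing that diagonal through $I\mapsto j$ bounds $J_{I'}$ below by $o_{I,j}+\sum_{k\neq j}\gamma_k$.

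Part 2, however, rests on a claim that is false. You assert that the hypothesis $\det(X(\cP_{I,\overline I})^j)=0$ forces any nonzero diagonal of $X(\cP_{\overline I})$ to match row $I$ to column $j$ (and symmetrically for $X(\cP_I)$ and row $\overline I$). The opposite holds: if a nonzero diagonal $\sigma$ of $X(\cP_{\overline I})$ matches row $I''$ to column $j$, then deleting row $I''$ and column $j$ from $\sigma$ leaves a nonzero diagonal of $X(\cP_{\overline I,I''})^j$, so taking $I''=I$ would contradict the hypothesis; hence row $I$ is \emph{never} matched to column $j$. Your purported contradiction in the case $I'\neq I$ fails because $\sigma$ restricted to $\{1,\ldots,n\}\setminus\{I,\overline I\}$ omits column $\sigma(I)\neq j$ rather than column $j$, so it is not a diagonal of $X(\cP_{I,\overline I})^j$ at all. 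A concrete counterexample with $n=3$: let $X(\cP)$ have rows $(x_{1,1},x_{1,2})$, $(x_{2,1},x_{2,2})$, $(x_{3,1},0)$, and take $j=1$, $I=1$, $\overline I=2$. The system is super essential and $X(\cP_{1,2})^1=(0)$ is singular, yet the unique nonzero diagonal of $X(\cP_2)$ matches column $1$ to row $3$, not to row $I=1$.

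As a result you never establish the point that carries all the weight in part 2: that the index $I'$ supplied by part 1 (applied to $\overline I$) also satisfies $\det(X(\cP_{I,I'})^j)\neq 0$ when $I\neq I'$, which is what yields $o_{I',j}-\gamma_j\leq J_I-\gamma$. This is where the paper works hardest: assuming both $X(\cP_{I,\overline I})^j$ and $X(\cP_{I,I'})^j$ singular while $X(\cP_{\overline I,I'})^j$ is nonsingular, the three matrices share the $n-3$ linearly independent rows of $X(\cP_{\overline I,I,I'})^j$, so rows $I'$ and $\overline I$ of $X(\cP_I)$ each reduce to vectors supported only in column $j$, forcing $\det(X(\cP_I))=0$ and contradicting super essentiality. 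Some such exchange or rank argument (or its translation into augmenting paths on matchings) is indispensable, and your sketch does not contain it.
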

\begin{proof}
We denote $X(\cP_{I,I'})^j$ and $\cO(\cP_{I,I'})^j$ simply by $\cX_{I,I'}$ and $\cO_{I,I'}$ in this proof. Let  $X(P)^j$ be the submatrix of $X(\cP)$ obtained by removing its $j$th column. By $r_I$ we denote the row corresponding to $f_I$ in $X(\cP)^j$ and by $\cX_{\overline{I},I,I'}$ the matrix  $X(\cP_{\overline{I},I,I'})^j$ with $\cP_{\overline{I},I,I'}:=\cP\backslash\{f_{\overline{I}},f_I,f_{I'}\}$.
\begin{enumerate}
\item By definition of super essential system, $X(\cP_{I})$ contains a nonzero diagonal. That is, there exists $I'\in \cI(j)\backslash\{I\}$ and an $(n-2)\times (n-2)$ non singular submatrix $\cX_{I,I'}$ of $X(\cP_I)$. That is $\Jac(\cO_{I,I'})\neq -\infty$ and
\[o_{I,j}-\gamma_j\leq \Jac(\cO_{I,I'})+o_{I,j}-\gamma\leq J_{I'}-\gamma.\]

\item By $1$, there exists $I'\in \cI(j)\backslash\{\overline{I}\}$ such that $\Jac(\cO_{\overline{I},I'})\neq -\infty$ and $o_{\overline{I},j}-\gamma_j\leq J_{I'}-\gamma$. If $I\neq I'$, let us assume that $\Jac(\cO_{I,I'})=-\infty$ to get a contradiction. Thus we have $\det(\cX_{\overline{I},I})=0$ and $\det(\cX_{I,I'})=0$.

    \para

    The $(n-2)\times (n-2)$ matrices $\cX_{\overline{I},I}$, $\cX_{\overline{I},I'}$ and $\cX_{I,I'}$ have $n-3$ rows in common, namely $\cX_{\overline{I},I,I'}$. Since $\det(\cX_{\overline{I},I'})\neq 0$, the rows of $\cX_{\overline{I},I,I'}$ are linearly independent. This proves that \[\rank(\cX_{\overline{I},I})=\rank(\cX_{I,I'})=\rank(\cX_{\overline{I},I,I'})=n-3.\]
    Thus row $I'$ of $\cX_{\overline{I},I}$ and row $\overline{I}$ of $\cX_{I,I'}$ are a linear combination of the rows of $\cX_{I,I',\overline{I}}$. Therefore both rows $I'$ and $\overline{I}$ of $X(\cP_I)$ can be reduced to the form $(0,\ldots ,0,\star_j,0,\ldots ,0)$.
    Thus $\det(X(\cP_I))=0$ contradicting that $\cP$ is super essential. This proves that $\Jac(\cO_{I,I'})\neq-\infty$ and
    \[o_{I',j}-\gamma_j\leq \Jac(\cO_{I,I'})+o_{I',j}-\gamma\leq J_I-\gamma.\]
\end{enumerate}
\end{proof}

\begin{lem}\label{lem-k+1}
Let $f\in\cD\{U^{\pm}\}$. If $k\in\frak{S}_j(f)$ but $k+1\notin\frak{S}_j(f)$ then $k+1\in \frak{S}_j(\partial f)$.
\end{lem}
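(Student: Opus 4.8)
The plan is to unpack the definitions of the differential support $\frak{S}_j$ and the derivation $\partial$, and then track what happens to a single monomial under differentiation. Recall that $f = \sum_{\iota} \theta_\iota \omega_\iota$ with $\theta_\iota \in \cD$ and $\omega_\iota$ a Laurent differential monomial, and that $k \in \frak{S}_j(f)$ means $\partial \omega_\iota / \partial u_{j,k} \neq 0$ for some $\iota$. The hypothesis $k \in \frak{S}_j(f)$ but $k+1 \notin \frak{S}_j(f)$ tells us two things: some monomial $\omega_\iota$ genuinely involves $u_{j,k}$ (i.e.\ the variable $u_{j,k}$ appears with nonzero exponent), and no monomial appearing in $f$ involves $u_{j,k+1}$.

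The key step is to understand how $\partial$ acts on a Laurent monomial. Writing a generic Laurent differential monomial as $\omega = \prod_{u \in \cU} u^{a_u}$ with $a_u \in \bbZ$, the Leibniz rule gives $\partial \omega = \omega \cdot \sum_{u \in \cU} a_u \frac{\partial u}{u}$, where $\partial u_{j,k} = u_{j,k+1}$. Hence if $u_{j,k}$ appears in $\omega$ with exponent $a \neq 0$, then $\partial \omega$ contains the term $a \, u_{j,k+1} u_{j,k}^{-1} \omega$ (together with terms coming from differentiating the other factors). This term genuinely involves $u_{j,k+1}$. The only way $u_{j,k+1}$ could fail to appear in $\partial f = \sum_\iota (\partial \theta_\iota \cdot \omega_\iota + \theta_\iota \cdot \partial \omega_\iota)$ is through cancellation among such contributions from different monomials $\omega_\iota$. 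So the real content of the lemma is a non-cancellation argument.

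First I would fix an index $\iota_0$ with $u_{j,k}$ appearing in $\omega_{\iota_0}$, say with exponent $a_{\iota_0} \neq 0$. The contribution of the $\iota_0$-term to $\partial f$ that involves $u_{j,k+1}$ is $a_{\iota_0} \theta_{\iota_0} \, u_{j,k+1} u_{j,k}^{-1} \omega_{\iota_0}$. I claim this cannot be cancelled. The point is that $\partial \theta_\iota \cdot \omega_\iota$ never involves $u_{j,k+1}$ (since $\omega_\iota$ doesn't, by hypothesis $k+1 \notin \frak{S}_j(f)$, and $\partial\theta_\iota \in \cD$), so only the $\theta_\iota \partial\omega_\iota$ pieces matter. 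Among these, a $u_{j,k+1}$-term can only arise by differentiating a factor $u_{j,k}$ inside some $\omega_\iota$; the resulting monomial is $u_{j,k+1} u_{j,k}^{-1}\omega_\iota$. Since the original monomials $\{\omega_\iota\}$ are distinct and none of them involves $u_{j,k+1}$, the multiplier $u_{j,k+1}u_{j,k}^{-1}$ is injective on this set, so distinct $\omega_\iota$ produce distinct $u_{j,k+1}$-monomials; and the coefficient of $u_{j,k+1}u_{j,k}^{-1}\omega_{\iota_0}$ is exactly $a_{\iota_0}\theta_{\iota_0} \neq 0$ (collecting like monomials $\omega_\iota = \omega_{\iota_0}$ only reinforces this, since the $\theta$'s add up to the nonzero coefficient that makes $k \in \frak{S}_j(f)$ in the first place). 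Hence $\partial(u_{j,k+1}u_{j,k}^{-1}\omega_{\iota_0})/\partial u_{j,k+1} \neq 0$, so $k+1 \in \frak{S}_j(\partial f)$.

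The main obstacle is being careful about the bookkeeping of cancellations, since $\cD$ is merely a differential domain and one should phrase everything in terms of which monomials in $\cD[\cU^\pm]$ carry a nonzero $\cD$-coefficient after collecting terms; once the observation that $u_{j,k+1}$-monomials in $\partial f$ come only from differentiating $u_{j,k}$-factors (using $k+1 \notin \frak{S}_j(f)$) is in place, the rest is the injectivity of multiplication by $u_{j,k+1}u_{j,k}^{-1}$ on Laurent monomials, which is immediate. I would keep the write-up short, emphasizing the Leibniz computation and the non-cancellation point, and not belabor the elementary algebra of monomial exponents.
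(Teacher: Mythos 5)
Your proof is correct and follows essentially the same route as the paper: both identify that, since no monomial of $f$ involves $u_{j,k+1}$, the only source of $u_{j,k+1}$ in $\partial f$ is the Leibniz term coming from differentiating a $u_{j,k}$-factor, and both then rule out cancellation because these contributions sit on distinct monomials (the paper organizes this by writing $f$ as a Laurent polynomial in $u_{j,k}$ and reading off the coefficient of $u_{j,k+1}$, while you argue monomial by monomial via injectivity of multiplication by $u_{j,k+1}u_{j,k}^{-1}$ — a purely organizational difference). Both write-ups tacitly use that the integer exponent $h$ does not kill the coefficient $hA_h$ (resp.\ $a_{\iota_0}\theta_{\iota_0}$), i.e.\ that $\cD$ has characteristic zero, so you are no worse off than the paper on that point.
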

\begin{proof}
Observe that $f=A_{-l} u_{j,k}^{-l}+\cdots +A_{-1} u_{j,k}^{-1}+A_0+A_1 u_{j,k}+\cdots +A_m u_{j,k}^m$ with $A_t\in\cD \{U^{\pm}\}$, $t=-l,\ldots ,-1,0,1,\ldots ,m$, such that $k,k+1\notin\frak{S}_j (A_t)$, $A_{-l}\neq 0$ or $A_m\neq 0$ and $l\geq 1$ or $m\geq 1$. The claim follows since
\[\partial f= \partial A_{-l}u_{j,k}^{-l}+ \cdots +\partial A_0+\cdots +\partial A_m u_{j,k}^m+ \left(\sum_{h\neq 0,h=-l}^m h A_h u_{j,k}^{h-1}\right) u_{j,k+1}.\]
\end{proof}

\begin{thm}\label{thm-columns}
If $\cP$ is super essential then
\[\cup_{f\in\ps(\cP)} \frak{S}_j (f)=[\gamma_j, M_j]\cap\bbN,\,\,\, j=1,\ldots ,n-1.\]
That is, $\ps(\cP)$ is a system of $L$ polynomials in $L-1$ algebraic indeterminates.
\end{thm}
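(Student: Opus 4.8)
The plan is to fix $j \in \{1,\ldots,n-1\}$ and prove the two inclusions of $\cup_{f\in\ps(\cP)} \frak{S}_j(f) = [\gamma_j, M_j]\cap\bbN$ separately. The inclusion $\subseteq$ is exactly \eqref{eq-sparseord}, which holds unconditionally, so the real content is the reverse inclusion $[\gamma_j, M_j]\cap\bbN \subseteq \cup_{f\in\ps(\cP)} \frak{S}_j(f)$. Recall $\gamma_j = \min\{\lord(f_i,u_j) \mid i \in \cI(j)\}$ and $M_j = m_j - \gamma$ with $m_j = \max\{o_{i,j} + J_i - \gamma \mid i = 1,\ldots,n\}$, while $\ps(\cP) = \cup_{i=1}^n f_i^{[J_i - \gamma]}$. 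So I must show: for every integer $k$ with $\gamma_j \le k \le M_j$, there is some $i$ and some $s \in [0, J_i - \gamma]\cap\bbN$ with $k \in \frak{S}_j(\partial^s f_i)$.

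The key tool is Lemma~\ref{lem-k+1}: if $k \in \frak{S}_j(f)$ and $k+1 \notin \frak{S}_j(f)$, then $k+1 \in \frak{S}_j(\partial f)$; more generally one checks (by induction, iterating the lemma) that once $k_0 \in \frak{S}_j(f)$, every $k \ge k_0$ lies in $\frak{S}_j(\partial^{k - k_0} f)$ — the support in $u_j$, once opened, never closes up again under differentiation, it only shifts its top endpoint up by one at each derivative. Hence for a fixed $f_i$ with $i \in \cI(j)$, writing $\ell_i := \lord(f_i,u_j)$ and $o_{i,j} := \ord(f_i,u_j)$, the union $\cup_{s=0}^{J_i-\gamma} \frak{S}_j(\partial^s f_i)$ contains the whole interval $[\ell_i,\, o_{i,j} + J_i - \gamma]\cap\bbN$: take the smallest element $\ell_i$ of $\frak{S}_j(f_i)$, propagate it upward by the iterated Lemma~\ref{lem-k+1} to fill everything up to $o_{i,j}$ using $f_i$ itself, and then keep differentiating to push the top up to $o_{i,j} + (J_i-\gamma)$. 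So each $f_i$, $i \in \cI(j)$, contributes the full interval $\bigl[\lord(f_i,u_j),\, o_{i,j} + J_i - \gamma\bigr]\cap\bbN$ to the left-hand side.

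It remains to see that these intervals, as $i$ ranges over $\cI(j)$, cover $[\gamma_j, M_j]\cap\bbN$. The left endpoint is handled by definition: $\gamma_j = \min_{i\in\cI(j)} \lord(f_i,u_j)$, so the interval attached to the minimizing index starts exactly at $\gamma_j$; and the right endpoint $M_j = \max_i (o_{i,j} + J_i - \gamma)$ is attained by the interval of the maximizing index. The gap to close is that the union of all these intervals is connected — i.e. no "hole" appears between the interval ending at some $o_{I,j} + J_I - \gamma$ and the next one starting at some $\lord(f_{I'},u_j)$. This is precisely where super essentiality (via Lemma~\ref{lem-I_Ip}) enters, and I expect it to be the main obstacle. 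Order $\cI(j) = \{i_1, i_2, \ldots\}$ by increasing $\lord(f_i,u_j)$; I want: for each consecutive pair with $\lord(f_{i_{t+1}},u_j)$ not already covered, part~(1) or part~(2) of Lemma~\ref{lem-I_Ip} supplies, for the index $I$ realizing the current top of the covered region, an index $I'$ with $o_{I',j} - \gamma_j \le J_I - \gamma$, equivalently $\lord(f_{I'},u_j) \le o_{I',j} \le J_I - \gamma + \gamma_j \le (o_{I,j} + J_I - \gamma) + 1$ once one also uses $o_{I,j} \ge \gamma_j - 1$ appropriately — so $f_{I'}$'s interval starts no later than one past the current top, and the union stays connected. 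Carefully bookkeeping which part of Lemma~\ref{lem-I_Ip} to invoke (part~(1) when $I$ itself still has unreached support below $\gamma_j$-shifted bounds, part~(2) with the Jacobi-number hypothesis $\Jac(\cO(\cP_{I,\overline I})^j) = -\infty$ detecting exactly the "hole" situation) is the delicate combinatorial step; the inequalities $o_{I,j} - \gamma_j \le J_{I'} - \gamma$ from the lemma are the engine that forbids a hole, and assembling them into a finite covering argument — probably an induction on the sorted list of left endpoints — finishes the proof.
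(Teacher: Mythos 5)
Your reduction of the theorem to \eqref{eq-sparseord} plus a covering argument is the right frame, and you correctly identify Lemma \ref{lem-I_Ip} as the engine; but the central intermediate claim on which your covering rests is false. You assert that for each $i\in\cI(j)$ the union $\cup_{s=0}^{J_i-\gamma}\frak{S}_j(\partial^s f_i)$ contains the whole interval $[\lord(f_i,u_j),\,o_{i,j}+J_i-\gamma]\cap\bbN$, on the grounds that support ``once opened, never closes up again under differentiation.'' Two things go wrong. First, the iterated form of Lemma \ref{lem-k+1} you invoke is not what the lemma gives: the lemma needs the hypothesis $k+1\notin\frak{S}_j(f)$, and without it cancellation can occur --- e.g.\ for $f=u_{j}-t\,u_{j,1}$ one has $\partial f=-t\,u_{j,2}$, so $0\in\frak{S}_j(f)$ but $1\notin\frak{S}_j(\partial f)$. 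Second, and more fatally, even the corrected iteration (always restarting from the largest support element $k'$ below the target $k$, so that $[k'+1,k]$ misses the support) only yields
\[
\bigcup_{s=0}^{J_i-\gamma}\frak{S}_j(\partial^s f_i)\;\supseteq\;\bigcup_{k'\in\frak{S}_j(f_i)}[k',\,k'+J_i-\gamma]\cap\bbN ,
\]
a union of short intervals anchored at the support points, not one long interval: a gap of width $d$ inside $\frak{S}_j(f_i)$ costs $d$ derivatives to fill, and $d$ need not be bounded by $J_i-\gamma$. For instance $f_i=u_{j}+u_{j,10}$ with $J_i-\gamma=2$ contributes only $\{0,1,2\}\cup\{10,11,12\}$, not $[0,12]$. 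So your list of intervals $[\lord(f_i,u_j),\,o_{i,j}+J_i-\gamma]$ is not what the polynomials actually contribute, and the subsequent connectivity argument is built on the wrong objects.

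This is exactly the difficulty the paper's proof is organized around, and it is why the indices there come in pairs linked by Jacobi bijections: the polynomial whose derivatives fill a gap and the polynomial whose order bounds the gap are in general \emph{different}. Concretely, the paper covers $[o_{\overline I,j},M_j]$ by derivatives of $f_{\overline I}$ (no gap problem there, since $o_{\overline I,j}=\max\frak{S}_j(f_{\overline I})$), and then covers $[\gamma_j,o_{I,j}]$ by derivatives of $f_{\underline I}$ (the index realizing $\gamma_j$), where super essentiality supplies the inequality $o_{I,j}-\gamma_j\le J_{\underline I}-\gamma$ guaranteeing that every gap in $\frak{S}_j(f_{\underline I})$ below $o_{I,j}$ is fillable within $f_{\underline I}$'s derivative budget; the residual interval $[o_{I,j}+1,o_{\overline I,j}-1]$ is then handled with Lemma \ref{lem-I_Ip}(2) by splitting it between intervals anchored at $o_{I,j}$ and $o_{I',j}$. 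Your proposal does not contain this cross-index mechanism, and your closing inequalities (e.g.\ deducing that $f_{I'}$'s interval ``starts no later than one past the current top'' from $o_{I',j}-\gamma_j\le J_I-\gamma$) conflate $\lord(f_{I'},u_j)$ with $o_{I',j}$ and do not repair the gap. As written, the proof does not go through.
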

\begin{proof}
Given $j\in\{1,\ldots ,n-1\}$, there exists $\overline{I}\in \{1,\ldots ,n\}$ such that
$m_j=o_{\overline{I},j}+J_{\overline{I}}$. Recall $M_j=m_j-\gamma$.
We can write
\[[\gamma_j, M_j]=[\gamma_j,o_{\overline{I},j}-1]\cup [o_{\overline{I},j},M_j].\]
\begin{enumerate}
\item For every $k\in [o_{\overline{I},j},M_j]\cap \bbN$,
$k-o_{\overline{I},j}\leq M_j-o_{\overline{I},j}=J_{\overline{I}}-\gamma.$
By Lemma \ref{lem-k+1}, $k\in\frak{S}_j (\partial^{k-o_{\overline{I},j}}f_{\overline{I}})$.

\item If $o_{\overline{I},j}=\gamma_j$ then the first interval is empty. If $o_{\overline{I},j}\neq \gamma_j$, there exists $\underline{I}\in\cI(j)$ such that $\ldeg(f_{\underline{I}},u_j)=\gamma_j$ and a bijection $\mu_{\underline{I}}:\{1,\ldots ,n\}\backslash\{\underline{I}\}\longrightarrow \{1,\ldots ,n-1\}$, with $I:=\mu_{\underline{I}}^{-1}(j)$ such that
\[o_{I,j}-\gamma_j\leq J_{\underline{I}}-\gamma=\sum_{l\in \{1,\ldots,n\}\backslash\{\underline{I}\}} o_{l,\mu_{\underline{I}}(l)}-\gamma.\]
If there exists $k\in ([\gamma_j,o_{I,j}]\cap\bbN)\backslash\frak{S}_j(f_{\underline{I}})$ then let us consider
\[k':=\max ([\gamma_j,k-1]\cap\bbN)\cap \frak{S}_j(f_{\underline{I}}).\]
Since
$k-k'\leq o_{I,j}-k'\leq o_{I,j}-\gamma_j\leq J_{\underline{I}}-\gamma$,
by Lemma \ref{lem-k+1}, $k\in\frak{S}_j(\partial^{k-k'}f_{\underline{I}})$.
Thus, for $\ps(f_{\underline{I}})$ as in \eqref{eq-ps}, it holds
\begin{equation}\label{eq-underI}
[\gamma_j,o_{I,j}]\cap\bbN\subseteq \cup_{f\in\ps(f_{\underline{I}})}\frak{S}_j(f).
\end{equation}
\begin{enumerate}
\item[2.1.] If $o_{I,j}\geq o_{\overline{I},j}-1$ then, by \eqref{eq-underI}
$[\gamma_j,o_{\overline{I},j}-1]\cap\bbN\subseteq \cup_{f\in\ps(f_{\underline{I}})}\frak{S}_j(f)$.
\item[2.2.] If $o_{I,j}< o_{\overline{I},j}-1$ then
$[\gamma_j,o_{\overline{I},j}-1]=[\gamma_j,o_{I,j}]\cup [o_{I,j}+1,o_{\overline{I},j}-1]$.
Consequently, if $o_{\overline{I},j}\leq o_{I,j}+J_I-\gamma$ then $[o_{I,j}+1,o_{\overline{I},j}-1] \subset [o_{I,j}+1,o_{I,j}+J_I-\gamma]$. If $o_{I,j}+J_I-\gamma<o_{\overline{I},j}$ then $\Jac(\cO(\cP_{I,\overline{I}})^j)=-\infty$ since otherwise
\[o_{\overline{I},j}\leq \gamma_j+o_{\overline{I},j}+\Jac(\cO(\cP_{I,\overline{I}})^j)-\gamma\leq o_{I,j}+J_I-\gamma.\]
By Lemma \ref{lem-I_Ip} (2), there exists $I'\in\cI(j)\backslash\{\overline{I}\}$ such that
$o_{\overline{I},j}-\gamma_j\leq J_{I'}-\gamma$ and, if $I\neq I'$ then $o_{I',j}-\gamma_j\leq J_I-\gamma$.
Note this implies
\[o_{\overline{I},j}\leq o_{I',j}+J_{I'}-\gamma\mbox{ and } o_{I',j}\leq o_{I,j}+J_I-\gamma.\]
If $o_{I',j}\leq o_{I,j}$ then
$[o_{I,j}+1,o_{\overline{I},j}-1]\subset [o_{I',j},o_{I',j}+J_{I'}-\gamma]$, otherwise $o_{I,j}< o_{I',j}$ and
\begin{align*}
[o_{I,j}+1,o_{\overline{I},j}-1]&=[o_{I,j}+1,o_{I',j}-1]\cup [o_{I',j},o_{\overline{I},j}-1]\\
&\subset [o_{I,j}+1,o_{I,j}+J_I-\gamma]\cup [o_{I',j},o_{I',j}+J_{I'}-\gamma].
\end{align*}
Thus given $k\in [o_{I,j}+1,o_{\overline{I},j}-1]\cap\bbN$,
if $k\in [o_{I,j}+1,o_{I,j}+J_I-\gamma]$ then $k-o_{I,j} \leq J_{I}-\gamma$ and, by Lemma \ref{lem-k+1}, $k\in\frak{S}_j(\partial^{k-o_{I,j}}f_{I})$. Analogously, if $k\in [o_{I',j},o_{I',j}+J_{I'}-\gamma]$ then $k\in\frak{S}_j(\partial^{k-o_{I',j}}f_{I'})$.
\end{enumerate}
\end{enumerate}
\end{proof}

\begin{ex}
Let $\cP$ be a system with $\gamma=0$,
\[\cO(\cP)=\left(\begin{array}{cc}2 & 0\\-\infty & 1\\ 2 & 0\end{array}\right),\mbox{ thus }
X(\cP)=\left(\begin{array}{cc}x_{1,1} & x_{1,2}\\0 & x_{2,3}\\ x_{3,1} & x_{3,2}\end{array}\right).\]
Then $J_1=3$, $J_2=2$ and $J_3=3$ and $\cP$ is super essential.
By Theorem \ref{thm-columns}, $\ps(\cP)$ is a system with $11$ polynomials in
$10$ algebraic variables $\cV=\{u_1,u_{1,1}\ldots ,u_{1,5}, u_2,u_{2,1},\ldots ,u_{2,3}\}$.

If we consider, for instance, the system $\ps$, with $L_1=2<J_1$, $L_2=J_2$ and $L_3=J_3$. We have $10$ polynomials in $10$ algebraic variables $\cV(\cP)$, in this case we cannot guarantee the elimination of the algebraic variables $\cV(\cP)$.
\end{ex}

\section{Sparse algebraic resultant associated to $\cP$}\label{sec-sparse algebraic resultant}

The result in Theorem \ref{thm-columns}, allows the construction of a Sylvester matrix associated to the system $\ps(\cP)$, choosing orderings on the sets $\cV(\cP)$ and $\ps(\cP)$, as it is next explained.

Through a bijection $\beta: \cV(\cP)\rightarrow \{1,\ldots ,L-1\}$ we establish an ordering of the set of variables $\cV(\cP)$.
Let $\cY=\{y_1,\ldots ,y_{L-1}\}$ be a set of $L-1$ algebraic indeterminates over $\bbQ$.
A natural bijection $\upsilon:\cY\rightarrow \cV(\cP)$ is defined by $\upsilon(y_l)=\beta^{-1}(l)$.
Given the Laurent polynomial ring $\cD[\cY^{\pm}]$, $\upsilon$
extends to a ring isomorphism
\[\upsilon:\cD [\cY^{\pm}]\rightarrow \cD[\cV(\cP)^{\pm}].\]
Monomials in $\cD[\cY^{\pm}]$ are $y^{\alpha}=y_1^{\alpha_1}\cdots y_{L-1}^{\alpha_{L-1}}$, with $\alpha=(\alpha_1,\ldots ,\alpha_{L-1})\in\bbZ^{L-1}$ and
$\upsilon(y^{\alpha})=\upsilon(y_1)^{\alpha_1}\cdots \upsilon(y_{L-1})^{\alpha_{L-1}}$.
Now given $f=\sum_{\alpha\in\bbZ^{L-1}}a_{\alpha}\upsilon(y^{\alpha})$ in $\cD[\cV(\cP)^{\pm}]$, we define the {\sf algebraic support} $\cA(f)$ of $f$ as
\[\cA(f):=\left\{\alpha\in\bbZ^{L-1}\mid a_{\alpha}\neq 0\right\}.\]

\para
A bijection $\lambda:\ps(\cP)\rightarrow \{1,\ldots ,L\}$ defines an ordering in the set $\ps(\cP)$. Let us call its inverse $\rho$.
We define the {\sf algebraic generic system associated to} $\cP$ as
\[\ags(\cP):=\left\{\sum_{\alpha\in\cA(f)} c_{\alpha}^{\lambda(f)} y^{\alpha}\mid f\in\ps(\cP) \right\},\]
where $c_{\alpha}^{\lambda(f)}$ are algebraic indeterminates over $\bbQ$.
Thus we have
\[\ags(\cP)=\left\{P_l:=\sum_{\alpha\in\cA(\rho(l))} c_{\alpha}^l y^{\alpha}\mid l=1,\ldots ,L\right\}.\]

\para
Given $l\in\{1,\ldots ,L\}$, let us consider sets of algebraic indeterminates over $\bbQ$
\[\cC_l:=\{c_{\alpha}^l\mid \alpha\in\cA(\rho(l))\}\mbox{ and }\cC:=\cup_{l=1}^L \cC_l.\]
The system of algebraic generic polynomials $\ags(\cP)$ is included in $\bbE [\cY^{\pm}]$, for $\bbE:=\bbQ(\cC)$.
Given a subsystem $\cS\subseteq\ags(\cP)$, its elements are polynomials $P=\sum_{t} a_t M_{P,t}$, with $M_{P,t}$ monomials in $\bbE[\cY^{\pm}]$. We denote by
\begin{equation}\label{eq-YcS}
\cY(\cS):=\{y\in \cY\mid \partial M_{P,t}/\partial y\neq 0 \mbox{ for some monomial of } P\in \cS\}.
\end{equation}
If $\cP$ is super essential, Theorem \ref{thm-columns} implies that $\cY(\ags(\cP))=\cY$, so $\ags(\cP)$ is a set of $L$ polynomials in $L-1$ indeterminates $\cY$.

\para

A Sylvester matrix $Syl(\ags(\cP))$ for $\ags(\cP)$ can be constructed as in \cite{CE} ( see also \cite{CP}, \cite{St} and \cite{DA}), where finite sets of monomials $\Lambda_1,\ldots ,\Lambda_L,\Lambda$ in $\bbE[\cY^{\pm}]$ are determined. Let $\langle \Lambda \rangle_{\bbE}$ denote the $\bbE$-vector space generated by $\Lambda$.
The matrix in the monomial bases of the linear map
\[\langle \Lambda_1 \rangle_{\bbE} \oplus \cdots \oplus \langle \Lambda_L \rangle_{\bbE}\rightarrow \langle \Lambda \rangle_{\bbE}:(g_1,\ldots ,g_L)\mapsto \sum g_l P_l,\]
is $Syl(\ags(\cP))$.
In \cite{CE} and  \cite{St}, it is assumed without loss of generality that
\begin{itemize}
\item[(A1)] the affine lattice generated by the Minkowski sum $\sum_{f\in\ps(\cP)} \cA(f)$ has dimension $L-1$.
\end{itemize}
This technical hypothesis is removed in \cite{CP}, thus a Sylvester matrix $Syl(\ags(\cP))$ for $\ags(\cP)$ can be constructed without any additional assumption.

\para
Let $(\ags(\cP))$ be the algebraic ideal generated by $\ags(\cP)$ in $\bbQ[\cC][\cY^{\pm}]$.
No reference was found for the next result, which is proved for the sake of completeness, although it seems natural that it should exist in the sparse algebraic resultant literature.

\begin{prop}\label{prop-D}
$\det(Syl(\ags(\cP)))\in (\ags(\cP)) \cap\bbQ[\cC]$.
\end{prop}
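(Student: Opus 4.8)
The plan is to show the two memberships separately, since $\det(Syl(\ags(\cP)))\in\bbQ[\cC]$ is immediate from the construction: the entries of $Syl(\ags(\cP))$ are the coefficients $c_\alpha^l$ of the generic polynomials $P_l$ (or zero), so the determinant is a polynomial in $\bbQ[\cC]$. The real content is that this determinant lies in the ideal $(\ags(\cP))$ of $\bbQ[\cC][\cY^{\pm}]$.

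For that, I would use the standard Sylvester/resultant-matrix argument. Recall that $Syl(\ags(\cP))$ is the matrix of the $\bbE$-linear map $(g_1,\ldots,g_L)\mapsto\sum_l g_lP_l$ from $\bigoplus_l\langle\Lambda_l\rangle_\bbE$ to $\langle\Lambda\rangle_\bbE$ in the monomial bases $\Lambda_1,\ldots,\Lambda_L$ and $\Lambda$. Enumerate $\Lambda=\{\mu_1,\ldots,\mu_N\}$ (so $N=|\Lambda|$ and the matrix is $N\times N$). Consider the $N\times N$ matrix $M$ obtained from $Syl(\ags(\cP))$ by replacing its last column by the column vector whose $k$th entry is the monomial $\mu_k\in\bbQ[\cC][\cY^{\pm}]$ itself. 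Expanding $\det(M)$ along that last column gives $\det(M)=\sum_k (\pm)\,\Delta_k\,\mu_k$, where $\Delta_k$ is the appropriate cofactor; in particular, up to sign, $\det(M)$ equals $\det(Syl(\ags(\cP)))\cdot\mu_N$ plus a combination of the $\mu_k$ with cofactor coefficients — more precisely, by the cofactor expansion, $\det(M)$ is exactly $\sum_{k}(-1)^{k+N}\Delta_k\mu_k$ and the $(N,k)$-cofactor $\Delta_N$ of $M$ equals $\det(Syl(\ags(\cP)))$. On the other hand, each row of $Syl(\ags(\cP))$ records how a product $\eta\,P_l$ (for a monomial $\eta$ in the $l$th block) expands in the basis $\Lambda$; hence the scalar product of the $k$th row of $Syl(\ags(\cP))$ with the column vector $(\mu_1,\ldots,\mu_N)^{t}$ is precisely that element $\eta P_l\in(\ags(\cP))$. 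This means that adding to the last column of $M$ a suitable $\bbQ[\cC][\cY^{\pm}]$-linear combination of the first $N-1$ columns turns the last column into the vector $(\eta_1P_{l_1},\ldots,\eta_NP_{l_N})^t$ without changing $\det(M)$; expanding along this new last column exhibits $\det(M)$ as a $\bbQ[\cC][\cY^{\pm}]$-combination of the $\eta_kP_{l_k}$, hence $\det(M)\in(\ags(\cP))$.

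It then remains to extract $\det(Syl(\ags(\cP)))$ from $\det(M)$. Comparing the two cofactor expansions, $\det(M)=\sum_k(-1)^{k+N}\Delta_k\mu_k$ with $\Delta_N=\det(Syl(\ags(\cP)))$, and $\det(M)\in(\ags(\cP))$. Choosing the ordering of $\Lambda$ so that $\mu_N=1$ — which is legitimate since one may always arrange $1\in\Lambda$, or more simply since we only need one monomial in $\Lambda$ and any monomial $\mu_N$ can be cleared by multiplying through (the ideal $(\ags(\cP))$ is an ideal in the Laurent ring $\bbQ[\cC][\cY^{\pm}]$, so it is stable under multiplication by $\mu_N^{-1}$) — we get $\det(Syl(\ags(\cP)))\cdot\mu_N\equiv 0\pmod{(\ags(\cP))}$ after killing the other terms. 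Actually the clean way: work in $\bbQ[\cC][\cY^{\pm}]$, multiply the last-column-replacement vector by $\mu_N^{-1}$ so the $(N,N)$ entry is $1$; then $\det(M')=\det(Syl(\ags(\cP)))+\sum_{k<N}(\pm)\Delta_k\mu_k\mu_N^{-1}$, and the two facts $\det(M')\in(\ags(\cP))$ and $\mu_k\mu_N^{-1}\in(\ags(\cP))$ for $k<N$ — the latter because each $\mu_k$, $k<N$, can itself be obtained by the same cofactor trick applied to a different target column — force $\det(Syl(\ags(\cP)))\in(\ags(\cP))$.

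The step I expect to be the main obstacle is making the last extraction fully rigorous: showing that, after isolating $\det(Syl(\ags(\cP)))$, the remaining sum $\sum_{k<N}(\pm)\Delta_k\mu_k$ genuinely lies in $(\ags(\cP))$ and can be subtracted off. The cleanest route is to observe that the whole vector $(\mu_1,\ldots,\mu_N)^t$, when multiplied by $Syl(\ags(\cP))$, lands componentwise in $(\ags(\cP))\cdot\bbQ[\cC][\cY^{\pm}]$; hence, denoting by $A=Syl(\ags(\cP))$ and $\mathrm{adj}(A)$ its adjugate, we have $\det(A)\cdot(\mu_1,\ldots,\mu_N)^t=\mathrm{adj}(A)\cdot\big(A\cdot(\mu_1,\ldots,\mu_N)^t\big)$, and every entry of the right-hand side is a $\bbQ[\cC][\cY^{\pm}]$-combination of elements of $(\ags(\cP))$, hence in $(\ags(\cP))$. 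Therefore $\det(A)\mu_k\in(\ags(\cP))$ for each $k$; picking any $k$ with $\mu_k$ a unit in $\bbQ[\cC][\cY^{\pm}]$ (every Laurent monomial is a unit, so any $k$ works) yields $\det(A)\in(\ags(\cP))$, completing the proof. This adjugate formulation sidesteps the bookkeeping of cofactor signs entirely and is the version I would write up.
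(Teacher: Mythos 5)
Your proposal is correct, and the adjugate formulation in your final paragraph is the version that actually carries the proof; the column-replacement bookkeeping in your first two paragraphs (and the aside about arranging $\mu_N=1$) can be discarded, as you yourself suggest. Your route is genuinely different from the paper's in its derivation, though it lands on the same key identity. The paper fixes one monomial $y^{\alpha}\in\Lambda$, produces a nonzero kernel vector $g$ of the column-deleted Sylvester matrix, assumes w.l.o.g.\ $g_{1,1}\neq 0$, row-reduces via a matrix $E$ with $\det(E)=g_{1,1}$, and compares the resulting expression with the cofactor expansion of $D=\det(S)$ along the $y^{\alpha}$-column to conclude $y^{\alpha}D=\sum_{l,h}r_{l,h}\,y^{\sigma_{l,h}}P_l$ with $r_{l,h}$ the cofactors, whence $D=y^{-\alpha}\sum_{l,h}r_{l,h}\,y^{\sigma_{l,h}}P_l\in(\ags(\cP))$. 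Your identity $\det(S)\,\mu_k=\sum_j \mathrm{adj}(S)_{kj}\,\eta_j P_{l_j}$ is exactly this cofactor relation, but obtained in one step from $\mathrm{adj}(S)\,S=\det(S)\,I$ over the commutative ring $\bbQ[\cC]$, simultaneously for every column, with no kernel vector, no choice of a nonzero coordinate, and no cancellation of $g_{1,1}$ in the domain $\bbQ[\cC][\cY^{\pm}]$. Both arguments then finish identically by noting that a Laurent monomial is a unit in $\bbQ[\cC][\cY^{\pm}]$, so $\det(S)\mu_k\in(\ags(\cP))$ forces $\det(S)\in(\ags(\cP))$; and in both cases $\det(S)\in\bbQ[\cC]$ is immediate since the matrix entries are. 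What your approach buys is brevity and robustness (it is the textbook ``resultant lies in the ideal'' argument and works verbatim over any coefficient ring); what the paper's buys is essentially nothing extra here, so yours is a legitimate simplification.
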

\begin{proof}
Let us denote $D=\det(Syl(\ags(\cP)))$ and $S=Syl(\ags(\cP))$.
Assume $\Lambda_l=\{y^{\sigma_{l,h}}\mid h=1,\ldots ,\tau_l\}$, $l=1,\ldots ,L$.
Let us choose $y^{\alpha}\in\Lambda$ and define $C_{\alpha-\sigma_{l,h}}^l$ equal to $c_{\alpha-\sigma_{l,h}}^l$ if $\alpha-\sigma_{l,h}\in\cA(\rho(l))$ and zero otherwise. Let us define the linear map
\begin{align*}
\Psi:\bbE^{\tau_1} \oplus \cdots \oplus \bbE^{\tau_L} &\rightarrow \langle \Lambda\backslash\{y^{\alpha}\} \rangle_{\bbE}\\
g=(g_{1,1},\ldots ,g_{1,\tau_1},\ldots ,g_{L,1},\ldots ,g_{L,\tau_L})&\mapsto \sum_{l=1}^{L}\sum_{h=1}^{\tau_l} g_{l,h} (y^{\sigma_{l,h}} P_l-C_{\alpha-\sigma_{l,h}}^l y^{\alpha}).
\end{align*}
The columns of $S$ are indexed by the elements of $\Lambda$.
The matrix $M(\Psi)$ of $\Psi$, in the monomial bases, is the submatrix of $S$ obtained by removing the column indexed by $y^{\alpha}$. Observe that $S$ and $M(\Psi)$ are matrices with elements in $\bbQ[\cC]$.

There exists a nonzero $g\in {\rm Ker}(\Psi)\cap \bbQ[\cC]^{\sum_{l=1}^L \tau_L}$.
We can assume w.l.o.g. that $g_{1,1}\neq 0$
There exists a nonsingular matrix $E$ such that $\det(E)=g_{1,1}$ and the first row of $E\cdot S$ has all its entries equal to zero, except for the entry in the column indexed by $y^{\alpha}$, which equals $\sum_{l=1}^{L}\sum_{h=1}^{\tau_l} g_{l,h} C_{\alpha-\sigma_{l,h}}^l$. Thus
\[g_{1,1} D=\det(E\cdot S)=\gamma\sum_{l=1}^{L}\sum_{h=1}^{\tau_l} g_{l,h} C_{\alpha-\sigma_{l,h}}^l,\]
for some $\gamma\in\bbQ[\cC]$. If we develop $D$ by the column of $S$ indexed by $y^{\alpha}$ we obtain
\[D=\sum_{l=1}^{L}\sum_{h=1}^{\tau_l} r_{l,h} C_{\alpha-\sigma_{l,h}}^l,\mbox{ with } r_{l,h}\in\bbQ[\cC],\]
which implies $g_{1,1} r_{l,h}=\gamma g_{l,h}$, $l=1,\ldots ,L$, $h=1,\ldots ,\tau_l$.
This proves
\[g_{1,1}\sum_{l=1}^{L}\sum_{h=1}^{\tau_l} r_{l,h} (y^{\sigma_{l,h}} P_l-C_{\alpha-\sigma_{l,h}}^l y^{\alpha}) =
\gamma \sum_{l=1}^{L}\sum_{h=1}^{\tau_l} g_{l,h} (y^{\sigma_{l,h}} P_l-C_{\alpha-\sigma_{l,h}}^l y^{\alpha})=0,\]
and $g_{1,1}\neq 0$ in the domain $\bbQ[\cC][\cY^{\pm}]$ implies
\[\sum_{l=1}^{L}\sum_{h=1}^{\tau_l} r_{l,h} (y^{\sigma_{l,h}} P_l-C_{\alpha-\sigma_{l,h}}^l y^{\alpha}) =0.\]
Furthermore
\[\sum_{l=1}^{L}\sum_{h=1}^{\tau_l} r_{l,h} y^{\sigma_{l,h}} P_l=y^{\alpha} \sum_{l=1}^{L}\sum_{h=1}^{\tau_l} r_{l,h} C_{\alpha-\sigma_{l,h}}^l= y^{\alpha} D.\]
Since $D\in\bbQ[\cC]$, we have
\[D=y^{-\alpha}\sum_{l=1}^{L}\sum_{h=1}^{\tau_l} r_{l,h} y^{\sigma_{l,h}} P_l\in (\ags(\cP))\cap\bbQ[\cC].\]
\end{proof}

\para

By \cite{Kol}, Chapter $0$, $\S 11$, an ideal $\cI$ in a polynomial algebra $\bbQ[\cC]$ is prime if and only if it has a generic zero $\epsilon$ in $\bbE^{|\cC|}$, for a natural field extension $\bbE$ of $\bbQ$. That is, a polynomial in $\bbQ[\cC]$ belongs to $\cI$ if and only if it vanishes at the {\sf generic zero} $\epsilon$. In the next proof, concepts as autoreduced set and pseudo remainder will be used in the algebraic case, we refer to \cite{Mishra}.

\para

Given $l\in\{1,\ldots ,L\}$, let us suppose that $\cC_l=\{c_l,c_{l,h}\mid h=1,\ldots ,h_l\}$ and $\{T_l,T_{l,h}\mid h=1,\ldots ,h_l\}=\{y^{\alpha}\mid \alpha\in \cA(\rho(l))\}$, then
\begin{equation}\label{eq-Pl}
P_l= c_l T_l +\sum_{h=1}^{h_l} c_{l,h} T_{l,h},\mbox{ with }h_l:=|\cA(\rho(l))|-1.
\end{equation}
Let $\overline{\cC}:=\cC\backslash\{c_1,\ldots ,c_L\}$ and define,
\begin{equation}\label{eq-gazero}
\epsilon_l:=-\sum_{h=1}^{h_l} c_{l,h} \frac{T_{l,h}}{T_l}\,\,\mbox{ and }\,\,\epsilon:=(\overline{\cC};\epsilon_1,\ldots ,\epsilon_L).
\end{equation}

\begin{lem}\label{lem-prime1}
The elimination ideal $(\ags(\cP))\cap\bbQ[\cC]$ is a prime ideal with $\epsilon$ as a generic zero.
\end{lem}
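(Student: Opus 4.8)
The plan is to verify that $\epsilon=(\overline{\cC};\epsilon_1,\ldots,\epsilon_L)\in\bbE^{|\cC|}$ (where now $\bbE=\bbQ(\cC)$, or rather a suitable rational function field) is a generic zero of $(\ags(\cP))\cap\bbQ[\cC]$, and then invoke the cited characterization from \cite{Kol}, Chapter $0$, $\S11$: an ideal of $\bbQ[\cC]$ is prime precisely when it admits a generic zero. Concretely, I would show two things. First, every $P_l$ vanishes at $\epsilon$: substituting $c_l\mapsto c_l$ (unchanged), $c_{l,h}\mapsto c_{l,h}$ for the coefficients in $\overline{\cC}$, and evaluating $P_l$ with $y^{\alpha}$ left as indeterminates is not quite what we want — instead note that $\epsilon$ is a point in coefficient space only, so "vanishing" must be interpreted correctly. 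The right reading (matching the use of $\epsilon_l$ as a substitution for $c_l$ in \eqref{eq-Pl}) is that $\epsilon$ assigns to the variable $c_l$ the value $\epsilon_l=-\sum_h c_{l,h}T_{l,h}/T_l$ and fixes the variables in $\overline{\cC}$; under this substitution $P_l$ becomes $\epsilon_l T_l+\sum_h c_{l,h}T_{l,h}=0$ in $\bbQ(\overline{\cC})[\cY^{\pm}]$. Hence every element of $(\ags(\cP))$, viewed in $\bbQ[\cC][\cY^{\pm}]$, maps to $0$ under the ring homomorphism $\bbQ[\cC]\to\bbQ(\overline{\cC})$ sending $c_l\mapsto\epsilon_l$; in particular every element of $(\ags(\cP))\cap\bbQ[\cC]$ vanishes at $\epsilon$.

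Second, and this is the substantive direction, I must show the converse: if $Q\in\bbQ[\cC]$ vanishes at $\epsilon$, i.e. $Q(\overline{\cC};\epsilon_1,\ldots,\epsilon_L)=0$ in $\bbQ(\overline{\cC})$, then $Q\in(\ags(\cP))$. The idea is pseudo-division: treat $\cB=\{P_1,\ldots,P_L\}$ as an autoreduced set in $\bbQ[\cC][\cY^{\pm}]$ (or work in $\bbQ[\overline{\cC}][T_1^{\pm},\ldots][\,c_1,\ldots,c_L]$) with the $c_l$ as leading variables — $P_l$ is linear in $c_l$ with initial $T_l$. Compute the pseudo-remainder $R:=\prem(Q,\cB)$ with respect to the variables $c_1,\ldots,c_L$; then there are nonnegative integers $e_l$ and an identity $\big(\prod_l T_l^{e_l}\big)\,Q=R+\sum_l B_l P_l$ with $B_l\in\bbQ[\cC][\cY^{\pm}]$ and $R\in\bbQ[\overline{\cC}][\cY^{\pm}]$ free of all the $c_l$. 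Evaluating this identity at $c_l\mapsto\epsilon_l$ kills the sum and the left-hand side, so $R=0$ in $\bbQ(\overline{\cC})[\cY^{\pm}]$, hence $R=0$ identically. Therefore $\big(\prod_l T_l^{e_l}\big)Q\in(\ags(\cP))$, and since $\prod_l T_l^{e_l}$ is a unit in $\bbQ[\cC][\cY^{\pm}]$ (it is a Laurent monomial), we get $Q\in(\ags(\cP))$, and as $Q\in\bbQ[\cC]$ we conclude $Q\in(\ags(\cP))\cap\bbQ[\cC]$. This establishes that $(\ags(\cP))\cap\bbQ[\cC]$ is exactly the ideal of polynomials vanishing at $\epsilon$, which is the definition of $\epsilon$ being a generic zero and hence forces primeness.

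The main obstacle, and the place I would be most careful, is the bookkeeping of the pseudo-division step in the Laurent setting: making sure the "initials" one clears are precisely the monomials $T_l$ (so that they are units in $\bbQ[\cC][\cY^{\pm}]$, which is what lets us divide them back out — a feature one does not have in an ordinary polynomial ring), and checking that the $c_l$ are genuinely independent leading variables so that a clean autoreduced-set / pseudo-remainder formalism applies. One should also confirm that $\bbQ(\overline{\cC})(\epsilon_1,\ldots,\epsilon_L)=\bbE$, i.e. that adjoining the $\epsilon_l$ to $\bbQ(\overline{\cC})$ inside $\bbQ(\overline{\cC})(\cY)$ gives a field isomorphic to the residue field needed for the generic-zero statement — but this is automatic since the $\epsilon_l$ lie in $\bbQ(\overline{\cC})(\cY)$ and the assignment $c_l\mapsto\epsilon_l$ together with the identity on $\overline{\cC}$ defines a $\bbQ$-algebra homomorphism $\bbQ[\cC]\to\bbQ(\overline{\cC})(\cY)$ whose kernel we have just identified. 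With these points handled, the statement follows from the quoted result in \cite{Kol}.
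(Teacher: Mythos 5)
Your proposal is correct and follows essentially the same route as the paper: both directions are handled exactly as in the paper's proof, with the converse established by pseudo-division with respect to the $c_l$ as leaders of the (linear in $c_l$) polynomials $P_l$, using that the cleared initials are Laurent monomials in $\cY$ and hence units in $\bbQ[\cC][\cY^{\pm}]$. The only cosmetic difference is that the paper first multiplies each $P_l$ by a monomial $N_l$ to land in $\bbQ[\cC][\cY]$ before pseudo-dividing, whereas you work directly in the Laurent ring with initials $T_l$; the substance is the same.
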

\begin{proof}
We only need to prove that $\epsilon$ is a generic zero of $\cI=(\ags(\cP))\cap\bbQ[\cC]$.
Given $G\in \cI$, $G=\sum_{l}\alpha_l P_l$, with $\alpha_l\in \bbQ[\cC][\cY^{\pm}]$. Since $P_l(\epsilon)=\epsilon_l T_l+\sum_{h=1}^{h_l} c_{l,h} T_{l,h}=0$ we have $G(\epsilon)=0$. Conversely, let $G\in\bbQ[\cC]$ with $G(\epsilon)=0$. For each $l\in\{1,\ldots ,L\}$, there exists a monomial $N_l$ in the variables $\cY$ such that $N_lP_l\in \bbQ[\cC][\cY]$. Furthermore, $\cA=\{N_1P_1,\ldots ,N_LP_L\}$ is an autoreduced set with $c_l$ as leaders. Let $G_0$ be the pseudo remainder of $G$ w.r.t. $\cA$, that is $M G=\sum_{l}\beta_l N_l P_l+G_0$, for some monomial $M$ in $\cY$. Observe that $G_0\in\bbQ[\overline{\cC}][\cY]$ because each $N_lP_l$ is linear in $c_l$. Hence
\[G_0=G_0(\epsilon)=M G(\epsilon)-\sum_{l}\beta_l(\epsilon) N_l P_l(\epsilon)=0\]
and
\[G=\sum_{l}\beta_l \frac{N_l}{M} P_l=\sum_l \gamma_l P_l,\,\,\, \gamma_l\in \bbQ[\cC][\cY^{\pm}].\]
Thus $G\in \cI$ and the result is proved.
\end{proof}

Let us denote the system of generic algebraic polynomials $\ags(\cP)$ by $\cS$.
The dimension of $(\cS)\cap\bbQ[\cC]$ is by definition the transcendence degree of $\bbQ(\epsilon)$ over $\bbQ$ (\cite{Kol}, Chapter $0$, $\S 11$), let us denote it by $\trdeg(\bbQ(\epsilon)/\bbQ)$.

\begin{rem}\label{rem-R}
If $\trdeg(\bbQ(\epsilon)/\bbQ)=L-1$ then  $(\cS)\cap\bbQ[\cC]$ is a prime ideal of codimension one, which implies it is a principal ideal. Namely, there exists an irreducible polynomial denoted by ${\sf R}(\cS)$ in $\bbZ[\cC]$ such that $(\cS)\cap\bbQ[\cC]=({\sf R} (\cS))$.
If $\trdeg(\bbQ(\epsilon)/\bbQ)<L-1$ we define ${\sf R}(\cS)$ to be equal to $1$.
\end{rem}

\para

A vector of coefficients for the system $\cS=\{P_1,\ldots ,P_L\}$ defines a point ${\bf c}$ of the product of complex projective spaces ${\bf P}^{h_1}\times\cdots \times {\bf P}^{h_L}$, namely
\[{\bf c}=({\bf c}_1, {\bf c}_{1,1},\ldots ,{\bf c}_{1,h_1},\ldots ,{\bf c}_L, {\bf c}_{L,1},\ldots ,{\bf c}_{L,h_L}).\]
Let us denote $P_l^{\bf c}:={\bf c}_l T_l +\sum_{h=1}^{h_l} {\bf c}_{l,h} T_{l,h}$, $\bbC^*:=\bbC\backslash\{0\}$  and define
\[Z_0:=\{{\bf c}\in {\bf P}^{h_1}\times\cdots \times {\bf P}^{h_L}\mid P_l^{\bf c}=0, l=1,\ldots ,L \mbox{ have a common solution in }(\bbC^*)^{L-1} \}.\]
By \cite{PS}, the Zariski closure $Z$ of $Z_0$ in ${\bf P}^{h_1}\times\cdots \times {\bf P}^{h_L}$ is an irreducible variety.
As defined in \cite{PS}, if the codimension of $Z$ is one then the {\sf sparse resultant} $\res(\cS)$ of the system $\cS=\ags(\cP)$ is the irreducible polynomial in $\bbZ[\cC]$ defining the hypersurface $Z$. If the codimension of $Z$ is greater than one then $\res(\cS)$ is defined to be the constant $1$.
Observe that $(\cS)\cap \bbQ[\cC]$ is included in the ideal of the variety $Z$, thus if $Z$ has codimension one then
\begin{equation}\label{eq-RSresS}
(\res(\cS))=(\cS)\cap \bbQ[\cC]=({\sf R}(\cS)).
\end{equation}
It is proved in \cite{CE}, Section 6 (see also \cite{St}) that $\det(Syl(\ags(\cP)))$ is a nonzero multiple of (the nontrivial) $\res(\cS)$, thus  $\det(Syl(\ags(\cP)))\in (\cS)\cap \bbQ[\cC]$. Note that the proof of Proposition \ref{prop-D} is needed only in the case $\res(\cS)=1$.

\para

Given $J\subseteq \{1,\ldots ,L\}$, let us consider the affine lattice $\cL_J$ generated by $\sum_{l\in J} \cA(\rho(l))$,
\[\cL_J=\left\{\sum_{l\in J} \lambda_l \alpha_l\mid \alpha_l\in\cA(\rho(l)), \lambda_l\in\bbZ, \sum_{l\in J}\lambda_l=1 \right\},\]
with $\cL:=\cL_{\{1,\ldots ,L\}}$.
Let $\rank(\cL_J)$ denote the rank of $\cL_J$. In \cite{St}, the system $\cS_J=\{P_l\mid l\in J\}$
is said to be {\sf algebraically essential} if $\rank(\cL_J)=|J|-1$ and $\rank(\cL_{J'})\geq |J'|$, for each proper subset $J'$ of $J$. The condition,
\begin{itemize}
\item[(A2)] there exists a unique algebraically essential subsystem $\cS_I$ of $\cS$.
\end{itemize}
is proved, in \cite{PS} and \cite{St}, to be a necessary and sufficient for $Z$ to have codimension one (see also \cite{GKZ}). In such case, $\res(\cS)$ coincides with $\res(\cS_I)$, considered w.r.t. the lattice $\cL_I$, and hence
\begin{equation}\label{eq-SI}
(\cS_I)\cap \bbQ[\cC_I]=({\sf R}(\cS_I))=(\res(\cS_I)),
\end{equation}
with $\cC_I=\cup_{l\in I}\cC_l$ and $(\cS_I)$ the ideal generated by $\cS_I$ in $\bbQ[\cC_I][\cY(\cS_I)^{\pm}]$, with $\cY(\cS_I)$ as in \eqref{eq-YcS}.

\para

In \cite{PS}, if $\cS$ is essential, the degree of $\res(\cS)$ in $\cC_l$, $l=1,\ldots ,L$ was proved to equal the normalized mixed volume
\begin{equation}\label{eq-MV}
MV_{-l}(\cS):=\cM(\cQ_h\mid h\in \{1,\ldots ,L\}\backslash \{l\})=\frac{\sum_{J\subset \{1,\ldots ,L\}\backslash \{l\}} (-1)^{L-|J|} \vol \left(\sum_{j\in J} \cQ_j\right)}{vol(\cQ)},
\end{equation}
where $\cQ_l$ is the convex hull of $\cA(\rho(l))$ in $\cL\otimes\bbR$, $\vol(\cQ_l)$ its $L-1$ dimensional volume, $\sum_{j\in J} \cQ_j$ is the Minkowski sum of $\cQ_j$, $j\in J$ and $\cQ$ a fundamental lattice parallelotope in $\cL$.

\para
The Sylvester matrix $Syl(\ags(\cP))$ constructed in \cite{CE} and \cite{DA} assigns a special role to $P_1$, let us denote $S_1(\cP):=Syl(\ags(\cP))$. The same construction can be done choosing $P_l$, $l=2,\ldots ,L$ as a distinguished polynomial, obtaining a matrix denoted by $S_l(\cP)$. As noted in \cite{CE}, Section 9 and \cite{DA}, Section 4.3, $S_l(\cP)$ has the minimum number of rows containing coefficients of $P_l$, its degree in the coefficients of $P_l$ coincides with the degree of $\res(\cS)$ in the coefficients of $P_l$. Furthermore, $\res(\cS)$ can be computed as the $\gcd$ in $\bbQ[\cC]$ of the determinants \begin{equation}\label{eq-Dl}
D_l(\cP):=\det(S_l(\cP)),\,\,\, l=1,\ldots ,L.
\end{equation}

\begin{ex}\label{ex-linear}
The next system $\cP=\{f_1,f_2\}$ in $\cD\{u_1\}$, is a simplified version of a predator-prey model studied in \cite{CS} that we take as a toy example,
\[
\begin{array}{l}
f_1=
a_2 x+(a_1+a_4 x) u_1+ u_{1,1} + (a_3 +a_6 x) u_1^2 + a_5 u_1^3,\\
f_2=
x'+( b_1 + b_3 x) u_1 +(b_2 +b_5 x) u_1^2 +b_4 u_1^3,
\end{array}
\]
with $a_i,b_j$ algebraic indeterminates over $\bbQ$, $\cD=\bbQ(t)[a_i,b_j]\{x\}$ and $\partial=\frac{\partial}{\partial t}$. The first attempt to eliminate the differential variable $u_1$ was done using the Maple package {\rm diffalg}, \cite{BH} (using characteristic set methods). The computation was interrupted with no answer after two hours.
We carry the example to show the elimination of $u_1$. Computations were done with Maple 15.

Since $\ps(\cP)=\{f_1,f_2,\partial f_2\}$, with
$\partial f_2=
x''+b_3 x' u_1+ (b_3 x+b_1) u_{1,1}+b_5 x' u_1^2 +(2 b_5 x +2 b_2) u_1 u_{1,1}+3 b_4 u_1^2 u_{1,1}$
and $\cV(\cP)=\{u_1,u_{1,1}\}$, we have the following associated system of algebraic generic polynomials in $y_1,y_2$
\[\ags(\cP)=\left\{\begin{array}{l} P_1=c_1+c_{11} y_1+ c_{12} y_2 +c_{13} y_1^2+ c_{14} y_1^3,\\
P_2=c_2+c_{21} y_1+ c_{22} y_1^2+ c_{23} y_1^3,\\
P_3=c_3+c_{31} y_1 + c_{32} y_2+ c_{33} y_1^2 +c_{34} y_1 y_2+ c_{35} y_1^2 y_2\end{array}\right\}.\]

Observe that $\ags(\cP)$ is algebraically essential because the linear part of the polynomials in $\ags(\cP)$, $\{c_1+c_{11} y_1+ c_{12} y_2, c_2+c_{21} y_1, c_3+c_{31} y_1 + c_{32} y_2\}$ is an algebraically essential system that verifies (A1).
Thus the algebraic resultant $\res(\cP)$ is nontrivial.
Using "toricres04", Maple 9 code for sparse (toric) resultant matrices by I.Z. Emiris, \cite{CE},
we obtain a $12\times 12$ matrix $S_1(\cP)$ whose rows contain the coefficients of the polynomials
\begin{align*}
&y_1 P_1,\, y_1y_2 P_1,\, y_1 y_2^2 P_1,\, y_1^2 P_2,\, y_1y_2 P_2,\, y_1^2y_2 P_2,\\
&y_1y_2^2P_2,\, y_1^2y_2^2 P_2,\, y_1P_3,\, y_1y_2P_3,\, y_1y_2^2P_3,\, y_1y_2^3P_3
\end{align*}
in the monomials
\[y_1,\, y_1^2,\, y_1y_2,\, y_1^2y_2,\, y_1y_2^2,\, y_1^2 y_2^2,\, y_1 y_2^3,\, y_1^2 y_2^3,\, y_1y_2^4,\, y_1^2y_2^4,\, y_1y_2^5,\, y_1^2y_2^5.\]
If the order of the input polynomials is $P_2$, $P_3$, $P_1$, we get a $13\times 13$ matrix $S_2(\cP)$ and if the order is $P_3$, $P_1$, $P_2$, the matrix $S_3(\cP)$ obtained is $11\times 11$, namely
\[
 \left[
{\begin{array}{ccccccccccc}
 {c_1} & {c}_{12} & {c}_{11} & 0
 &  {c}_{13} & 0 &  {c}_{14} & 0 & 0 &
0 & 0 \\
 {c_3} &  {c}_{32} &  {c}_{31} &
 {c}_{34} &  {c}_{33} &  {c}_{35} & 0 & 0 & 0 & 0 & 0 \\
0 & 0 &  {c_1} &  {c}_{12} & {c}_{11} & 0 &  {c}_{13} & 0 & {c}_{14}
 & 0 & 0 \\
0 & 0 &  {c_3} &  {c}_{32} &  {c}_{31} &  {c}_{34} &  {c}_{33} &
 {c}_{35} & 0 & 0 & 0 \\
0 & 0 & 0 & 0 &  {c_1} & {c}_{12} &  {
c}_{11} & 0 &  {c}_{13} & 0 &  {c}_{14} \\
0 & 0 & 0 & 0 &  {c_3} &  {c}_{32} & {
c}_{31} &  {c}_{34} &  {c}_{33}
 &  {c}_{35} & 0 \\
 {c_2} & 0 &  {c}_{21} & 0 & {c}_{22} & 0 &  {c}_{23} & 0 & 0 & 0 & 0 \\
0 &  {c_2} & 0 &  {c}_{21} & 0 & {c}_{22} & 0 &  {c}_{23} & 0 & 0 & 0 \\
0 & 0 &  {c_2} & 0 &  {c}_{21} & 0 &
{c}_{22} & 0 &  {c}_{23} & 0 & 0 \\
0 & 0 & 0 &  {c_2} & 0 &  {c}_{21} & 0 &
 {c}_{22} & 0 & {c}_{23} & 0 \\
0 & 0 & 0 & 0 &  {c_2} & 0 &  {c}_{21} & 0 &
 {c}_{22} & 0 & {c}_{23}
\end{array}}
 \right].
\]
The determinants of these matrices are
\[D_1(\cP)=-c_3 \res(\cP), D_2(\cP)=c_1^2\res(\cP)\mbox{ and }D_3(\cP)=\res(\cP).\]
\end{ex}

\section{Differential specialization}\label{sec-differential specialization}

We are ready to define differential resultant formulas for $\cP$, through the specialization of the previously defined Sylvester matrices.


\para

Given $f\in\ps(\cP)$, with $f=\sum_{\alpha\in\cA(f)} a_{\alpha}^f \upsilon (y^{\alpha})$,
let us denote by $A_f:=\{a_{\alpha}^f\mid \alpha\in\cA(f)\}$ its coefficient set and
\begin{equation}\label{eq-AP}
A(\cP):=\cup_{f\in\ps(\cP)} A_f.
\end{equation}
Given $l\in\{1,\ldots ,L\}$, such that $\rho(l)=f$, and $c_{\alpha}^l\in \cC_l$, it holds that $a_{\alpha}^{\rho(l)}\in A_f$.
Thus we can define the specialization map
\begin{align*}
\Xi:\cC\rightarrow A(\cP), \mbox{ by }\Xi(c_{\alpha}^l)=a_{\alpha}^{\rho(l)},
\end{align*}
which naturally extends to a ring epimorphism, defining $\Xi(y_l)=\upsilon(y_l)$,
\[\Xi: \bbQ[\cC][\cY^{\pm}]\rightarrow \bbQ[A(\cP)][\cV(\cP)^{\pm}].\]
$\bbQ[A(\cP)][\cV(\cP)^{\pm}]$ is included in the differential ring $\bbQ\{A(\cP)\}\{U^{\pm}\}\subseteq \cD\{U^{\pm}\}$ and obviously
\[\Xi(P_l)=\rho(l)\in\ps(\cP), l=1,\ldots ,L.\]
Let us assume that $\cP$ is supper essential to define the determinants $D_l(\cP)$, $l=1,\ldots ,L$ in \eqref{eq-Dl}.
By Proposition \ref{prop-D}, each $D_l(\cP)$ belongs to the ideal $(\ags(\cP))\cap\bbQ[\cC]$ and
\begin{equation}\label{eq-XiDl}
\Xi(D_l(\cP))\in [\cP]\cap\cD.
\end{equation}
As defined in \eqref{eq-formula}, $\Xi(D_l(\cP))$ is a differential resultant formula for $\cP$ with
\[L_i=J_i(\cP)-\gamma(\cP),\,\,\, \cU=\cV(\cP)\mbox{ and }\Omega_f=\Xi(\Lambda_{\lambda(f)}), \Omega=\Xi(\Lambda),\]
$f\in\ps(\cP)$.

\begin{ex}
To finish Example \ref{ex-linear}.
The specializations $\Xi( D_1(\cP))$, $\Xi(D_2(\cP))$ and $\Xi(D_3(\cP))$ are nonzero polynomials in the
differential elimination ideal $[\cP]\cap\cD$ (they are not included due to their size), in particular $\Xi(D_3(\cP))=\Xi(\res(\cP))$.
\end{ex}

Observe that, even for nonzero $D_l(\cP)$, $\Xi(D_l(\cP))$ could be zero, in which case the perturbation methods in \cite{DAE} could be used to obtain a nonzero differential polynomial in $[\cP]\cap\cD$. Alternatively, an algorithm to specialize step by step and obtain a factor of the specialization, which is a nonzero differential polynomial in $[\cP]\cap\cD$, is proposed next.
A similar argument is used in other specialization results as \cite{HP}, p. 168-169 and it was used in the proof of \cite{LYG}, Theorem 6.5 for a system of non sparse generic non homogeneous differential polynomials.

\para

For $i=1,\ldots ,n$, let us assume that $A_{f_i}=\{a_i,a_{i,k}\mid k=1,\ldots ,l_i\}$ and $\{\upsilon(y^{\alpha})\mid \alpha\in\cA(f_i)\}=\{M_i,M_{i,k}\mid k=1,\ldots ,l_i\}$, then
\[f_i=a_{i} M_{i}+a_{i,1} M_{i,1}+\cdots +a_{i,l_i} M_{i,l_i}.\]
In the remaining parts of this section, we consider differential indeterminates $\frak{a}_i$, $i=1,\ldots ,n$ over $\bbQ$ and the system
\[\tilde{\cP}:=\{F_i:=\frak{a}_{i} M_i+a_{i,1} M_{i,1}+\cdots +a_{i,l_i} M_{i,l_i}\mid i=1,\ldots ,n\},\]
of sparse Laurent differential polynomials in $\tilde{\cD}\{U^{\pm}\}$, with differential domain $\tilde{\cD}:=\cD\{\frak{a}_1,\ldots ,\frak{a}_n\}$.
Observe that $\ags(\tilde{\cP})=\ags(\cP)=\{P_1,\ldots ,P_L\}$, thus $D_l(\cP)=D_l(\tilde{\cP})$.
Let us assume that specialization map $\Xi:\cC\rightarrow A(\tilde{\cP})$ verifies
\begin{equation}\label{eq-Xicl}
\Xi(c_l)=\partial^k \frak{a}_i, \mbox{ if }\rho(l)=\partial^k F_i,
\end{equation}
given $l\in\{1,\ldots ,L\}$ and as in \eqref{eq-Pl}
\[P_l= c_l T_l +\sum_{h=1}^{h_l} c_{l,h} T_{l,h},\mbox{ with }h_l:=|\cA(\rho(l))|-1.\]
Thus $\Xi(\{c_l,c_{l,h}\mid h=1,\ldots ,h_l\})=A_{\rho(l)}$ and
\[A(\tilde{\cP})=\Xi(\overline{\cC})\cup \Xi({\bf c})=\Xi(\overline{\cC})\cup \{\frak{a}_1,\ldots ,\partial^{J_1-\gamma}\frak{a}_1,\ldots , \frak{a}_n,\ldots ,\partial^{J_n-\gamma}\frak{a}_n\},\]
with $\cC=\overline{\cC}\cup {\bf c}$ and ${\bf c}=\{c_1,\ldots ,c_L\}$.

\para

The idea is that, to study the specialization of $D_l(\cP)$ to the coefficients $A(\cP)$, one can first study the specialization to the coefficients  $A(\tilde{\cP})$ and then specialize $\{\frak{a}_{i}\mid i=1,\ldots ,n\}$ to $\{a_{i}\mid i=1,\ldots ,n\}$. We dedicate the rest of the section to the first part of this specialization. The results obtained will be used in Section \ref{sec-order and degree bounds} to study the case of sparse generic differential systems. The behavior of the specialization of $\{\frak{a}_{i}\mid i=1,\ldots ,n\}$ would depend on the specific domain $\cD$ to be considered.

\para

Given a nonzero differential polynomial $Q\in (\ags(\tilde{\cP}))\cap \bbQ[\cC]$, note that $\Xi(Q)\in [\tilde{\cP}]\cap \tilde{\cD}$ but we cannot guarantee that $\Xi(Q)$ is nonzero.
For a subset $\Delta$ of $\cC$, define partial specializations
\[\Xi_{\Delta}:\cC\rightarrow (\cC\backslash \Delta)\cup \Xi(\Delta),\mbox{ by }
\Xi_{\Delta}(c)=\left\{\begin{array}{ll}\Xi(c),& c\in\Delta,\\c,& c\notin\Delta,\end{array}\right.\]
which naturally extend to ring epimorphisms
\[\Xi_{\Delta}: \bbQ[\cC][\cY^{\pm}]\rightarrow \bbQ[(\cC\backslash \Delta)\cup \Xi(\Delta)][\cY^{\pm}].\]
Observe that we leave the monomials in $\bbQ[\cY^{\pm}]$ fixed for the moment and, if $\Delta=\cC$ then $\Xi_{\Delta}(\bbQ[\cC])=\bbQ[A(\tilde{\cP})]$.
Let
\[\Xi_{\cY^{\pm}}: \bbQ[A(\tilde{\cP})][\cY^{\pm}]\rightarrow \bbQ[A(\tilde{\cP})][\cV(\tilde{\cP})^{\pm}],\]
be defined by $\Xi_{\cY^{\pm}}(y_l)=\upsilon(y_l)$.

\begin{alg}\label{alg-specialize}
\begin{itemize}
\item \underline{\sf Given}  a nonzero polynomial $Q$ in $(\ags(\tilde{\cP}))\cap \bbQ[\cC]$.
\item \underline{\sf Return} a nonzero differential polynomial $H$ in $[\tilde{\cP}]\cap\tilde{\cD}$.
\end{itemize}
\begin{enumerate}
\item Let $\Delta:=\emptyset$ and $H:=Q$.

\item If $\cC\backslash\Delta=\emptyset$, return $\Xi_{\cY^{\pm}}(H)$.

\item Choose $c\in \cC\backslash\Delta$ and define $\Delta:=\Delta\cup\{c\}$.

\item If $\Xi_{\Delta}(H)\neq 0$ then $H:=\Xi_{\Delta}(H)$, go to step $2$.

\item $H=(c-\Xi(c))^s \overline{H}$, $s\in\bbN\backslash\{0\}$, set $H:=\Xi_{\Delta}(\overline{H})\neq 0$ and go to step $2$.
\end{enumerate}
\end{alg}

We prove next that the output of the previous algorithm is a nonzero differential polynomial in $[\tilde{\cP}]\cap\tilde{\cD}$.
Given $\emptyset\neq \Delta\subset \cC$, observe that $\Xi_{\Delta}(c_l)$ equals $\partial^k \frak{a}_i$, if $c_l\in\Delta$, and $c_l$ otherwise.
Let us consider ideals
\[\cI_{\Delta,{\cY}^{\pm}}:=(\Xi_{\Delta}(P_1),\ldots ,\Xi_{\Delta}(P_L))_{\bbD_{\Delta}[\cY^{\pm}]}\]
generated by $\Xi_{\Delta}(\ags(\tilde{\cP}))$ in  $\bbD_{\Delta}[\cY^{\pm}]$, with
\begin{equation*}
\bbD_{\Delta}:=\left\{
\begin{array}{l}
\bbQ[\cC]\mbox{ if }\Delta=\emptyset,\\
\bbQ[\Xi(\overline{\cC}\cap\Delta)][\overline{\cC}\backslash\Delta\cup\Xi_{\Delta}({\bf c})]\mbox{ if }\Delta\neq\emptyset.
\end{array}
\right.
\end{equation*}
Observe that $\cI_{\cC,\cY^{\pm}}$ is the ideal generated by $\Xi_{\cC}(\ags(\tilde{\cP}))$ in
\[\bbD_{\cC}[\cY^{\pm}]=\bbQ[\Xi(\overline{\cC})][\frak{a}_1,\ldots ,\partial^{J_1-\gamma}\frak{a}_1,\ldots , \frak{a}_n,\ldots ,\partial^{J_n-\gamma}\frak{a}_n][\cY^{\pm}].\]
Let $\bbK_{\Delta}:=\bbQ(\Xi(\overline{\cC}\cap\Delta))$ if $\Delta\neq\emptyset$ and $\bbK_{\emptyset}:=\bbQ$. Observe that
\begin{equation}\label{eq-kdelta}
\bbD_{\Delta}\subset \bbK_{\Delta} [\overline{\cC}\backslash\Delta\cup\Xi_{\Delta}({\bf c})].
\end{equation}

\begin{lem}\label{lem-prime}
$\cI_{\Delta,{\cY}^{\pm}}\cap\bbK_{\Delta}[\overline{\cC}\backslash\Delta\cup\Xi_{\Delta}({\bf c})]$ is a prime ideal.
\end{lem}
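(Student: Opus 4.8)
The plan is to adapt the proof of Lemma \ref{lem-prime1}, using the criterion of \cite{Kol}, Chapter $0$, $\S 11$ that an ideal of a polynomial algebra over a field is prime if and only if it admits a generic zero. Work inside $\cE_\Delta[\cY^{\pm}]$, where $\cE_\Delta:=\bbK_\Delta[\overline{\cC}\backslash\Delta\cup\Xi_\Delta({\bf c})]$ is, by construction, a polynomial ring over the field $\bbK_\Delta$ in the pairwise distinct indeterminates $\overline{\cC}\backslash\Delta$ and $\Xi_\Delta(c_1),\dots,\Xi_\Delta(c_L)$. By \eqref{eq-kdelta}, $\bbD_\Delta\subseteq\cE_\Delta$, so $\cI_{\Delta,{\cY}^{\pm}}$ extends to the ideal $\hat{\cI}_\Delta$ generated by $\Xi_\Delta(\ags(\tilde{\cP}))$ in $\cE_\Delta[\cY^{\pm}]$, and a routine localization argument identifies $\cI_{\Delta,{\cY}^{\pm}}\cap\bbK_\Delta[\overline{\cC}\backslash\Delta\cup\Xi_\Delta({\bf c})]$ with the contraction $\hat{\cI}_\Delta\cap\cE_\Delta$, an ideal of $\cE_\Delta$. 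It therefore suffices to produce a generic zero of $\cJ:=\hat{\cI}_\Delta\cap\cE_\Delta$.

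Adapting \eqref{eq-gazero}, I would take the following candidate. From \eqref{eq-Pl}, $\Xi_\Delta(P_l)=\Xi_\Delta(c_l)\,T_l+\sum_{h=1}^{h_l}\Xi_\Delta(c_{l,h})\,T_{l,h}$, where $\Xi_\Delta(c_l)$ equals the variable $c_l$ of $\cE_\Delta$ when $c_l\notin\Delta$ and the variable $\partial^k\frak{a}_i$ (with $\rho(l)=\partial^k F_i$) when $c_l\in\Delta$, while $\Xi_\Delta(c_{l,h})\in\cE_\Delta$ in all cases (the variable $c_{l,h}$, or an element of $\bbK_\Delta$). Put
\[
\tilde{\epsilon}_l:=-\sum_{h=1}^{h_l}\Xi_\Delta(c_{l,h})\,\frac{T_{l,h}}{T_l}\in\cE_\Delta[\cY^{\pm}],\qquad l=1,\dots,L,
\]
and let $\tilde{\epsilon}$ be the point whose coordinates are the elements of $\overline{\cC}\backslash\Delta$ themselves together with $\tilde{\epsilon}_1,\dots,\tilde{\epsilon}_L$ in the slots indexed by $\Xi_\Delta(c_1),\dots,\Xi_\Delta(c_L)$; it lies in $\bbE_\Delta^{N}$, with $\bbE_\Delta:=\bbK_\Delta(\overline{\cC}\backslash\Delta)(\cY)$ and $N:=|\overline{\cC}\backslash\Delta|+L$. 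Since the generators of $\cE_\Delta$ are algebraically independent over $\bbK_\Delta$, this is a well-defined $\bbK_\Delta$-algebra map $\cE_\Delta\to\bbE_\Delta$, extended to $\cE_\Delta[\cY^{\pm}]$ by fixing $\cY$. Exactly as in Lemma \ref{lem-prime1}, $\tilde{\epsilon}$ is a zero of $\cJ$: $\Xi_\Delta(P_l)(\tilde{\epsilon})=\tilde{\epsilon}_l\,T_l+\sum_{h=1}^{h_l}\Xi_\Delta(c_{l,h})\,T_{l,h}=0$, so every generator of $\hat{\cI}_\Delta$, hence every element of $\cJ$, vanishes at $\tilde{\epsilon}$.

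It remains to check that $\tilde{\epsilon}$ is a \emph{generic} zero: given $G\in\cE_\Delta$ with $G(\tilde{\epsilon})=0$, show $G\in\cJ$. This is the pseudo-remainder computation of Lemma \ref{lem-prime1} carried out over $\cE_\Delta$. For each $l$ pick a monomial $N_l$ in $\cY$ with $N_l\Xi_\Delta(P_l)\in\cE_\Delta[\cY]$; its leading coefficient in the leader $\Xi_\Delta(c_l)$ is the monomial $N_lT_l$ in $\cY$ alone, so $\cA=\{N_l\Xi_\Delta(P_l)\mid l=1,\dots,L\}$ is an autoreduced subset of $\cE_\Delta[\cY]$ (in the algebraic sense of \cite{Mishra}) whose leaders $\Xi_\Delta(c_1),\dots,\Xi_\Delta(c_L)$ are distinct variables, each occurring in a single member. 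Let $G_0$ be the pseudo-remainder of $G$ modulo $\cA$, so $MG=\sum_l\beta_lN_l\Xi_\Delta(P_l)+G_0$ for a monomial $M$ in $\cY$, with $G_0\in\bbK_\Delta[\overline{\cC}\backslash\Delta][\cY]$ because each $N_l\Xi_\Delta(P_l)$ is linear in its leader. Evaluating at $\tilde{\epsilon}$, which fixes $\overline{\cC}\backslash\Delta$ and $\cY$, gives $G_0=G_0(\tilde{\epsilon})=M\,G(\tilde{\epsilon})-\sum_l\beta_l(\tilde{\epsilon})\,N_l\Xi_\Delta(P_l)(\tilde{\epsilon})=0$, whence $MG\in\hat{\cI}_\Delta$ and, $M$ being a unit of $\cE_\Delta[\cY^{\pm}]$, also $G\in\hat{\cI}_\Delta\cap\cE_\Delta=\cJ$. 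This proves $\cJ$ prime; for $\Delta=\emptyset$ one recovers $\cE_\emptyset=\bbQ[\cC]$, $\tilde{\epsilon}=\epsilon$ and Lemma \ref{lem-prime1} itself. I expect the only genuinely delicate point to be the bookkeeping guaranteeing that no partial specialization collapses a leader $\Xi_\Delta(c_l)$ into $\bbK_\Delta$ or onto another variable, so that $\cE_\Delta$ is indeed a polynomial ring in which both the generic-zero criterion and the autoreduced structure of $\cA$ survive.
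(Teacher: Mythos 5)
Your proof is correct and follows essentially the same route as the paper: the paper also defines $\epsilon_l^{\Delta}:=-\sum_{h=1}^{h_l}\Xi_{\Delta}(c_{l,h})\,T_{l,h}/T_l$, takes $\epsilon^{\Delta}=(\overline{\cC}\backslash\Delta;\epsilon_1^{\Delta},\ldots,\epsilon_L^{\Delta})$ as the candidate generic zero, and invokes the Kolchin generic-zero criterion, leaving the pseudo-remainder verification as a direct adaptation of Lemma \ref{lem-prime1}. The ``delicate point'' you flag at the end is already secured by the construction of $\tilde{\cP}$ with fresh differential indeterminates $\frak{a}_i$ and by \eqref{eq-Xicl}, which guarantee the leaders $\Xi_{\Delta}(c_l)$ remain pairwise distinct indeterminates outside $\bbK_{\Delta}$.
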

\begin{proof}
As in the proof of Lemma \ref{lem-prime1}, we prove that $\cI=\cI_{\Delta,{\cY}^{\pm}}\cap\bbK_{\Delta}[\overline{\cC}\backslash\Delta\cup\Xi_{\Delta}({\bf c})]$ has a generic zero. Let us define
\[\epsilon_l^{\Delta}:=-\sum_{h=1}^{h_l} \Xi_{\Delta}(c_{l,h}) \frac{T_{l,h}}{T_{l}}.\]
We can adapt the proof of  Lemma \ref{lem-prime1} to show that
$\epsilon^{\Delta}:=(\overline{\cC}\backslash\Delta;\epsilon_1^{\Delta},\ldots ,\epsilon_L^{\Delta})$
is a generic zero of $\cI$. Observe that $\Xi_{\Delta}(P_l)=\Xi_{\Delta}(c_l)T_l+\sum_h \Xi_{\Delta}(c_{l,h}) T_{l,h}$ and $\Xi_{\Delta}(c_l)$ is replaced by $\epsilon_l^{\Delta}$.
By \cite{Kol}, Chapter 0, Section 11, $\cI$ is a prime ideal.
\end{proof}

\begin{thm}\label{thm-alg}
Given a nonzero differential polynomial in $(\ags(\tilde{\cP}))\cap \bbQ[\cC]$,
the output of Algorithm \ref{alg-specialize} is a nonzero differential polynomial in $[\tilde{\cP}]\cap\tilde{\cD}$.
\end{thm}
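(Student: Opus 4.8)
The plan is to track how Algorithm \ref{alg-specialize} maintains two invariants as $\Delta$ grows from $\emptyset$ to $\cC$: first, that the current working polynomial $H$ lies in $\cI_{\Delta,\cY^{\pm}}\cap\bbD_\Delta$; and second, that $H\neq 0$. Granting these, termination at step $2$ with $\Delta=\cC$ yields $H\in\cI_{\cC,\cY^{\pm}}\cap\bbD_{\cC}$, so $\Xi_{\cY^{\pm}}(H)$ is a nonzero differential polynomial obtained by applying $\Xi_{\cY^{\pm}}$ to a member of the ideal generated by $\Xi_{\cC}(\ags(\tilde\cP))=\{\,\partial^k F_i\mid \rho(l)=\partial^k F_i\,\}$; since $\Xi_{\cY^{\pm}}$ sends each $\Xi_\cC(P_l)$ to the corresponding element $\partial^k F_i$ of $\ps(\tilde\cP)$ and fixes nothing outside $\cV(\tilde\cP)$, the image lands in $[\tilde\cP]\cap\tilde\cD$, which is exactly the claimed output. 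Here I use that $H\in\bbD_\Delta$, not merely in the larger field-coefficient ring $\bbK_\Delta[\overline{\cC}\backslash\Delta\cup\Xi_\Delta({\bf c})]$, so that the final specialization of the $\cY$-monomials is legitimate and produces a differential polynomial over $\tilde\cD$.

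The ideal-membership invariant is the easy half: step $1$ establishes it since $Q\in(\ags(\tilde\cP))\cap\bbQ[\cC]$; step $4$ preserves it because $\Xi_\Delta$ is a ring epimorphism carrying $\cI_{\Delta',\cY^{\pm}}$ onto $\cI_{\Delta,\cY^{\pm}}$ when $\Delta=\Delta'\cup\{c\}$; and step $5$ preserves it because if $H\in\cI_{\Delta',\cY^{\pm}}$ then, specializing $c\mapsto\Xi(c)$, the factor $(c-\Xi(c))^s$ dies and $\Xi_\Delta(\overline H)=\Xi_\Delta(H)/(c-\Xi(c))^s\cdot(\text{unit})$ — more carefully, $\Xi_\Delta(H)=0$ forces all the "mass" of $H$ into the $(c-\Xi(c))$-adic part, and dividing out $(c-\Xi(c))^s$ before specializing still lands in $\cI_{\Delta,\cY^{\pm}}$ because that factor was not contributing to the ideal relation. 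One must check the coefficient ring bookkeeping: after step $5$, $\overline H$ still has coefficients in $\bbD_{\Delta'}$ and $\Xi_\Delta(\overline H)\in\bbD_\Delta$, using that $\bbD_\Delta$ is obtained from $\bbD_{\Delta'}$ by the single substitution $c\mapsto\Xi(c)$.

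The nonvanishing invariant is where Lemma \ref{lem-prime} does the real work, and this is the step I expect to be the main obstacle. The point is that in step $5$ we must know that when $\Xi_\Delta(H)=0$, the polynomial $H\in\bbD_{\Delta'}[\cY^{\pm}]$ — equivalently, viewed in $\bbK_{\Delta'}[\overline{\cC}\backslash\Delta'\cup\Xi_{\Delta'}({\bf c})]$ — is divisible by $(c-\Xi(c))$, so that peeling off the maximal power $(c-\Xi(c))^s$ leaves $\overline H$ with $\Xi_\Delta(\overline H)\neq 0$. Divisibility by $(c-\Xi(c))$ is automatic in a polynomial ring once the variable $c$ is specialized to a value making $H$ vanish, provided $c$ genuinely appears as a free polynomial variable at that stage — and whether $c$ is free or has already been replaced is precisely the content of the case split $c\in\overline{\cC}$ versus $c\in{\bf c}$ in the definition of $\bbD_\Delta$. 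So I would argue: $H$ lies in $\cI_{\Delta',\cY^{\pm}}\cap\bbK_{\Delta'}[\overline{\cC}\backslash\Delta'\cup\Xi_{\Delta'}({\bf c})]$, which by Lemma \ref{lem-prime} is prime; the specialization $\Xi_\Delta$ corresponds to evaluating the generic zero $\epsilon^{\Delta'}$ of this prime ideal at the further substitution defining $\epsilon^\Delta$, and $\Xi_\Delta(H)=0$ with $H$ nonzero and the ideal prime forces $H$ to be divisible by exactly the local factor $(c-\Xi(c))^s$ cut out by that substitution. Extracting $\overline H$ and re-specializing then gives $\Xi_\Delta(\overline H)\neq 0$ by maximality of $s$. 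Finally, when the loop exits, $\Delta=\cC$, $H\neq 0$ lies in $\bbD_{\cC}[\cY^{\pm}]=\bbQ[\Xi(\overline\cC)][\frak a_1,\ldots,\partial^{J_n-\gamma}\frak a_n][\cY^{\pm}]$, and applying $\Xi_{\cY^{\pm}}$ preserves nonvanishing since it is an isomorphism on the relevant Laurent-monomial ring; combined with the ideal-membership invariant this yields a nonzero element of $[\tilde\cP]\cap\tilde\cD$, as required.
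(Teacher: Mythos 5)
Your proposal is correct and follows essentially the same route as the paper's own proof: maintain the invariant that $H$ is a nonzero element of $\cI_{\Delta,\cY^{\pm}}\cap\bbD_{\Delta}$, use the primality from Lemma \ref{lem-prime} when stripping off $(c-\Xi(c))^s$ in step 5, use maximality of $s$ for nonvanishing, and conclude with $\Xi_{\cY^{\pm}}$ landing the result in $(\ps(\tilde{\cP}))\subseteq[\tilde{\cP}]\cap\tilde{\cD}$. The only minor imprecision is that you partly attribute the divisibility by $(c-\Xi(c))$ to primality, whereas that is just the factor theorem in the polynomial variable $c$; primality is what guarantees $\overline{H}$ stays in the ideal (since $(c-\Xi(c))$ does not vanish at the generic zero), but both ingredients appear in your argument, so there is no gap.
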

\begin{proof}
Let $c\in\cC\backslash\Delta$, if $H\in \cI_{\Delta,\cY^{\pm}}\cap\bbD_{\Delta}$ verifies $\Xi_{\Delta}(H)=0$ then
$H=(c-\Xi(c))^s \overline{H}$ with $\overline{H}\in\bbD_{\Delta}$. By Lemma \ref{lem-prime} and \eqref{eq-kdelta}, $\overline{H}\in \cI_{\Delta,\cY^{\pm}}\cap\bbD_{\Delta}$.
For $\Delta'=\Delta\cup\{c\}$, if $H\in \cI_{\Delta,\cY^{\pm}}\cap\bbD_{\Delta}$ then \[\Xi_{\Delta'}(H)\in \cI_{\Delta',\cY^{\pm}}\cap\bbD_{\Delta'}.\]
Thus steps 4 and 5 of Algorithm \ref{alg-specialize} return polynomials in $\cI_{\Delta',\cY^{\pm}}$.
If $\Delta'=\cC$ then step 2 returns $\Xi_{\cY^{\pm}}(H)$ that belongs to $(\ps(\tilde{\cP}))$ the ideal generated by $\ps(\tilde{\cP})$ in $\bbQ[A(\tilde{\cP})][\cV(\tilde{\cP})^{\pm}]$, thus $\Xi_{\cY^{\pm}}(H)\in [\tilde{\cP}]\cap \tilde{\cD}$.
\end{proof}

Therefore, if $D_l(\tilde{P})$ is nonzero, by Proposition \ref{prop-D} it can be taken as the input of Algorithm \ref{alg-specialize} and, by Theorem \ref{thm-alg}, we obtain a nonzero differential polynomial in $[\tilde{\cP}]\cap\tilde{\cD}$.

\section{Order and degree bounds}\label{sec-order and degree bounds}

In this section, we prove that the formulas obtained are multiples of the differential resultant of a system of generic sparse differential polynomials, defined by Li, Gao and Yuan in \cite{LYG} and whose definition we include below. This fact is used to give order and degree bounds of the sparse differential resultant.

\para

Let us consider sets of differential indeterminates over $\bbQ$,
$A_i:=\{\frak{a}_{\alpha}^i\mid \alpha\in\cA(f_i)\}$, $i=1,\ldots ,n$, $A:=\cup_{i=1}^n A_i$ and
a differential domain $\frak{D}:=\bbQ\{A\}$,
to define the system $\frak{P}:=\{\bbF_1,\ldots ,\bbF_n\}$ of sparse generic differential polynomials in $\frak{D}\{U^{\pm}\}$.
The sparse generic polynomial $\bbF_i$ in $\frak{D}\{U^{\pm}\}$ with algebraic support $\cA(f_i)$ is
\[\bbF_i:=\sum_{\alpha\in\cA(f_i)} \frak{a}_{\alpha}^i \upsilon(y^{\alpha}),\]
which has order $o_i$.
The ideal generated by $\frak{P}$ in $\frak{D}\{U^{\pm}\}$ is denoted by $[\frak{P}]$.
In this section $J_i$ and $\gamma$ denote $J_i(\frak{P})$ and $\gamma(\frak{P})$ respectively. Let $A(\frak{P})$ be as in \eqref{eq-AP} and $\partial^k A_i:=\{\partial^k \frak{a}\mid \frak{a}\in A_i\}$, $k\in\bbN$. It holds
\[A\subset A(\frak{P})\mbox{ and }\bbQ[A(\frak{P})]=\bbQ[\cup_{i=1}^n A_i^{[J_i-\gamma]}],\]
with
\begin{equation}\label{eq-Aiomega}
A_i^{[\tau_i]}:=\left\{\begin{array}{l}
\cup_{i=1}^n\cup_{k=0}^{\tau_i} \partial^k A_i\mbox{ if }\tau_i\in\bbN,\\
\emptyset\mbox{ if }\tau_i=-\infty.
\end{array}\right.
\end{equation}
\para

If the differential elimination ideal $[\frak{P}]\cap\frak{D}$ has dimension $n-1$ then $[\frak{P}]\cap\frak{D}=\sat(\dres(\frak{P}))$, the saturated ideal determined by an irreducible differential polynomial $\dres(\frak{P})$, which is called the {\sf sparse differential resultant of} $\frak{P}$, \cite{LYG}, Definition 3.10. The saturated ideal of $\dres(\frak{P})$ is the set of all differential polynomials in $\frak{D}$ whose differential remainder (under any elimination ranking) w.r.t. $\dres(\frak{P})$ is zero, see \cite{Ritt} and \cite{Kol}.

\begin{lem}\label{eq-ordres1}
For every $H\in [\frak{P}]\cap\frak{D}$
\[\omega_i:=\ord(\dres(\frak{P}),A_i)\leq\ord(H,A_i),\,\,\, i=1,\ldots ,n.\]
\end{lem}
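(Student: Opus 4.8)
The key tool is the characterization of the saturated ideal: since $[\frak{P}]\cap\frak{D}=\sat(\dres(\frak{P}))$, a differential polynomial $H\in\frak{D}$ lies in $[\frak{P}]\cap\frak{D}$ if and only if its differential pseudo-remainder with respect to $\dres(\frak{P})$, under any elimination ranking, is zero. The plan is to exploit this by choosing the ranking cleverly. Fix an index $i\in\{1,\ldots,n\}$ and suppose, aiming for a contradiction, that there is some $H\in[\frak{P}]\cap\frak{D}$ with $\ord(H,A_i)<\omega_i=\ord(\dres(\frak{P}),A_i)$. I would set up an elimination ranking on the differential indeterminates $A=\cup A_j$ in which every derivative of every element of $A_i$ is ranked \emph{higher} than all derivatives of the elements of $A_j$ for $j\neq i$ (for instance, order the blocks $A_i \gg A_1,\ldots,\widehat{A_i},\ldots,A_n$, with any orderly ranking inside each block).

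The main point is then a leader/reduction argument. Under such a ranking, because $\dres(\frak{P})$ genuinely involves a derivative $\partial^{\omega_i}\frak{a}$ for some $\frak{a}\in A_i$ (that is what $\ord(\dres(\frak{P}),A_i)=\omega_i$ means), its leader is a derivative of an element of $A_i$ of order exactly $\omega_i$ — this uses that the leader sits in the highest-ranked block, and orders within $A_i$. On the other hand, $H$ involves only derivatives of elements of $A_i$ up to order $<\omega_i$, hence every derivative of $A_i$ appearing in $H$ is ranked strictly below the leader of $\dres(\frak{P})$. Therefore $H$ is already reduced with respect to $\dres(\frak{P})$: no derivative $\partial^k(\mathrm{leader})$ for $k\ge 0$ occurs in $H$, since such a derivative has order $\ge\omega_i$ in $A_i$. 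Consequently the differential pseudo-remainder of $H$ modulo $\dres(\frak{P})$ is $H$ itself. Membership in $\sat(\dres(\frak{P}))$ then forces $H=0$, so the bound $\omega_i\le\ord(H,A_i)$ holds for every nonzero $H$, and trivially for $H=0$ as well.

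The step I expect to be the main obstacle is justifying precisely that, with the block-elimination ranking described, the leader of $\dres(\frak{P})$ is a derivative of an element of $A_i$ \emph{of order exactly} $\omega_i$, and not merely of order $\le\omega_i$. One must rule out the degenerate possibility that $\dres(\frak{P})$, while involving $\partial^{\omega_i}\frak{a}$, has its actual leader in a lower block because, say, no element of $A_i$ appears at all — but that is excluded as soon as $\omega_i\ge 0$, i.e. as soon as $\dres(\frak{P})$ does involve some derivative from $A_i$; if $\omega_i=-\infty$ the asserted inequality is vacuous. With $\omega_i\ge 0$, the elimination ranking puts the leader in the $A_i$-block, and an orderly ranking inside $A_i$ makes its order equal to $\max$ over $A_i$ of the orders appearing, which is exactly $\omega_i$. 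The remaining ingredients — that pseudo-reduction does not raise orders in the $A_i$-block below the leader, and that $H$ reduced modulo $\dres(\frak{P})$ combined with $H\in\sat(\dres(\frak{P}))$ gives $H=0$ — are standard facts about saturated ideals of a single irreducible differential polynomial (see \cite{Ritt}, \cite{Kol}), and I would cite them rather than reprove them.
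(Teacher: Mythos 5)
Your proof is correct and follows essentially the same route as the paper's (much terser) argument: choose an elimination ranking placing the derivatives of $A_i$ above all other blocks, observe that a polynomial of lower order in $A_i$ is already reduced with respect to $\dres(\frak{P})$, and invoke $H\in\sat(\dres(\frak{P}))$ to force $H=0$. One tiny slip: the bound is not ``trivially'' true for $H=0$ (there $\ord(H,A_i)=-\infty$), but the lemma is of course meant for nonzero $H$, an implicit convention the paper shares.
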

\begin{proof}
If $\ord(H,A_k)=-\infty$ then $\ord(\dres(\frak{P},A_k))=-\infty$, since otherwise $H$ cannot be reduced to zero by an elimination ranking with elementes in $A_k$ greater than the elements of $A_i$, $i\neq k$. Thus the result follows easily from $H\in \sat(\dres(\frak{P}))$.
\end{proof}


\para

For $i=1,\ldots ,n$, let us assume that $A_i=\{\frak{a}_i,\frak{a}_{i,k}\mid k=1,\ldots ,l_i\}$ and $\{\upsilon(y^{\alpha})\mid \alpha\in\cA(f_i)\}=\{M_i,M_{i,k}\mid k=1,\ldots ,l_i\}$, then
\begin{equation}\label{eq-Fi}
\bbF_i=\frak{a}_{i} M_i+\frak{a}_{i,1} M_{i,1}+\cdots +\frak{a}_{i,l_i} M_{i,l_i}.
\end{equation}
By \cite{LYG}, Definition 3.6, $\frak{P}$ is {\sf Laurent differentially essential} if for each $i\in\{1,\ldots ,n\}$ there exists $k_i\in\{1,\ldots ,l_i\}$ such that the differential transcendence degree of the set of monomials $\{M_{i,k_i}M_{i}^{-1}\mid i=1,\ldots ,n\}$ over $\bbQ$ is $n-1$.

\para

Let $\cI$ be a differential ideal in $\frak{D}=\bbQ\{A\}$.
Given a differential field extension $\cE$ of $\bbQ$, $\zeta$ in $\cE^{|A|}$ is called a {\sf generic zero} of $\cI$ if, a differential polynomial $F\in\frak{D}$ belongs to $\cI$ if and only if $F(\zeta)=0$, \cite{Ritt}, p. 27. Furthermore,
$\cI$ is prime if and only if it has a generic zero.
Given $\overline{A}:=A\backslash\{\frak{a}_1,\ldots ,\frak{a}_n\}$, the differential field extension $\bbQ\langle \overline{A},\cY^{\pm}\rangle$ of $\bbQ$ contains
\[\zeta_i:=-\sum_{k=1}^{l_i} \frak{a}_{i,k} \frac{M_{i,k}}{M_{i}}, i=1,\ldots ,n.\]
By \cite{LYG}, Corollary 3.12,
\begin{equation}\label{eq-gdzero}
\zeta:=(\overline{A};\zeta_1,\ldots ,\zeta_n)
\end{equation}
is a generic zero of the differential prime ideal $[\frak{P}]\cap\frak{D}$, which has codimension one if and only if $\frak{P}$ is Laurent differentially essential. To prove this last claim, in \cite{LYG}, Theorem 3.9, it is proved that the differential transcendence degree (see \cite{Kol}) of $\bbQ\langle \zeta \rangle$ over $\bbQ$ is $|A|-1=|\overline{A}|+n-1$, using the next result.

\begin{lem}[\cite{LYG}, proof of Theorem 3.9]
For $\cF=\bbQ\langle \overline{A}\rangle$, $\frak{P}$ is Laurent differentially essential if and only if the differential transcendence degree of $\cF\langle\zeta_1,\ldots ,\zeta_n\rangle$ over $\cF$ is $n-1$.
\end{lem}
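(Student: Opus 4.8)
The plan is to reduce the Lemma to the single identity
\[
\delta(\zeta_1,\ldots,\zeta_n)\;=\;\max_{k_1,\ldots,k_n}\ \delta(m_{1,k_1},\ldots,m_{n,k_n}),
\]
where $m_{i,k}:=M_{i,k}M_i^{-1}$, where the maximum runs over all tuples $(k_1,\ldots,k_n)$ with $k_i\in\{1,\ldots,l_i\}$, and where for differential rational functions $g_1,\ldots,g_q$ I write $\delta(g_1,\ldots,g_q)$ for the differential transcendence degree of $\cF\langle g_1,\ldots,g_q\rangle$ over $\cF$. Granting this identity, the Lemma follows: since $\overline A$ consists of differential indeterminates over $\bbQ$ disjoint from $\{U\}$ and each $m_{i,k}$ is a Laurent monomial in the derivatives of $u_1,\ldots,u_{n-1}$, the quantity $\delta(m_{1,k_1},\ldots,m_{n,k_n})$ equals the differential transcendence degree of $\bbQ\langle m_{1,k_1},\ldots,m_{n,k_n}\rangle$ over $\bbQ$, which is at most $n-1$ because $u_1,\ldots,u_{n-1}$ is a differential transcendence basis of $\bbQ\langle u_1,\ldots,u_{n-1}\rangle$ over $\bbQ$; thus the right-hand side above is $\le n-1$, and by definition it equals $n-1$ precisely when $\frak{P}$ is Laurent differentially essential. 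So $\delta(\zeta_1,\ldots,\zeta_n)\le n-1$ always, with equality if and only if $\frak{P}$ is Laurent differentially essential.

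To prove the identity I would invoke a differential Jacobian criterion (see \cite{Kol}). The indeterminates $u_1,\ldots,u_{n-1}$ remain differentially independent over $\cF$ and each $\zeta_i$ lies in $\cF\langle u_1,\ldots,u_{n-1}\rangle$. For $g$ in that field put $\mathrm{L}(g):=(\mathrm{L}_1(g),\ldots,\mathrm{L}_{n-1}(g))$ with $\mathrm{L}_j(g):=\sum_{s\ge0}\frac{\partial g}{\partial u_{j,s}}\,\partial^{\,s}$, a row vector over the Ore ring $R:=\cF\langle u_1,\ldots,u_{n-1}\rangle[\partial]$, and let $K$ be the skew field of fractions of $R$; the criterion says that $\delta(g_1,\ldots,g_q)$ equals the rank over $K$ of the matrix with rows $\mathrm{L}(g_1),\ldots,\mathrm{L}(g_q)$. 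Because $\zeta_i=-\sum_k\frak{a}_{i,k}m_{i,k}$ with $\frak{a}_{i,k}\in\cF$ and $\mathrm{L}_j(c\,m)=c\,\mathrm{L}_j(m)$ for $c\in\cF$, the matrix $M$ attached to $(\zeta_1,\ldots,\zeta_n)$ has $i$-th row $-\sum_k\frak{a}_{i,k}\mathrm{L}(m_{i,k})$: a left $\cF$-linear combination of the fixed vectors $\mathrm{L}(m_{i,1}),\ldots,\mathrm{L}(m_{i,l_i})$, whose entries contain none of the $\frak{a}$'s. Since the $\frak{a}_{i,k}$ are differentially --- hence, in degree $0$, algebraically --- independent over $\bbQ\langle u_1,\ldots,u_{n-1}\rangle$, they are generic coefficients, so a Rado-type theorem on independent transversals gives
\[
\operatorname{rank}_K M\;=\;\max_{k_1,\ldots,k_n}\operatorname{rank}_K\big(\mathrm{L}(m_{1,k_1}),\ldots,\mathrm{L}(m_{n,k_n})\big).
\]
Applying the Jacobian criterion once more to the $n$ monomials $m_{1,k_1},\ldots,m_{n,k_n}$ identifies the inner rank with $\delta(m_{1,k_1},\ldots,m_{n,k_n})$, which is the desired identity.

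The step needing the most care --- and the actual content of \cite{LYG}, proof of Theorem 3.9 --- is the Rado/genericity argument, because it is run over the \emph{noncommutative} Ore skew field $K$: the coefficients $\frak{a}_{i,k}$ multiply on the left and are not central once $\partial$ is present, so the usual principle ``generic completion rank $=$ maximal independent transversal'' must be re-established in this setting. One can bypass the noncommutativity by using finite prolongations: for each $r$, $\partial^{\le r}\zeta_i$ is an $\cF$-linear combination of the $\partial^{\le r}m_{i,k}$, so $\trdeg_{\cF}\cF(\{\partial^{\le r}\zeta_i\})$ is an ordinary generic completion rank over the commutative field $\cF(\{\partial^{\le r}u_j\})$, to which the classical Rado theorem applies, after which one passes to the limit through the differential dimension polynomial. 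For the easier implication ($\frak{P}$ Laurent differentially essential $\Rightarrow$ $\delta(\zeta_1,\ldots,\zeta_n)=n-1$) there is also an elementary route avoiding Jacobians: choosing $k_i$ that attains the maximum, so that $m_{1,k_1},\ldots,m_{n-1,k_{n-1}}$ are differentially independent over $\bbQ$, one specialises the coefficients $\frak{a}_{i,k}$ with $k\ne k_i$ to $0$; since a differential specialization cannot raise differential transcendence degree and $\zeta_i$ then becomes $-\frak{a}_{i,k_i}m_{i,k_i}$ with $\bbQ\langle\frak{a}_{i,k_i}m_{i,k_i}\rangle=\bbQ\langle m_{i,k_i}\rangle$, this forces $\delta(\zeta_1,\ldots,\zeta_n)\ge n-1$, hence equality; the only delicate point is legitimising the specialization on the relevant differential rational functions, handled by clearing denominators and using primitive relations.
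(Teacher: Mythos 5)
First, a point of comparison: the paper itself gives no proof of this statement --- it is imported verbatim from the proof of Theorem 3.9 in \cite{LYG} --- so there is no in-paper argument to measure yours against, and I can only assess your proposal on its own terms. Your reduction of the Lemma to the identity $\delta(\zeta_1,\ldots,\zeta_n)=\max_{k_1,\ldots,k_n}\delta(m_{1,k_1},\ldots,m_{n,k_n})$ is clean and correct (the maximum is at most $n-1$, and equals $n-1$ exactly when $\frak{P}$ is Laurent differentially essential), and your treatment of the direction ``essential $\Rightarrow\delta(\zeta_1,\ldots,\zeta_n)=n-1$'' by specializing $\frak{a}_{i,k}\mapsto 0$ for $k\neq k_i$ is essentially the argument used in \cite{LYG}, modulo the care with primitive relations that you yourself flag.

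The gap is in the other direction, i.e.\ in the inequality $\delta(\zeta_1,\ldots,\zeta_n)\leq\max_{k_1,\ldots,k_n}\delta(m_{1,k_1},\ldots,m_{n,k_n})$, which you delegate to a ``Rado-type theorem'' over the Ore skew field $K$. You rightly observe that the classical statement must be re-established in that setting, but the prolongation workaround you offer does not accomplish this. At level $r$ the row of the ordinary Jacobian attached to $\partial^s\zeta_i$ is $-\sum_k\sum_{q\le s}\binom{s}{q}\,\partial^{s-q}\frak{a}_{i,k}\,\mathrm{d}(\partial^{q}m_{i,k})$, so the coefficient matrix is \emph{not} a generic matrix with algebraically independent entries in unrelated positions: the single variable $\partial^{t}\frak{a}_{i,k}$ recurs in every row $(i,s)$ with $s\ge t$, in a diagonal pattern. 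The classical Rado/Cauchy--Binet argument (distinct transversal monomials in independent variables cannot cancel) therefore does not apply as stated, and one has to rule out cancellation among coinciding monomials. Moreover, even granting a rank formula at each finite level $r$, the resulting maximum runs over arbitrary column subsets of the prolonged system, indexed by triples $(i,k,q)$, not over transversals $(k_1,\ldots,k_n)$ of the original system; showing that the optimum may be taken ``diagonal,'' with a single $k_i$ per index $i$ and all of its prolongations, is essentially the differential Rado statement all over again. That step is precisely the nontrivial content of the proof of Theorem 3.9 in \cite{LYG}, and your proposal leaves it open.
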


In the remaining parts of this section, let us assume that $\frak{P}$ is Laurent differentially essential and therefore that $\dres(\frak{P})$ exists. We will use the next result about the order $\omega_i$ of $\dres(\frak{P})$ in $A_i$.


\begin{lem}[\cite{LYG}, Lemma 5.4]\label{lem-ord}
For $i=1,\ldots ,n$, if $\omega_i\geq 0$ then $\omega_i=\ord(\dres(\frak{P}),\frak{a}_{i})$ and $\omega_i=\ord(\dres(\frak{P}),\frak{a}_{i,k})$, $k=1,\ldots ,l_i$.
\end{lem}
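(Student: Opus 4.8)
The inequality $\ord(\dres(\frak{P}),\frak{a})\le\omega_i$ for every $\frak{a}\in A_i$ is immediate, since $\omega_i=\ord(\dres(\frak{P}),A_i)$ is by definition the largest of these orders; the content of the lemma is that this value is attained at \emph{every} coefficient of $\bbF_i$. The plan is to first reduce the statement for a general coefficient $\frak{a}_{i,k}$ to the case of the distinguished coefficient $\frak{a}_i$, and then to settle the distinguished case by a substitution of the generic zero. For the reduction, I would multiply $\bbF_i$ by the Laurent monomial $M_{i,k}^{-1}$, a unit in $\frak{D}\{U^{\pm}\}$; the resulting system $\frak{P}^{(i,k)}$ is again Laurent differentially essential (the monomials of its $i$-th member differ from those of $\bbF_i$ by the fixed factor $M_{i,k}^{-1}$, so the relevant differential transcendence degree is unchanged), generates the same differential ideal, hence $\dres(\frak{P}^{(i,k)})=\pm\dres(\frak{P})$, while the coefficient set $A_i$ and the integer $\omega_i$ are untouched. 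In $\frak{P}^{(i,k)}$ the monomial of $\frak{a}_{i,k}$ is the distinguished one, with a generic zero of the form \eqref{eq-gdzero}. Thus it suffices to prove: \emph{if $\omega_i\ge 0$ then $\ord(\dres(\frak{P}),\frak{a}_i)=\omega_i$}; applying this to each $\frak{P}^{(i,k)}$ then gives $\ord(\dres(\frak{P}),\frak{a}_{i,k})=\omega_i$.

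To prove $\ord(\dres(\frak{P}),\frak{a}_i)=\omega_i$, suppose for contradiction that $h:=\ord(\dres(\frak{P}),\frak{a}_i)<\omega_i$ (possibly $h=-\infty$), and choose $k_0$ with $\ord(\dres(\frak{P}),\frak{a}_{i,k_0})=\omega_i$, so that $v:=\partial^{\omega_i}\frak{a}_{i,k_0}$ occurs in $\dres(\frak{P})$. Since $\dres(\frak{P})\in\sat(\dres(\frak{P}))=[\frak{P}]\cap\frak{D}$, we have $\dres(\frak{P})(\zeta)=0$ in $\bbQ\langle\overline{A},\cY^{\pm}\rangle$. The substitution $\zeta$ replaces $\partial^r\frak{a}_j$ (for $r\le\ord(\dres(\frak{P}),\frak{a}_j)$) by $\partial^r\zeta_j$; by the Leibniz rule $\partial^r\zeta_i$ is a polynomial in the $\partial^s\frak{a}_{i,l}$ with $s\le r\le h<\omega_i$ and in derivatives of the Laurent monomials $M_{i,l}/M_i$, so it never involves $v$, and the $\partial^s\frak{a}_{i,k_0}$ are fixed by $\zeta$. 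Writing $\dres(\frak{P})=\sum_{j=0}^{e}G_j v^{j}$ with $G_e\neq 0$, $e\ge 1$, and each $G_j$ free of $v$, we therefore obtain $\dres(\frak{P})(\zeta)=\sum_j G_j(\zeta)\,v^{j}$ with every $G_j(\zeta)$ again free of $v$. Since $v$ is one of the algebraically independent polynomial generators of the ring in which this identity lives, each coefficient must vanish; in particular $G_e(\zeta)=0$, i.e.\ $G_e\in[\frak{P}]\cap\frak{D}=\sat(\dres(\frak{P}))$.

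It remains to see that $G_e\in\sat(\dres(\frak{P}))$ is impossible. Fix an elimination ranking on $\frak{D}=\bbQ\{A\}$ in which the block $A_i$ is ranked above all other blocks $A_j$ and, within $A_i$, the indeterminate $\frak{a}_{i,k_0}$ is the highest. Then $v=\partial^{\omega_i}\frak{a}_{i,k_0}$ is the leader of $\dres(\frak{P})$, whereas $G_e$, being free of $v$ and of all higher derivatives of $\frak{a}_{i,k_0}$, has leader strictly below $v$; hence $G_e$ is reduced with respect to $\dres(\frak{P})$ and its differential pseudo-remainder equals $G_e\neq 0$, so $G_e\notin\sat(\dres(\frak{P}))$ — a contradiction. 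The main obstacle is precisely this final step: one must invoke (from \cite{LYG}) that $[\frak{P}]\cap\frak{D}$ coincides with $\sat(\dres(\frak{P}))$ for this (or any) elimination ranking, so that membership can be decided by reduction modulo $\dres(\frak{P})$; granting this, the only delicate bookkeeping is the middle paragraph, namely that the substitution $\zeta$ introduces no derivative of $\frak{a}_{i,k_0}$ of order exceeding $h$, so that the variable $v$ genuinely isolates the leading coefficient $G_e$.
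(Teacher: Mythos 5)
The paper does not prove this lemma: it is imported verbatim from \cite{LYG}, Lemma 5.4, so there is no internal proof to compare your argument against. Judged on its own, your argument is correct and is essentially the standard one: reduce an arbitrary coefficient to the distinguished one by multiplying $\bbF_i$ by the unit $M_{i,k}^{-1}$ (which leaves $[\frak{P}]$ --- hence $\dres(\frak{P})$ up to a constant --- and the Laurent differential essentiality condition unchanged, the latter because that condition depends only on the ratios of the monomials of $\bbF_i$), substitute the generic zero $\zeta$ of $[\frak{P}]\cap\frak{D}=\sat(\dres(\frak{P}))$, observe that $\partial^r\zeta_i$ with $r\le h<\omega_i$ cannot create $v=\partial^{\omega_i}\frak{a}_{i,k_0}$ so that the coefficients of the powers of $v$ must vanish separately, and conclude from the fact that the nonzero $G_e$ is reduced with respect to the irreducible $\dres(\frak{P})$ under a ranking making $v$ its leader, hence cannot lie in $\sat(\dres(\frak{P}))$. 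The two points you flag as delicate (that $\sat(\dres(\frak{P}))$ is decided by reduction, and that the substitution introduces no derivative of $\frak{a}_{i,k_0}$ above order $h$) are exactly the right ones and are handled correctly.
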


In \cite{LYG}, order and degree bounds for $\dres(\frak{P})$ were given.
Recall that if $J_i\geq 0$, $i=1,\ldots ,n$ the system $\frak{P}$ is called super essential in Definition \ref{def-superess}. The next theorem follows from this fact and \cite{LYG}, Theorem 5.13. Emphasize that if $\frak{P}$ is super essential then $J_i-\gamma\geq 0$, $i=1,\ldots ,n$ but $\omega_i\geq 0$ is not guaranted.

\begin{thm}\label{thm-obound}
Let $\frak{P}$ be a Laurent differentially essential system. If $\frak{P}$ is super essential then
\begin{equation}\label{eq-ordres}
\omega_i=\ord(\dres(\frak{P}), A_i)\leq J_i-\gamma,\,\,\, i=1,\ldots ,n.
\end{equation}
\end{thm}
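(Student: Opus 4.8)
The statement to prove is that for a Laurent differentially essential, super essential system $\frak{P}$, the order $\omega_i = \ord(\dres(\frak{P}), A_i)$ is bounded above by $J_i - \gamma$. The strategy is to produce, for each fixed $i$, a nonzero differential polynomial $H_i \in [\frak{P}] \cap \frak{D}$ whose order in $A_i$ is at most $J_i - \gamma$, and then invoke Lemma \ref{eq-ordres1}, which gives $\omega_i \leq \ord(H_i, A_i) \leq J_i - \gamma$. The natural candidate for $H_i$ is a differential resultant formula: take the determinant $D_l(\frak{P})$ of a Sylvester matrix $S_l(\frak{P})$ built from $\ags(\frak{P})$, and specialize it via the map $\Xi$ to the coefficients of $\ps(\frak{P})$. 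Since $\frak{P}$ is super essential, Theorem \ref{thm-columns} guarantees that $\ps(\frak{P})$ is a genuine system of $L$ polynomials in $L-1$ algebraic variables, so the construction of Section \ref{sec-sparse algebraic resultant} applies, and by Proposition \ref{prop-D} together with \eqref{eq-XiDl} the specialization $\Xi(D_l(\frak{P}))$ lies in $[\frak{P}] \cap \frak{D}$.

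The core of the argument is the order control. The extended system $\ps(\frak{P}) = \cup_{i=1}^n \frak{P}_i^{[J_i-\gamma]}$ involves, from the $i$th polynomial $\bbF_i$, only the derivatives $\partial^k \bbF_i$ for $k \in [0, J_i-\gamma]$. The coefficients appearing in these are exactly $\partial^k \frak{a}$ for $\frak{a} \in A_i$ and $k \le J_i - \gamma$, i.e. the set $A_i^{[J_i-\gamma]}$ in the notation of \eqref{eq-Aiomega}. Hence every entry of the Sylvester matrix $S_l(\frak{P})$, after specialization by $\Xi$, is a polynomial in $\bbQ[A(\frak{P})] = \bbQ[\cup_i A_i^{[J_i-\gamma]}]$; consequently $\Xi(D_l(\frak{P}))$, being a polynomial expression in those entries, has order in $A_i$ bounded by $J_i - \gamma$. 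Thus if $\Xi(D_l(\frak{P})) \neq 0$ for some $l$, we are done. The remaining difficulty is the possibility that every $\Xi(D_l(\frak{P}))$ vanishes, or that $D_l(\frak{P})$ itself is trivial. For the latter, since $\frak{P}$ is Laurent differentially essential, $\dres(\frak{P})$ is nontrivial, and one should argue (via (A2) and \eqref{eq-SI}) that the associated sparse algebraic resultant $\res(\ags(\frak{P}))$ is nontrivial, so that some $D_l(\frak{P})$ is a genuine nonzero multiple of it. For the former, I would route through Section \ref{sec-differential specialization}: apply Algorithm \ref{alg-specialize} to $Q = D_l(\tilde{\frak{P}})$ with $\tilde{\frak{P}}$ the partially-generic system (generic in the $\frak{a}_i$), which by Theorem \ref{thm-alg} returns a nonzero $H \in [\tilde{\frak{P}}] \cap \tilde{\frak{D}}$; crucially, every step of that algorithm only ever substitutes $\partial^k \frak{a}_i$ for a coefficient $c_l$ with $k \le J_i - \gamma$, and divides out factors of the form $(c - \Xi(c))$, neither of which raises the order in $A_i$ beyond $J_i - \gamma$, so $\ord(H, A_i) \le J_i - \gamma$. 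Specializing the remaining $\frak{a}_i$ generically back (they are themselves differential indeterminates over $\bbQ$ in $\frak{D}$) preserves nonvanishing and the order bound.

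The main obstacle I anticipate is precisely the bookkeeping that the specialization process — whether the one-shot $\Xi$ or the step-by-step Algorithm \ref{alg-specialize} — never introduces derivatives of the $\frak{a}_i$ of order exceeding $J_i - \gamma$. This is essentially immediate from how $\ps(\frak{P})$ is defined, but it needs to be stated carefully because the $c_l$'s (the "distinguished" coefficients) get replaced by $\partial^k \frak{a}_i$ and one must check $k \le J_i - \gamma$ in each case, which is exactly the content of \eqref{eq-Xicl} combined with $\rho(l) = \partial^k \bbF_i$ forcing $k \le J_i - \gamma$ by construction of $\ps(\frak{P})$. A secondary subtlety is matching the case $\omega_i = -\infty$: if some $\ord(H, A_i) = -\infty$ then by Lemma \ref{eq-ordres1} $\omega_i = -\infty \le J_i - \gamma$ trivially, so the inequality \eqref{eq-ordres} holds in the degenerate case as well, and one should note this to make the statement uniform. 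Finally, to connect to the theorem's own phrasing ("follows from \cite{LYG}, Theorem 5.13"), one may alternatively simply cite that result once super essentiality is translated into the hypothesis $J_i \geq 0$ used there, but I would prefer to give the self-contained argument above since all the needed machinery — Theorem \ref{thm-columns}, Proposition \ref{prop-D}, \eqref{eq-XiDl}, Lemma \ref{eq-ordres1}, and Algorithm \ref{alg-specialize} with Theorem \ref{thm-alg} — is already available in the excerpt.
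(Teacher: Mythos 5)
Your argument reproduces, correctly, the content of Remarks \ref{rem-orderbound} and \ref{rem-orderbound2}: when some $\Xi(D_l(\frak{P}))\neq 0$ the bound follows from Lemma \ref{eq-ordres1} and the fact that every coefficient of $\ps(\frak{P})$ lies in $\bbQ[\cup_i A_i^{[J_i-\gamma]}]$, and when $\Xi(D_l(\frak{P}))=0$ but $D_l(\frak{P})\neq 0$ one can route through Algorithm \ref{alg-specialize} and Theorem \ref{thm-alg}, whose output still has all its coefficients among $\cup_i A_i^{[J_i-\gamma]}$. That bookkeeping is fine.

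The gap is in the remaining case, where \emph{every} $D_l(\frak{P})$ vanishes identically. You propose to rule this out by arguing ``via (A2) and \eqref{eq-SI}'' that Laurent differential essentiality of $\frak{P}$ forces the sparse algebraic resultant $\res(\ags(\frak{P}))$ to be nontrivial, whence some $D_l(\frak{P})$ is a nonzero multiple of it. No such implication is available: Laurent differential essentiality is a condition on the differential transcendence degree of the monomial quotients $M_{i,k_i}M_i^{-1}$, whereas (A2) (existence of a unique algebraically essential subsystem) is a condition on the affine lattices spanned by the supports of the \emph{extended} algebraic system $\ags(\frak{P})$, and the paper treats algebraic essentiality of $\ags(\frak{P})$ as an \emph{additional} hypothesis in its later degree-bound theorems. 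Indeed, the paper explicitly lists ``give conditions on $\frak{P}$ so that $D_l(\frak{P})\neq 0$'' as an open problem, so your chain Laurent differentially essential $\Rightarrow$ $\res(\ags(\frak{P}))\neq 1$ $\Rightarrow$ $D_l(\frak{P})\neq 0$ cannot be closed with the tools at hand. The paper's own proof of Theorem \ref{thm-obound} does not go through the matrices at all: it is a direct citation of \cite{LYG}, Theorem 5.13, which bounds the order of $\dres(\frak{P})$ by the Jacobi numbers by a different (non-determinantal) argument; the matrix-based Remarks \ref{rem-orderbound} and \ref{rem-orderbound2} are presented only as alternative derivations \emph{under the extra hypothesis} $D_l(\frak{P})\neq 0$. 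To make your proof complete you would either have to supply that external result or add the hypothesis $D_l(\frak{P})\neq 0$ to the statement.
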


Observe that to obtain the same conclusion, in \cite{LYG}, Corollary 5.11 a much stronger condition on $\frak{P}$ than super essential was demanded (namely rank essential, see \cite{LYG}, Definition 4.20). The same conclusion is obtained later in this section by Remmarks \ref{rem-orderbound} and \ref{rem-orderbound2}.

\para

In this section, we revise the order bounds and we provide degree bounds for the sparse differential resultant of $\frak{P}$ in terms of normalized mixed volumes.
The goal of the next results is to prove Theorem \ref{thm-Qfactor} and its corollaries. They explain under which conditions the nonzero specialization of certain polynomials in the algebraic elimination ideal $(\ags(\frak{P}))\cap\bbQ[\cC]$ are multiples of the sparse differential resultant $\dres(\frak{P})$. Those specializations can then be used to give order and degree bounds of $\dres(\frak{P})$.

Given $\tau=(\tau_1,\ldots ,\tau_n)\in\bbN_{-\infty}^n$ with $\bbN_{-\infty}:=\bbN\cup\{-\infty\}$, let us define $\bbF_i^{[\tau_i]}:=\{\bbF_i,\partial \bbF_i,\ldots ,\partial^{\omega_i}\bbF_i\}$ if $\tau_i\in\bbN$ and $\bbF_i^{[\tau_i]}:=\emptyset$ if $\tau_i=-\infty$, $i=1,\ldots ,n$, $\kappa_j:=\max\{o_{i,j}+\tau_i\mid i=1,\ldots ,n\}-\gamma$, $j=1,\ldots ,n-1$ and
\begin{align*}
&\PS(\tau):=\cup_{i=1}^n \bbF_i^{[\tau_i]},\\
&\cV(\tau):=\{u_{j,k}\mid k\in [\gamma_j,\kappa_j]\cap\bbN,j=1,\ldots ,n-1\}.
\end{align*}
Observe that, if $\tau_i\leq J_i-\gamma$ then $\PS(\tau)\subseteq \ps(\frak{P})$.
Let us consider the algebraic generic system associated to $\PS(\tau)$
\[\cS_{\tau}:=\left\{\sum_{\alpha\in\cA(\bbF)} c_{\alpha}^{\lambda(\bbF)} y^{\alpha}\mid \bbF\in\PS(\tau)\right\}\subseteq \ags(\frak{P}),\]
and assume that $\cS_{\tau}=\{P_{l_1},\ldots ,P_{l_{|\PS(\tau)|}}\}$ with, as in \eqref{eq-Pl},
\begin{equation}\label{eq-Pt}
P_t:=c_t T_t+\sum_{h=1}^{h_t} c_{t,h} T_{t,h},\,\,\, t\in \Lambda(\tau):=\{l_1,\ldots ,l_{|\cS_{\tau}|}\}\subseteq\{1,\ldots ,L\}.
\end{equation}
The sets of algebraic indeterminates over $\bbQ$
\[\cC_{\tau}:=\cup_{t\in\Lambda(\tau)}\cC_{t}=\cup_{t\in\Lambda(\tau)}\{c_t,c_{t,h}\mid h=1,\ldots ,h_t\}\mbox{ and } \overline{\cC}_{\tau}:=\cC_{\tau}\backslash\{c_t\mid t\in\Lambda(\tau)\}\]
together with $\cY(\cS_{\tau}):=\{y\in\cY\mid \upsilon(y)\in\cV(\tau)\}$
describe the ring $\bbQ[\cC_{\tau}][\cY(\cS_{\tau})^{\pm}]$ that contains $\cS_{\tau}$.
As in \eqref{eq-gazero}
\[\epsilon_t:=-\sum_{h=1}^{h_t} c_{t,h}\frac{T_{t,h}}{T_t},\,\,\, t\in\Lambda(\tau).\]

We assume that the specialization map $\Xi:\bbQ[\cC][\cY^{\pm}]\rightarrow\bbQ[A(\frak{P})][\cV(\frak{P})^{\pm}]$ defined in Section \ref{sec-differential specialization} verifies \eqref{eq-Xicl}.

\begin{lem}\label{lem-res}
Let $(\cS_{\tau})$ be the ideal generated by $\cS_{\tau}$ in $\bbQ[\cC_{\tau}][\cY(\cS_{\tau})^{\pm}]$.
\begin{enumerate}
\item $(\cS_{\tau})\cap \bbQ[\cC_{\tau}]$ is a prime ideal with generic zero $\epsilon_{\tau}:=(\overline{\cC}_{\tau};\epsilon_{l_1},\ldots ,\epsilon_{l_{|\cS_{\tau}|}})$.

\item Let $\omega=(\omega_1,\ldots ,\omega_n)$, with $\omega_i=\deg(\dres(\frak{P}),A_i)$. The prime ideal $(\cS_{\omega})\cap \bbQ[\cC_{\omega}]$ has codimension one.
\end{enumerate}
Furthermore, there exists an irreducible polynomial ${\sf R}(\cS_{\omega})$ in $\bbQ[\cC_{\omega}]$ such that
$(\cS_{\omega})\cap \bbQ[\cC_{\omega}]=({\sf R}(\cS_{\omega}))$.
\end{lem}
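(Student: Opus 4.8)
### Proof proposal

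The plan is to prove parts (1) and (2) separately and then derive the final ``furthermore'' clause from part (2) by the standard dictionary between codimension-one prime ideals and principal ideals.

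For part (1), I would imitate the proof of Lemma \ref{lem-prime1} essentially verbatim. One checks that $\epsilon_\tau=(\overline{\cC}_\tau;\epsilon_{l_1},\ldots,\epsilon_{l_{|\cS_\tau|}})$ is a generic zero of $\cI:=(\cS_\tau)\cap\bbQ[\cC_\tau]$: since $P_t(\epsilon_\tau)=\epsilon_t T_t+\sum_h c_{t,h}T_{t,h}=0$ by the very definition of $\epsilon_t$, every element $G=\sum_t\alpha_t P_t\in(\cS_\tau)$ with $G\in\bbQ[\cC_\tau]$ vanishes at $\epsilon_\tau$. Conversely, for $G\in\bbQ[\cC_\tau]$ with $G(\epsilon_\tau)=0$, one clears denominators to get monomials $N_t$ with $N_tP_t\in\bbQ[\cC_\tau][\cY(\cS_\tau)]$, notes that $\{N_tP_t\mid t\in\Lambda(\tau)\}$ is an autoreduced set with the $c_t$ as (distinct) leaders, takes the pseudoremainder $G_0$ of $G$ with respect to this set — which lies in $\bbQ[\overline{\cC}_\tau][\cY(\cS_\tau)]$ because each $N_tP_t$ is linear in its leader $c_t$ — and concludes $G_0=G_0(\epsilon_\tau)=0$, hence $G\in(\cS_\tau)\cap\bbQ[\cC_\tau]$. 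By \cite{Kol}, Chapter $0$, $\S 11$, a polynomial ideal is prime iff it has a generic zero, so $\cI$ is prime with generic zero $\epsilon_\tau$.

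For part (2), take $\tau=\omega=(\omega_1,\ldots,\omega_n)$ with $\omega_i=\ord(\dres(\frak{P}),A_i)$, so that $\PS(\omega)=\cup_i\bbF_i^{[\omega_i]}$. The dimension of $(\cS_\omega)\cap\bbQ[\cC_\omega]$ equals $\trdeg(\bbQ(\epsilon_\omega)/\bbQ)$ by part (1). The key computation is to identify this transcendence degree with $|\cC_\omega|-1$. Here is where I would use the machinery already set up around \eqref{eq-gdzero}: the generic zero $\zeta=(\overline{A};\zeta_1,\ldots,\zeta_n)$ of $[\frak{P}]\cap\frak{D}$ satisfies $\trdeg$-type relations coming from \cite{LYG}, Theorem 3.9, namely the differential transcendence degree of $\bbQ\langle\zeta\rangle/\bbQ$ is $|A|-1$. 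The point is that the coordinates $\epsilon_t$ of $\epsilon_\omega$ are exactly the images, under the specialization sending $c_{t,h}\mapsto \frak{a}$-derivatives, of the numerator/denominator data of the $\zeta_i$: if $\rho(t)=\partial^k\bbF_i$, then $\Xi(c_t)=\partial^k\frak{a}_i$ specializes $\epsilon_t$ to $\partial^k\zeta_i$ (using that $\partial$ commutes with the rational expression defining $\zeta_i$ up to the monomial bookkeeping). Since $\ord(\dres(\frak{P}),A_i)=\omega_i$ and by Lemma \ref{lem-ord} this order is realized simultaneously in $\frak{a}_i$ and every $\frak{a}_{i,k}$, the polynomials $\bbF_i^{[\omega_i]}$ involve precisely the derivatives $\partial^k\frak{a}$, $0\le k\le\omega_i$, of all coefficients of $\bbF_i$ — that is, $\bbQ[\cC_\omega]$ maps onto $\bbQ[A_\omega]:=\bbQ[\cup_i A_i^{[\omega_i]}]$ under $\Xi$, with $|\cC_\omega|=|A_\omega|$ once the leaders $c_t$ are counted. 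Then $\trdeg(\bbQ(\epsilon_\omega)/\bbQ)$ is computed as the ordinary transcendence degree over $\bbQ$ of $\bbQ(\overline{\cC}_\omega)$ adjoined with the $\epsilon_t$, which — after transporting along $\Xi$ to the $\zeta$-side — equals the number of derivatives $\partial^k\frak{a}_{i,k_i}$ that remain algebraically independent, i.e. $|A_\omega|-1$. Hence the codimension is one.

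The ``furthermore'' clause is then routine: a codimension-one prime ideal in the polynomial ring $\bbQ[\cC_\omega]$ over a field is principal (the ring is a UFD, a height-one prime is generated by a single irreducible element); extracting a representative in $\bbZ[\cC_\omega]$ if one wants integer coefficients, and calling it ${\sf R}(\cS_\omega)$, gives $(\cS_\omega)\cap\bbQ[\cC_\omega]=({\sf R}(\cS_\omega))$, consistent with the notation of Remark \ref{rem-R} applied to the subsystem $\cS_\omega$.

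I expect the main obstacle to be part (2), specifically the bookkeeping that translates the differential transcendence-degree statement for $\zeta$ (which lives over $\bbQ$ with a derivation and is $n-1$ in the differential sense) into the purely algebraic statement that $\trdeg(\bbQ(\epsilon_\omega)/\bbQ)=|\cC_\omega|-1$. The subtlety is that one must know the order $\omega_i$ is \emph{exactly} the right truncation level so that no algebraic relations are accidentally introduced or lost when passing from the differential field $\bbQ\langle\zeta\rangle$ to the finitely generated algebraic subextension generated by the $\le\omega_i$-th derivatives; this is precisely what Lemma \ref{lem-ord} and Theorem \ref{thm-obound} are there to guarantee, so the argument should be assembled from those plus the transcendence-degree count in \cite{LYG}, Theorem 3.9. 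The prime-ness in both parts is comparatively mechanical, being a direct transcription of Lemmas \ref{lem-prime1} and \ref{lem-prime}.
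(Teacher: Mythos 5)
Your part (1) and the closing ``furthermore'' step are correct and follow the paper's route (the paper simply says ``analogously to Lemma \ref{lem-prime1}''; you have written out exactly that argument, and the passage from a codimension-one prime in $\bbQ[\cC_{\omega}]$ to a principal ideal is the same as in Remark \ref{rem-R}).

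Part (2), however, has a genuine gap, and it is located exactly where you suspected. You propose to compute $\trdeg(\bbQ(\epsilon_{\omega})/\bbQ)$ by ``transporting along $\Xi$ to the $\zeta$-side'' and invoking the differential transcendence degree count of \cite{LYG}, Theorem 3.9. This fails for two reasons. First, a nonzero algebraic relation $Q(\epsilon_{l_1},\ldots,\epsilon_{l_{|\cS_{\omega}|-1}})=0$ over $\bbQ(\overline{\cC}_{\omega})$ need not survive specialization: $\Xi(Q)$ may well be the zero polynomial, in which case it yields no relation among the $\partial^k\zeta_i$ and no contradiction. Handling this is precisely the role of Algorithm \ref{alg-specialize} and Theorem \ref{thm-alg}, which extract from such a $Q$ a \emph{nonzero} element $H$ of $[\frak{P}]\cap\frak{D}=\sat(\dres(\frak{P}))$; your proposal never invokes them. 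Second, the algebraic independence of the truncated derivative sets $\cup_i\zeta_i^{[\omega_i]}$ (all but one element) is not a direct consequence of the differential transcendence degree being $n-1$: that statement concerns all derivatives and does not by itself locate the finite truncation level at which no relations appear. In the paper this finite-level independence is Lemma \ref{lem-PStau}(2), whose proof is explicitly ``similarly to Lemma \ref{lem-res}(2)'' --- so deriving Lemma \ref{lem-res}(2) from it would be circular.

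The paper's actual argument for (2) is a contradiction argument that you should adopt: order $\cS_{\omega}$ so that the last polynomial specializes to $\partial^{\omega_n}\bbF_n$, hence $c_{l_{|\cS_{\omega}|}}$ specializes to $\partial^{\omega_n}\frak{a}_n$; suppose $\epsilon_{l_1},\ldots,\epsilon_{l_{|\cS_{\omega}|-1}}$ were algebraically dependent over $\bbQ(\overline{\cC}_{\omega})$, so that some nonzero $Q\in\bbQ[\overline{\cC}_{\omega}][c_{l_1},\ldots,c_{l_{|\cS_{\omega}|-1}}]$ lies in the elimination ideal; feed $Q$ to Algorithm \ref{alg-specialize} to obtain a nonzero $H\in\sat(\dres(\frak{P}))$ in which $\partial^{\omega_n}\frak{a}_n$ does not occur, whence $\ord(H,\frak{a}_n)\leq\omega_n-1<\ord(\dres(\frak{P}),\frak{a}_n)=\omega_n$ by Lemma \ref{lem-ord}, contradicting Lemma \ref{eq-ordres1}. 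This gives $\trdeg(\bbQ(\epsilon_{\omega})/\bbQ)=|\cC_{\omega}|-1$ directly, with no appeal to \cite{LYG}, Theorem 3.9. (Also note Theorem \ref{thm-obound}, which you cite as the guarantor of the truncation level, only provides the upper bound $\omega_i\leq J_i-\gamma$; the ingredient actually needed is the minimality of $\omega_i$ expressed by Lemmas \ref{eq-ordres1} and \ref{lem-ord}.)
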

\begin{proof}
\begin{enumerate}
\item Analogously to Lemma \ref{lem-prime1} we can prove that $\epsilon_{\tau}$ is a generic point of $(\cS_{\tau})\cap \bbQ[\cC_{\tau}]$.

\item Let us assume w.l.o.g. that $\omega_n\geq 0$ and $\Xi(P_{l_{|\cS_{\omega}|}})=\partial^{\omega_n} F_n$. By \eqref{eq-Xicl} $\Xi(c_{l_{|\cS_{\omega}|}})=\partial^{\omega_n} \frak{a}_n$. We will prove that $\epsilon_{l_1},\ldots ,\epsilon_{l_{|\cS_{\omega}|-1}}$ are algebraically independent over $\bbQ(\overline{\cC}_{\omega})$. Otherwise, there exists a nonzero polynomial $Q\in \bbQ[\overline{\cC}_{\omega}][c_{l_1},\ldots ,c_{l_{|\cS_{\omega}|-1}}]$ such that $Q(\epsilon_{l_1},\ldots ,\epsilon_{l_{|\cS_{\omega}|-1}})=0$. By definition of generic zero, $Q\in (\cS_{\omega})\cap \bbQ[\cC_{\omega}]$ and, by Theorem \ref{thm-alg}, there exists a nonzero differential polynomial $H\in [\frak{P}]\cap \frak{D}=\sat(\dres(\frak{P}))$ given by Algorithm \ref{alg-specialize}. Observe that $\partial^{\omega_n}\frak{a}_{n}$ cannot appear in $H$, by definition of $Q$. Thus $\ord(H,\frak{a}_{n})\leq \omega_n-1<\ord(\dres(\frak{P}),\frak{a}_{n})=\omega_n$, contradicting that $\dres(\frak{P})$ is the differential resultant, by Lemma \ref{lem-ord}. This proves that \[\trdeg(\bbQ(\epsilon_{\omega})/\bbQ)=|\overline{\cC}_{\omega}|+|\cS_{\omega}|-1=|\cC_{\omega}|-1\]
     and hence $(\cS_{\omega})\cap \bbQ[\cC_{\omega}]$ has codimension one. The conclusion follows as in Remark \ref{rem-R}.
\end{enumerate}
\end{proof}

The second part of the next lemma is part of the proof of \cite{LYG}, Theorem 6.5 and it is used there to bound the degree of $\dres(\frak{P})$.

\begin{lem}\label{lem-PStau}
Let $(\PS(\tau))$ be the ideal generated by $\PS(\tau)$ in  $\bbQ[\Xi(\cC_{\tau})][\cV(\tau)^{\pm}]$.
\begin{enumerate}
\item $(\PS(\tau))\cap \bbQ[\Xi(\cC_{\tau})]$ is a prime ideal with generic zero $\zeta_{\tau}$ given by \eqref{eq-zetatau}.

\item Let $\omega=(\omega_1,\ldots ,\omega_n)$, with $\omega_i=\deg(\dres(\frak{P}),A_i)$. The prime ideal $(\PS(\omega))\cap \bbQ[\Xi(\cC_{\omega})]$ has codimension one and is equal to $(\dres(\frak{P}))$.
\end{enumerate}
\end{lem}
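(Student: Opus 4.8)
Both items are proved in the style of Lemma~\ref{lem-prime1} and Lemma~\ref{lem-res}(2), the one new input being that the differential generic zero $\zeta=(\overline{A};\zeta_1,\ldots,\zeta_n)$ of $[\frak{P}]\cap\frak{D}$ from \eqref{eq-gdzero} restricts to a generic zero of the truncated algebraic ideal. I will use repeatedly that $\bbQ[\Xi(\cC_\tau)]=\bbQ[\cup_{i=1}^nA_i^{[\tau_i]}]$, that each $\partial^k\bbF_i$ has degree $1$ in the coefficient variables $\cup_iA_i^{[\tau_i]}$, that $\partial^{k'}\bbF_i$ involves no $\partial^k\frak{a}$ with $k>k'$ and no variable of $A_{i'}^{[\,\cdot\,]}$ for $i'\neq i$, and that the coefficient of $\partial^{k'}\frak{a}_i$ in $\partial^{k'}\bbF_i$ is the nonzero monomial $M_i$; these facts give the triangular structure that makes the pseudo-reductions below work.

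\emph{(1).} The point $\zeta_\tau$ of \eqref{eq-zetatau} is the tuple obtained from $\zeta$ by retaining the coordinates indexed by $\Xi(\cC_\tau)$ and the indeterminates $\cV(\tau)$: the $\partial^k\frak{a}_{i,h}$ and the $u_{j,k}$ keep their values while each $\partial^k\frak{a}_i$ is assigned $\partial^k\zeta_i$. Since $\zeta$ is a differential point and $\bbF_i(\zeta)=\zeta_iM_i+\sum_h\frak{a}_{i,h}M_{i,h}=0$, one gets $(\partial^k\bbF_i)(\zeta_\tau)=\partial^k\bigl(\bbF_i(\zeta)\bigr)=0$, so every element of $(\PS(\tau))$ vanishes at $\zeta_\tau$. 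Conversely, let $G\in\bbQ[\Xi(\cC_\tau)]$ with $G(\zeta_\tau)=0$. Choose for each $(i,k)$ a monomial $N_{i,k}$ in $\cV(\tau)$ with $N_{i,k}\partial^k\bbF_i\in\bbQ[\Xi(\cC_\tau)][\cV(\tau)]$ and pseudo-reduce $G$ modulo these polynomials, eliminating the variables $\partial^k\frak{a}_i$ block by block in $i$ and, within a block, in order of decreasing $k$; by the triangular structure this is possible and never reintroduces an $\frak{a}_i$-derivative already removed. One obtains
\[MG=\sum_{i,k}\beta_{i,k}\,N_{i,k}\,\partial^k\bbF_i+G_0,\qquad M\ \text{a monomial in }\cV(\tau),\ \ \beta_{i,k}\in\bbQ[\Xi(\cC_\tau)][\cV(\tau)],\]
with $G_0$ free of all the variables $\partial^k\frak{a}_i$, i.e. $G_0\in\bbQ[\{\partial^k\frak{a}_{i,h}\}][\cV(\tau)]$. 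Evaluating at $\zeta_\tau$ gives $G_0(\zeta_\tau)=0$; since $\zeta$ lies in the differential field $\bbQ\langle\overline{A},\cY^{\pm}\rangle$, where $\overline{A}$ and $\cY$ are differentially independent over $\bbQ$, the $\partial^k\frak{a}_{i,h}$ together with the elements of $\cV(\tau)$ are algebraically independent over $\bbQ$, so evaluation at $\zeta_\tau$ is injective on $\bbQ[\{\partial^k\frak{a}_{i,h}\}][\cV(\tau)]$ and hence $G_0=0$. Then $MG\in(\PS(\tau))$, and as $M$ is a unit in $\bbQ[\Xi(\cC_\tau)][\cV(\tau)^{\pm}]$, we get $G\in(\PS(\tau))\cap\bbQ[\Xi(\cC_\tau)]$. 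Thus $\zeta_\tau$ is a generic zero of $(\PS(\tau))\cap\bbQ[\Xi(\cC_\tau)]$, which is therefore prime by \cite{Kol}, Chapter~$0$, $\S 11$.

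\emph{(2).} Write $\cI:=(\PS(\omega))\cap\bbQ[\Xi(\cC_\omega)]$; by (1) it is prime with generic zero $\zeta_\omega$. First, $\cI=\sat(\dres(\frak{P}))\cap\bbQ[\Xi(\cC_\omega)]$: the inclusion $\subseteq$ holds because $(\PS(\omega))\subseteq[\frak{P}]$ and $[\frak{P}]\cap\frak{D}=\sat(\dres(\frak{P}))$, and conversely, if $G\in\sat(\dres(\frak{P}))\cap\bbQ[\Xi(\cC_\omega)]$ then $G$ vanishes at $\zeta$, hence at its restriction $\zeta_\omega$, so $G\in\cI$ by (1). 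Next, $\ord(\dres(\frak{P}),A_i)=\omega_i$ forces $\dres(\frak{P})\in\bbQ[\cup_iA_i^{[\omega_i]}]=\bbQ[\Xi(\cC_\omega)]$. Pick $i_0$ with $\omega_{i_0}\ge 0$ and an elimination ranking putting every derivative of $\frak{a}_{i_0}$ above all other variables; by Lemma~\ref{lem-ord} the leader of $\dres(\frak{P})$ is then $\partial^{\omega_{i_0}}\frak{a}_{i_0}$. Any $G\in\cI$ has $\ord(G,\frak{a}_{i_0})\le\omega_{i_0}$, so its differential remainder w.r.t. $\dres(\frak{P})$ coincides with the algebraic pseudo-remainder in the variable $\partial^{\omega_{i_0}}\frak{a}_{i_0}$; this remainder lies in $\sat(\dres(\frak{P}))$ and is reduced, hence $0$, so $I^sG=Q\,\dres(\frak{P})$ with $I$ the initial of $\dres(\frak{P})$. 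As $\bbQ[\Xi(\cC_\omega)]$ is a UFD, $\dres(\frak{P})$ is irreducible, and $\dres(\frak{P})\nmid I$ (the initial has strictly smaller degree in $\partial^{\omega_{i_0}}\frak{a}_{i_0}$), we conclude $\dres(\frak{P})\mid G$. Hence $\cI=(\dres(\frak{P}))$, which has codimension one.

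\emph{Expected main obstacle.} The crux is part (1): one must check that the labeling of $\ags(\frak{P})$ and the specialization $\Xi$ chosen through \eqref{eq-Xicl} are compatible with pseudo-reduction, i.e. that the distinguished coefficient of $\partial^k\bbF_i$ is indeed $\partial^k\frak{a}_i$, appearing linearly with monomial coefficient $M_i$, so that the elimination can be run in decreasing order of $k$ and the remainder $G_0$ lands in $\bbQ[\{\partial^k\frak{a}_{i,h}\}][\cV(\tau)]$, where the differential independence of $\overline{A}$ and $\cY$ applies. Once (1) is established, (2) is a short pseudo-division argument whose only nontrivial ingredient is the order equality $\ord(\dres(\frak{P}),\frak{a}_i)=\omega_i$ of Lemma~\ref{lem-ord}.
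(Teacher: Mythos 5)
Your proposal is correct, but the two halves relate to the paper differently. For part (1) you follow the same route the paper intends (it merely says ``analogously to Lemma \ref{lem-prime1}''): verify $(\partial^k\bbF_i)(\zeta_\tau)=\partial^k(\bbF_i(\zeta))=0$ for one inclusion, and pseudo-reduce modulo the $N_{i,k}\,\partial^k\bbF_i$ for the converse; your explicit remark that the reduction must be run in decreasing order of $k$ within each block (because $\partial^{k'}\bbF_i$ contains all $\partial^k\frak{a}_i$ with $k\le k'$, so the family is only triangular, not literally autoreduced as in Lemma \ref{lem-prime1}) is a detail the paper glosses over and is worth having. For part (2) you take a genuinely different route. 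The paper first establishes codimension one by a transcendence-degree argument (as in Lemma \ref{lem-res}(2): if $|\PS(\omega)|-1$ of the $\partial^k\zeta_i$ were algebraically dependent one would produce an element of $\sat(\dres(\frak{P}))$ of order $<\omega_n$ in $\frak{a}_n$, contradicting Lemma \ref{lem-ord}), and only then concludes $(\PS(\omega))\cap\bbQ[\Xi(\cC_\omega)]=(\dres(\frak{P}))$ from primality, codimension one, and membership plus irreducibility of $\dres(\frak{P})$. You instead identify the ideal directly with $\sat(\dres(\frak{P}))\cap\bbQ[\Xi(\cC_\omega)]$ via the generic zero, and then show this equals $(\dres(\frak{P}))$ by observing that any $G$ there is already partially reduced (no proper derivatives of the leader $\partial^{\omega_{i_0}}\frak{a}_{i_0}$ occur in $\bbQ[\Xi(\cC_\omega)]$), so the vanishing differential remainder reduces to an algebraic pseudo-division $I^sG=Q\,\dres(\frak{P})$ and divisibility in a UFD; codimension one then comes for free from principality. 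Your argument is more elementary and self-contained (it avoids the dimension count and any appeal to the machinery behind Lemma \ref{lem-res}(2)), at the cost of leaning on the remainder characterization of $\sat(\dres(\frak{P}))$ and Lemma \ref{lem-ord}; the paper's argument is the one that parallels the purely algebraic Lemma \ref{lem-res} and so keeps the two statements structurally symmetric.
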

\begin{proof}
\begin{enumerate}
\item Let $\zeta$ be as in \eqref{eq-gdzero} and $\frak{a}_i$ as in \eqref{eq-Fi}. If $\tau_i\in\bbN$, let us define the sets
\[\zeta_i^{[\tau_i]}:=\{\zeta_i,\partial\zeta_i,\ldots ,\partial^{\tau_i}\zeta_i\} \mbox{ and } \frak{a}_i^{[\tau_i]}:=\{\frak{a}_i,\ldots ,\partial^{\tau_i}\frak{a}_i\},\]
otherwise these sets are defined to be empty. $A_i^{[\tau_i]}$ was defined in \eqref{eq-Aiomega}.
Let $\frak{a}:=\cup_{i=1}^n \frak{a}^{[\tau_i]}$ and observe that
\begin{equation}\label{eq-cupdAi}
\bbQ[\Xi(\cC_{\omega})]=\bbQ[\cup_{i=1}^n A_i^{[\tau_i]}].
\end{equation}
For $\partial^k\bbF_i$ in $\PS(\omega)$ it holds
\[\partial^k\bbF_i(\zeta)=\partial^k\bbF_i(\zeta_i,\partial\zeta_i,\ldots ,\partial^k\zeta_i)=0.\]
Analogously to Lemma \ref{lem-prime1}, it can be proved that $(\PS(\omega))\cap \bbQ[\Xi(\cC_{\omega})]$ is a prime ideal with
\begin{equation}\label{eq-zetatau}
\zeta_{\tau}:=(\cup_{i=1}^n A_i^{[\tau_i]}\backslash\frak{a};\cup_{i=1}^n\zeta_i^{[\tau_i]})
\end{equation}
as a generic zero.

\item Similarly to Lemma \ref{lem-res}, (2) it follows that $|\PS(\omega)|-1$ of the elements in $\cup_{i=1}^n\zeta_i^{[\omega_i]}$ are algebraically independent and therefore this ideal has codimension one.

By Lemma \ref{lem-ord} and \eqref{eq-cupdAi}, $\dres(\frak{P})\in\bbQ[\Xi(\cC_{\omega})]$.
Since $\zeta$ in \eqref{eq-gdzero} is a generic zero of $[\frak{P}]\cap\frak{D}=\sat(\dres(\frak{P}))$, it holds $\dres(\frak{P})(\zeta)=0$, which means that replacing $\partial^k\frak{a}_i$ by $\partial^k\zeta_i$, $k=0,\ldots ,\omega_i$,  $\dres(\frak{P})$ becomes zero (for $i\in\{1,\ldots ,n\}$ with $\omega_i\geq 0$). This implies that $\dres(\frak{P})$ is an irreducible polynomial in $(\PS(\omega))\cap \bbQ[\Xi(\cC_{\omega})]$ and proves the result.
\end{enumerate}
\end{proof}

Observe that
\begin{equation}\label{eq-included}
\Xi((\cS_{\tau})\cap \bbQ[\cC_{\tau}])\subset (\PS(\tau))\cap \Xi(\bbQ[\cC_{\tau}]).
\end{equation}
The next resutl shows that the irreducible polynomials ${\sf R} (\cS_{\omega})$ and $\dres(\frak{P})$, defining respectively algebraic and differential elimination ideals of codimension one, are related through the specialization process.

\begin{prop}\label{prop-dresfactor}
Let $\frak{P}$ be a Laurent differentially essential system and $\omega=(\omega_1,\ldots ,\omega_n)$, with $\omega_i=\ord(\dres(\frak{P}),A_i)$. There exists $\frak{E}\in \bbQ[\Xi(\frak{C})]$ such that $\Xi({\sf R}(\cS_{\omega}))=\frak{E}\dres(\frak{P})$ and
\[\deg(\dres(\frak{P}), A_i^{[\omega_i]})\leq \sum_{k=0}^{\omega_i} \deg({\sf R}(\cS_{\omega}),C_{\lambda(\partial^k \bbF_i)}),\,\,\, i=1,\ldots ,n.\]
\end{prop}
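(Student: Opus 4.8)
The plan is to connect the two codimension-one ideals $(\cS_\omega)\cap\bbQ[\cC_\omega]=({\sf R}(\cS_\omega))$ (Lemma \ref{lem-res}) and $(\PS(\omega))\cap\bbQ[\Xi(\cC_\omega)]=(\dres(\frak P))$ (Lemma \ref{lem-PStau}) via the specialization $\Xi$, and then read off the degree bound from the relation $\Xi({\sf R}(\cS_\omega))=\frak E\,\dres(\frak P)$. First I would observe, using \eqref{eq-included} with $\tau=\omega$, that $\Xi({\sf R}(\cS_\omega))\in(\PS(\omega))\cap\Xi(\bbQ[\cC_\omega])$; since $\PS(\omega)\subseteq\ps(\frak P)$ the coefficients $\Xi(\cC_\omega)$ lie in $\bbQ[A(\frak P)]$, and by \eqref{eq-cupdAi}, $\bbQ[\Xi(\cC_\omega)]=\bbQ[\cup_i A_i^{[\omega_i]}]$, so $\Xi({\sf R}(\cS_\omega))$ is a differential polynomial in the variables $A_i^{[\omega_i]}$. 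By Lemma \ref{lem-PStau}(2), $(\PS(\omega))\cap\bbQ[\Xi(\cC_\omega)]=(\dres(\frak P))$ is principal, hence $\dres(\frak P)$ divides $\Xi({\sf R}(\cS_\omega))$ in $\bbQ[\Xi(\frak C)]$, giving the required factorization $\Xi({\sf R}(\cS_\omega))=\frak E\,\dres(\frak P)$. I should also check that $\Xi({\sf R}(\cS_\omega))\neq 0$: this follows because $\epsilon_\omega$ is a generic zero of $(\cS_\omega)\cap\bbQ[\cC_\omega]$ and, by the argument in Lemma \ref{lem-res}(2) (or by applying Algorithm \ref{alg-specialize} together with Theorem \ref{thm-alg}), the specialization of a nonzero element of this ideal can be taken nonzero; alternatively, since ${\sf R}(\cS_\omega)$ is irreducible and $\Xi$ restricted to the relevant coefficients is compatible with the generic zeros $\epsilon_l\mapsto$ the corresponding Laurent-monomial expressions, $\dres(\frak P)$ being a genuine factor forces $\Xi({\sf R}(\cS_\omega))\ne 0$.

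For the degree inequality, I would argue as follows. The polynomial $P_t$ in \eqref{eq-Pt}, for $t=\lambda(\partial^k\bbF_i)$, is linear in its coefficient set $\cC_t$, and ${\sf R}(\cS_\omega)$ is an element of the ideal generated by the $P_t$, $t\in\Lambda(\omega)$. Grouping the $t\in\Lambda(\omega)$ according to the pair $(i,k)$ with $\rho$-preimage $\partial^k\bbF_i$, the total degree of ${\sf R}(\cS_\omega)$ in the union $\cup_{k=0}^{\omega_i}\cC_{\lambda(\partial^k\bbF_i)}$ is $\sum_{k=0}^{\omega_i}\deg({\sf R}(\cS_\omega),\cC_{\lambda(\partial^k\bbF_i)})$. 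Under $\Xi$, the indeterminates in $\cC_{\lambda(\partial^k\bbF_i)}$ are sent into $\bbQ[\partial^kA_i]\subseteq\bbQ[A_i^{[\omega_i]}]$ (using \eqref{eq-Xicl} for the distinguished coefficient $c_t$ and the definition of $\Xi$ on the remaining coefficients), and a ring homomorphism does not increase total degree. Hence
\[
\deg\bigl(\Xi({\sf R}(\cS_\omega)),\,A_i^{[\omega_i]}\bigr)\le \sum_{k=0}^{\omega_i}\deg\bigl({\sf R}(\cS_\omega),\cC_{\lambda(\partial^k\bbF_i)}\bigr).
\]
Finally, since $\dres(\frak P)$ divides $\Xi({\sf R}(\cS_\omega))$, its degree in $A_i^{[\omega_i]}$ is at most that of $\Xi({\sf R}(\cS_\omega))$, which yields the stated bound.

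The main obstacle I anticipate is the bookkeeping in the degree step: one must be careful that, after $\Xi$, the images of the coefficient sets $\cC_{\lambda(\partial^k\bbF_i)}$ for different $k$ really land in (subsets of) $\bbQ[A_i^{[\omega_i]}]$ and that no cancellation across different $k$ or different derivations causes the naive degree estimate to fail — but since $\Xi$ merely relabels the generic coefficients (sending $c_\alpha^l$ to the differential indeterminate $\frak a_\alpha^i$ or its derivative $\partial^k\frak a_i$ per \eqref{eq-Xicl}), it is in fact a substitution of variables by variables on these generators, so total degrees are preserved rather than merely bounded, and the estimate is clean. A secondary point requiring care is confirming $\frak E\in\bbQ[\Xi(\frak C)]$ rather than in a larger Laurent ring; this follows because both $\Xi({\sf R}(\cS_\omega))$ and $\dres(\frak P)$ lie in $\bbQ[\cup_i A_i^{[\omega_i]}]$, a polynomial (not Laurent) ring, and $\dres(\frak P)$ is irreducible there.
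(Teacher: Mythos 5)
Your proposal follows essentially the same route as the paper's proof: the inclusion \eqref{eq-included} combined with Lemmas \ref{lem-res} and \ref{lem-PStau} gives $\Xi({\sf R}(\cS_{\omega}))\in(\dres(\frak{P}))=(\PS(\omega))\cap\bbQ[\Xi(\cC_{\omega})]$, hence the factorization, and the degree bound then follows because $\Xi$ sends each coefficient set $\cC_{\lambda(\partial^k \bbF_i)}$ to ($\bbZ$-linear combinations of) elements of $A_i^{[\omega_i]}$, so specialization cannot raise the relevant degree. One caveat: your side-argument that $\Xi({\sf R}(\cS_{\omega}))\neq 0$ is not rigorous as written (Algorithm \ref{alg-specialize} only yields a nonzero element of the differential ideal after possibly dividing out vanishing factors, and the ``alternative'' argument presupposes that $\dres(\frak{P})$ is a genuine factor), but the paper's own one-line proof is equally silent on this point, so your write-up is, if anything, more explicit than the original.
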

\begin{proof}
By Lemmas \ref{lem-res}, \ref{lem-PStau} and \eqref{eq-included}
\[\Xi(({\sf R}(\cS_{\omega})))=\Xi((\cS_{\omega})\cap \bbQ[\cC_{\omega}])\subset (\dres(\frak{P}))=(\PS(\omega))\cap \bbQ[\Xi(\cC_{\omega})].\]
Thus $\Xi({\sf R}(\cS_{\omega}))=\frak{E}\dres(\frak{P})$, with $\frak{E}\in \bbQ[\Xi(\cC_{\omega})]$, which implies
the inequality.
\end{proof}

Since a priori we do not know the value of the orders $\omega_i$, we can use the differential resultant formulas $D_l(\frak{P})$, $l=1,\ldots ,L$ and Algorithm \ref{alg-specialize} to get new upper bounds of $\omega_i$.

\begin{rem}\label{rem-orderbound}
Assuming $\frak{P}$ to be super esential, to compute $D_l(\frak{P})$. If $\Xi(D_l(\frak{P}))\neq 0$ then, by \eqref{eq-XiDl}, it belongs to $[\frak{P}]\cap\frak{D}$. Thus, by Lemma \ref{eq-ordres1} and construction of $D_l(\frak{P})$
\[\omega_i\leq \ord(\dres(\frak{P}),A_i)\leq \ord(\Xi(D_l(\frak{P})),A_i)\leq J_i-\gamma,\,\,\, i=1,\ldots ,n,\]
which proves Theorem \ref{thm-obound} if $\Xi(D_l(\frak{P}))\neq 0$.
\end{rem}

In the remaining parts of this section, let us assume that $\frak{P}$ is super essential, to construct $D_l(\frak{P})$, $l=1,\ldots ,L$.
Given $l\in\{1,\ldots ,L\}$, let us assume that $D_l(\frak{P})\neq 0$ and $D_l(\frak{P})=Q_1\cdot\ldots\cdot Q_r$ as a product of irreducible factors in $\bbQ[\cC]$. By Proposition \ref{prop-D}, $D_l(\frak{P})\in (\ags(\frak{P}))\cap\bbQ [\cC]$, which by Lemma \ref{lem-prime1} has $\epsilon$ as a generic zero. Let
\[\cQ=\{Q\in\{Q_1,\ldots ,Q_r\}\mid Q(\epsilon)=0\},\]
which is nonempty because $(\ags(\frak{P}))\cap\bbQ [\cC]$ is prime by Lemma \ref{lem-prime1}.

\begin{lem}\label{lem-tau}
Given $Q\in\cQ$, there exists a unique $\tau^Q\in\bbN_{-\infty}^n$ such that $Q\in\bbQ[\cC_{\tau^Q}]$ and $\cC_{\tau^Q}\subset\cC_{\tau}$, for each $\tau\in\bbN_{-\infty}^n$ such that $Q\in\bbQ[\cC_{\tau}]$.
\end{lem}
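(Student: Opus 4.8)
The plan is to reduce the statement to the disjoint block structure of the coefficient set $\cC$. Since $\rho$ is a bijection, each index $l\in\{1,\ldots,L\}$ corresponds to a unique pair $(i,k)$ with $\rho(l)=\partial^k\bbF_i$, where $1\le i\le n$ and $0\le k\le J_i-\gamma$, and the generic coefficient sets $\cC_l=\{c^l_\alpha\mid\alpha\in\cA(\rho(l))\}$ are pairwise disjoint (being distinguished by the superscript), each nonempty because $\partial^k\bbF_i\neq 0$ and so $\cA(\partial^k\bbF_i)\neq\emptyset$. Writing $\cC_{(i,k)}$ for $\cC_{\lambda(\partial^k\bbF_i)}$, and letting $\tau$ range over the tuples for which $\cC_\tau$ is defined (i.e. $\tau_i\le J_i-\gamma$ for all $i$, so that $\cS_\tau\subseteq\ags(\frak{P})$), one has $\Lambda(\tau)=\{\lambda(\partial^k\bbF_i)\mid 1\le i\le n,\ 0\le k\le\tau_i\}$ and hence the disjoint decompositions
\[\cC=\bigsqcup_{i=1}^{n}\ \bigsqcup_{k=0}^{J_i-\gamma}\cC_{(i,k)},\qquad \cC_\tau=\bigsqcup_{i=1}^{n}\ \bigsqcup_{k=0}^{\tau_i}\cC_{(i,k)},\]
where the inner union is empty when $\tau_i=-\infty$. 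Consequently, for such $\tau$, the membership $Q\in\bbQ[\cC_\tau]$ is equivalent to $V(Q)\subseteq\cC_\tau$, where $V(Q)\subseteq\cC$ denotes the finite set of indeterminates of $\cC$ appearing in $Q$ (it is contained in $\cC$ since $Q$ is an irreducible factor of $D_l(\frak{P})$ in $\bbQ[\cC]$).

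I would then define the candidate tuple directly from $V(Q)$. Each $c\in V(Q)$ lies in exactly one block $\cC_{(i,k)}$; set
\[\tau^Q_i:=\max\Bigl(\{-\infty\}\cup\bigl\{k\in\bbN \ \big|\ V(Q)\cap\cC_{(i,k)}\neq\emptyset\bigr\}\Bigr),\qquad i=1,\ldots,n.\]
Since $V(Q)\subseteq\cC$, each such $k$ satisfies $k\le J_i-\gamma$, so $\tau^Q\in\bbN_{-\infty}^n$ and $\cC_{\tau^Q}$ is defined. By construction every $c\in V(Q)$ lies in some $\cC_{(i,k)}$ with $k\le\tau^Q_i$, hence $V(Q)\subseteq\cC_{\tau^Q}$ and $Q\in\bbQ[\cC_{\tau^Q}]$.

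The minimality is where the disjointness is used. Suppose $Q\in\bbQ[\cC_\tau]$, i.e. $V(Q)\subseteq\cC_\tau$. For $i$ with $\tau^Q_i=-\infty$ there is nothing to check; for $i$ with $\tau^Q_i=m\in\bbN$, pick $c\in V(Q)\cap\cC_{(i,m)}$. Since $c\in\cC_\tau=\bigsqcup_{i'}\bigsqcup_{k\le\tau_{i'}}\cC_{(i',k)}$ and the blocks $\cC_{(i',k)}$ are pairwise disjoint, the block $\cC_{(i,m)}$ must occur among those forming $\cC_\tau$, forcing $m\le\tau_i$. Hence $\tau^Q\le\tau$ componentwise, so $\cC_{\tau^Q}\subseteq\cC_\tau$ by the displayed decomposition; this is the inclusion asserted in the lemma (an equality when $\tau=\tau^Q$). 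Uniqueness follows because $\tau\mapsto\cC_\tau$ is injective on the admissible tuples: from $\cC_\tau$ one recovers $\tau_i$ as the largest $k$ with $\cC_{(i,k)}\subseteq\cC_\tau$, or $-\infty$ if no such $k$ exists, each block being nonempty. Thus if $\tau'$ also has the two properties, then $\cC_{\tau^Q}\subseteq\cC_{\tau'}\subseteq\cC_{\tau^Q}$, so $\cC_{\tau'}=\cC_{\tau^Q}$ and hence $\tau'=\tau^Q$.

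I do not anticipate a genuine obstacle; the only things needing care are the bookkeeping of the $\tau_i=-\infty$ convention (empty blocks) and stating the pairwise disjointness of the $\cC_l$ explicitly up front, since it is precisely this disjointness that upgrades the existence of \emph{a} minimal $\tau$ to the existence of \emph{the} least one. It is also worth noting that the hypothesis $Q\in\cQ$, i.e. $Q(\epsilon)=0$, is not needed for this lemma---only that $Q$ is an irreducible factor of $D_l(\frak{P})$ in $\bbQ[\cC]$.
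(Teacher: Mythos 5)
Your proof is correct and follows essentially the same route as the paper's: the paper simply defines $\tau^Q_i:=\min\{\gamma_i\mid\gamma\in\Gamma\}$ over the nonempty set $\Gamma$ of admissible tuples, which is exactly the componentwise minimum that your support-based construction computes. The one point you add that the paper leaves implicit is the verification, via the pairwise disjointness of the blocks $\cC_{\lambda(\partial^k\bbF_i)}$, that this componentwise minimum is itself attained (i.e.\ that $Q\in\bbQ[\cC_{\tau^Q}]$); your remark that the hypothesis $Q(\epsilon)=0$ is never used is also accurate.
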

\begin{proof}
Given $J=(J_1-\gamma,\ldots ,J_n-\gamma)\in\bbN^n$, $\cC=\cC_{J}$. Thus the next set is nonempty
\[\Gamma=\{\gamma=(\gamma_1,\ldots ,\gamma_n)\in\bbN_{-\infty}^n\mid Q\in\bbQ[\cC_{\gamma}]\}.\]
Then $\tau^Q=(\tau_1,\ldots ,\tau_n)$, with $\tau_i:=\min\{\gamma_i\mid \gamma\in\Gamma\}$.
\end{proof}

By Lemma \ref{lem-res}, $(\cS_{\tau^Q})\cap\bbQ[\cC_{\tau^Q}]$ is a prime ideal with generic zero $\epsilon_{\tau^Q}$. The facts that $Q\in\bbQ[\cC_{\tau^Q}]$ and $Q(\epsilon)=0$, imply that $Q(\epsilon_{\tau^Q})=Q(\epsilon)=0$. Hence $Q$ is an irreducible polynomial in $(\cS_{\tau^Q})\cap\bbQ[\cC_{\tau^Q}]$ and if this ideal has codimension one, it holds
\begin{equation}\label{eq-RStauQ}
(\cS_{\tau^Q})\cap\bbQ[\cC_{\tau^Q}]=({\sf R}(\cS_{\tau^Q}))=(Q).
\end{equation}

\begin{lem}\label{lem-ordQ}
Given $Q\in\cQ$, with $\tau^Q=(\tau_1,\ldots ,\tau_n)$ if $\Xi(Q)\neq 0$ then
\[\omega_i\leq \ord(\Xi(Q),A_i)\leq \tau_i \leq J_i-\gamma,\,\,\, i=1,\ldots ,n.\]
In particular, if $\tau_i=-\infty$ then $\omega_i=-\infty$.
\end{lem}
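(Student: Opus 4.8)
The plan is to deduce Lemma~\ref{lem-ordQ} from the structural facts already assembled, chiefly Lemma~\ref{lem-res}, Lemma~\ref{lem-PStau}, the inclusion \eqref{eq-included}, and the specialization machinery of Algorithm~\ref{alg-specialize} together with Theorem~\ref{thm-alg}. First I would recall that, by construction of $\tau^Q$ in Lemma~\ref{lem-tau}, we have $Q\in\bbQ[\cC_{\tau^Q}]$ and $Q\in\cQ$ means $Q(\epsilon)=0$; since $\cC_{\tau^Q}\subseteq\cC$ and the generic zero $\epsilon_{\tau^Q}$ of $(\cS_{\tau^Q})\cap\bbQ[\cC_{\tau^Q}]$ is obtained from $\epsilon$ by restriction, it follows that $Q(\epsilon_{\tau^Q})=0$, so $Q\in(\cS_{\tau^Q})\cap\bbQ[\cC_{\tau^Q}]$. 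Applying $\Xi$ and using \eqref{eq-included}, we get $\Xi(Q)\in(\PS(\tau^Q))\cap\Xi(\bbQ[\cC_{\tau^Q}])\subseteq\bbQ[\Xi(\cC_{\tau^Q})]=\bbQ[\cup_{i=1}^n A_i^{[\tau_i]}]$ by \eqref{eq-cupdAi}. Hence $\ord(\Xi(Q),A_i)\leq\tau_i$ for each $i$, which is the middle inequality; the rightmost inequality $\tau_i\leq J_i-\gamma$ is immediate from $\cC_{\tau^Q}\subseteq\cC=\cC_J$ and the minimality in Lemma~\ref{lem-tau}.

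Next I would establish the leftmost inequality $\omega_i\leq\ord(\Xi(Q),A_i)$. Here the key point is that if $\Xi(Q)\neq 0$, then since $Q\in(\ags(\frak{P}))\cap\bbQ[\cC]$ (by membership in $(\cS_{\tau^Q})\cap\bbQ[\cC_{\tau^Q}]\subseteq(\ags(\frak{P}))\cap\bbQ[\cC]$), we have $\Xi(Q)\in[\frak{P}]\cap\frak{D}$ by the same argument as in \eqref{eq-XiDl}. Then Lemma~\ref{eq-ordres1}, applied to $H=\Xi(Q)$, gives $\omega_i=\ord(\dres(\frak{P}),A_i)\leq\ord(\Xi(Q),A_i)$ for $i=1,\ldots,n$. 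Chaining the three inequalities yields $\omega_i\leq\ord(\Xi(Q),A_i)\leq\tau_i\leq J_i-\gamma$.

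For the last assertion, suppose $\tau_i=-\infty$ for some $i$. Then $A_i^{[\tau_i]}=\emptyset$ by \eqref{eq-Aiomega}, so $\Xi(Q)$ involves no variable from $A_i$ or its derivatives, i.e.\ $\ord(\Xi(Q),A_i)=-\infty$. Since $\Xi(Q)\neq 0$ and $\Xi(Q)\in[\frak{P}]\cap\frak{D}=\sat(\dres(\frak{P}))$, the order estimate $\omega_i\leq\ord(\Xi(Q),A_i)=-\infty$ forces $\omega_i=-\infty$; equivalently this is exactly the case $\ord(H,A_i)=-\infty$ in the proof of Lemma~\ref{eq-ordres1}, which already records that then $\ord(\dres(\frak{P}),A_i)=-\infty$.

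The main obstacle I anticipate is not any single inequality but making the restriction-of-generic-zero step fully rigorous: one must check that the generic zero $\epsilon_{\tau^Q}$ of $(\cS_{\tau^Q})\cap\bbQ[\cC_{\tau^Q}]$ genuinely agrees with the restriction of $\epsilon$ to the coordinates in $\cC_{\tau^Q}$, so that $Q(\epsilon)=0$ transfers to $Q(\epsilon_{\tau^Q})=0$. This is clear from the explicit formulas $\epsilon_t=-\sum_h c_{t,h}T_{t,h}/T_t$, which depend only on the coefficients in $\cC_t$, but it is the hinge on which the whole argument turns, together with the hypothesis $\Xi(Q)\neq 0$, which is what licenses the application of Lemma~\ref{eq-ordres1} in the first place.
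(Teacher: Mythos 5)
Your proposal is correct and follows essentially the same route as the paper: the middle inequality from $Q\in(\cS_{\tau^Q})\cap\bbQ[\cC_{\tau^Q}]$ (established via $Q(\epsilon_{\tau^Q})=Q(\epsilon)=0$, exactly as in the paragraph preceding the lemma) and the inclusion \eqref{eq-included}, the right inequality from the minimality of $\tau^Q$ in Lemma~\ref{lem-tau}, and the left inequality from Lemma~\ref{eq-ordres1} applied to $\Xi(Q)\in[\frak{P}]\cap\frak{D}$. Your version merely spells out the steps the paper compresses into ``it is also easy to see that,'' and your handling of the $\tau_i=-\infty$ case is the intended one.
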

\begin{proof}
Observe that $Q\in\bbQ[\cC]$ implies $\tau_i\leq J_i-\gamma$. It is also easy to see that $Q\in(\cS_{\tau^Q})\cap \bbQ[\cC_{\tau^Q}]$ implies $\Xi(Q)\in (\PS(\tau^Q))\cap\bbQ[\Xi(\cC_{\tau^Q})]$ and if $\Xi(Q)\neq 0$ then $\ord(\Xi(Q),A_i)\leq \tau_i$. The first inequality follows by Lemma \ref{eq-ordres1}.
\end{proof}

We are ready now to prove the main result of this section.

\begin{thm}\label{thm-Qfactor}
Let $\frak{P}$ be a Laurent differentially essential and super essential system. Let us suppose that there exists $Q\in\cQ$ such that $(\cS_{\tau^Q})\cap\bbQ[\cC_{\tau^Q}]$ has codimension one. If $\Xi(Q)\neq 0$ then $\Xi(Q)=\frak{E} \dres(\frak{P})$, with $\frak{E}\in \frak{D}$.
\end{thm}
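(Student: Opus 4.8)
The plan is to combine the structural results already established in Lemmas \ref{lem-res}, \ref{lem-PStau} and \ref{lem-ordQ} with Proposition \ref{prop-dresfactor}, reducing the statement to the case $\tau^Q=\omega$. First I would observe that, since $\Xi(Q)\neq 0$, Lemma \ref{lem-ordQ} gives $\omega_i\leq \ord(\Xi(Q),A_i)\leq\tau_i$ for all $i$. This means $\PS(\omega)\subseteq \PS(\tau^Q)$ and $\cS_\omega\subseteq\cS_{\tau^Q}$, so $\cC_\omega\subseteq\cC_{\tau^Q}$. Next I would use the hypothesis that $(\cS_{\tau^Q})\cap\bbQ[\cC_{\tau^Q}]$ has codimension one, so that by \eqref{eq-RStauQ} we have $(\cS_{\tau^Q})\cap\bbQ[\cC_{\tau^Q}]=({\sf R}(\cS_{\tau^Q}))=(Q)$, with $Q$ irreducible.

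The core of the argument is to push $Q$ through the specialization map $\Xi$. As in \eqref{eq-included}, $\Xi((\cS_{\tau^Q})\cap\bbQ[\cC_{\tau^Q}])\subseteq (\PS(\tau^Q))\cap\Xi(\bbQ[\cC_{\tau^Q}])$, and by Lemma \ref{lem-PStau}(1) the right-hand side sits inside the prime ideal with generic zero $\zeta_{\tau^Q}$. Since $\dres(\frak{P})$ vanishes at $\zeta$ (it lies in $\sat(\dres(\frak{P}))=[\frak{P}]\cap\frak{D}$ and $\zeta$ is the generic zero of that saturated ideal), and since by Lemma \ref{lem-ord} together with $\tau_i\geq\omega_i$ the polynomial $\dres(\frak{P})$ only involves the variables in $\cup_i A_i^{[\omega_i]}\subseteq\cup_i A_i^{[\tau_i]}$, the element $\dres(\frak{P})$ lies in $(\PS(\tau^Q))\cap\Xi(\bbQ[\cC_{\tau^Q}])$ as well. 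The plan is then to show that this last ideal is, up to the obvious extension of scalars, principal generated by $\dres(\frak{P})$: indeed $\PS(\tau^Q)\supseteq\PS(\omega)$, and the extra polynomials $\partial^k\bbF_i$ with $\omega_i<k\leq\tau_i$ introduce new algebraically independent coefficients $\partial^k\frak{a}_i$ (as in the codimension-one argument of Lemma \ref{lem-res}(2)), so passing from $\PS(\omega)$ to $\PS(\tau^Q)$ does not change the elimination ideal apart from base field. Hence $\Xi(Q)\in(\dres(\frak{P}))$ inside the relevant polynomial ring, giving $\Xi(Q)=\frak{E}\,\dres(\frak{P})$ for some $\frak{E}\in\frak{D}$.

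The step I expect to be the main obstacle is the last one: verifying carefully that $\dres(\frak{P})$ actually \emph{generates} $(\PS(\tau^Q))\cap\Xi(\bbQ[\cC_{\tau^Q}])$ rather than merely belonging to it, i.e. that enlarging $\omega$ to $\tau^Q$ does not pick up extra relations among the coefficients. This requires knowing that $\dres(\frak{P})$ is irreducible (true, since it is the sparse differential resultant) together with a dimension count showing $(\PS(\tau^Q))\cap\Xi(\bbQ[\cC_{\tau^Q}])$ still has codimension one over the enlarged base; the generic zero $\zeta_{\tau^Q}$ from Lemma \ref{lem-PStau}(1) and the independence of the new $\partial^k\zeta_i$ supply this. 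Alternatively, one can sidestep the general case: because $\Xi(Q)$ involves no $A_i$-variable of order exceeding $\tau_i$, but in fact by Lemma \ref{lem-ordQ} no order exceeding is forced and by minimality of $\tau^Q$ one can reduce to $\tau^Q=\omega$, at which point Proposition \ref{prop-dresfactor} applies verbatim with ${\sf R}(\cS_\omega)=Q$ and yields $\Xi(Q)=\Xi({\sf R}(\cS_\omega))=\frak{E}\,\dres(\frak{P})$. I would present the argument in this second, cleaner form, using Lemma \ref{lem-ordQ} to justify the reduction $\tau^Q=\omega$ and then invoking Proposition \ref{prop-dresfactor} directly.
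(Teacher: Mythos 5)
Your overall skeleton (Lemma \ref{lem-ordQ} to get $\omega_i\leq\tau_i$, the codimension-one hypothesis to get $(\cS_{\tau^Q})\cap\bbQ[\cC_{\tau^Q}]=(Q)$, and Proposition \ref{prop-dresfactor} to handle the specialization) matches the paper's, but the step that actually identifies $Q$ with ${\sf R}(\cS_{\omega})$ is missing from both routes you describe. In your ``cleaner'' second form you claim that Lemma \ref{lem-ordQ} together with the minimality of $\tau^Q$ reduces matters to $\tau^Q=\omega$. It does not: Lemma \ref{lem-ordQ} gives only the one-sided inequality $\omega_i\leq\tau_i$, and the minimality in Lemma \ref{lem-tau} concerns which variables of $\cC$ occur in $Q$ \emph{before} specialization; neither yields $\tau_i\leq\omega_i$. (The equality $\tau^Q=\omega$ is true, but only \emph{a posteriori}, once one knows $Q\in\bbQ[\cC_{\omega}]$ --- which is essentially what has to be proved.) In your first route the gap is the assertion that $(\PS(\tau^Q))\cap\bbQ[\Xi(\cC_{\tau^Q})]$ is principal generated by $\dres(\frak{P})$. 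Lemma \ref{lem-PStau}(2) establishes this only for $\tau=\omega$, and for larger $\tau$ it can fail: if some $\tau_i>\omega_i$, the specialized elimination ideal may contain $\partial\dres(\frak{P})$ and further consequences of differentiating the single relation among the $\zeta_i^{[\omega_i]}$, so its codimension need not be one and membership in it does not imply divisibility by $\dres(\frak{P})$. Your claim that the new coefficients ``do not change the elimination ideal apart from base field'' is exactly the point that breaks down.

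The repair is to make the comparison \emph{before} specializing, at the level of the generic algebraic ideals: since $\omega_i\leq\tau_i$ one has $\cS_{\omega}\subseteq\cS_{\tau^Q}$ and $\bbQ[\cC_{\omega}]\subseteq\bbQ[\cC_{\tau^Q}]$, hence
\[({\sf R}(\cS_{\omega}))=(\cS_{\omega})\cap\bbQ[\cC_{\omega}]\subseteq(\cS_{\tau^Q})\cap\bbQ[\cC_{\tau^Q}]=(Q).\]
Thus $Q$ divides the irreducible polynomial ${\sf R}(\cS_{\omega})$, and since $Q$ is a nonunit this forces $Q=\alpha\,{\sf R}(\cS_{\omega})$ with $\alpha\in\bbQ$. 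Only then does one specialize, invoking Proposition \ref{prop-dresfactor} to write $\Xi({\sf R}(\cS_{\omega}))=\frak{E}_1\dres(\frak{P})$ and conclude $\Xi(Q)=\alpha\frak{E}_1\dres(\frak{P})$. This is the paper's argument; it uses the irreducibility of ${\sf R}(\cS_{\omega})$ guaranteed by Lemma \ref{lem-res}, which your proposal never invokes.
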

\begin{proof}
Let $\tau^Q=(\tau_1,\ldots ,\tau_n)$, by Lemma \ref{lem-ordQ}, $\omega_i\leq \tau_i$, $i=1,\ldots ,n$ implies
\[({\sf R}(\cS_{\omega}))=(\cS_{\omega})\cap \bbQ[\cC_{\omega}]\subseteq (\cS_{\tau^Q})\cap\bbQ[\cC_{\tau^Q}]=(Q).\]
Since $Q$ is irreducible, $Q=\alpha {\sf R}(\cS_{\omega})$, $\alpha\in \bbQ$.
By Proposition \ref{prop-dresfactor}, $\Xi({\sf R}(\cS_{\omega}))=\frak{E}_1 \dres(\frak{P})$, for some $\frak{E}_1\in\frak{D}$.
Thus
\[\Xi(Q)=\alpha \Xi({\sf R}(\cS_{\omega}))=\alpha \frak{E}_1 \dres(\frak{P}),\]
with $\frak{E}=\alpha \frak{E}_1$ in $\frak{D}$.
\end{proof}

It would be stronger to replace the assumption $\Xi(Q)\neq 0$ by $\Xi(D_l(\frak{P}))\neq 0$.

\begin{cor}\label{cor-dresFactor}
Let $\frak{P}$ be a Laurent differentially essential and super essential system. Let us suppose that there exists $Q\in\cQ$ such that $(\cS_{\tau^Q})\cap\bbQ[\cC_{\tau^Q}]$ has codimension one. If $\Xi(D_l(\frak{P}))\neq 0$ then $\Xi(D_l(\frak{P}))=\frak{E} \dres(\frak{P})$, with $\frak{E}\in \frak{D}$.
\end{cor}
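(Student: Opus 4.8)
The plan is to deduce this directly from Theorem \ref{thm-Qfactor}, using the factorization of $D_l(\frak{P})$ into irreducibles together with the multiplicativity of the specialization $\Xi$ and the fact that its target ring has no zero divisors. First I would recall, as in the paragraph preceding Lemma \ref{lem-tau}, that since $D_l(\frak{P})\neq 0$ we may write $D_l(\frak{P})=Q_1\cdots Q_r$ as a product of irreducible factors in $\bbQ[\cC]$, and that $\cQ\subseteq\{Q_1,\ldots,Q_r\}$. By hypothesis there is some $Q\in\cQ$ for which $(\cS_{\tau^Q})\cap\bbQ[\cC_{\tau^Q}]$ has codimension one; this $Q$ is thus one of the irreducible factors, say $Q=Q_{i_0}$.

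Next I would observe that $\Xi(D_l(\frak{P}))=\Xi(Q_1)\cdots\Xi(Q_r)$ lies in $\bbQ[A(\frak{P})]$, which is a polynomial ring over $\bbQ$ in the indeterminates $A(\frak{P})$ and hence an integral domain (and sits inside the domain $\bbQ[A(\frak{P})][\cV(\frak{P})^{\pm}]$). Therefore the assumption $\Xi(D_l(\frak{P}))\neq 0$ forces $\Xi(Q_i)\neq 0$ for every $i$, in particular $\Xi(Q)\neq 0$. Having secured this, I would invoke Theorem \ref{thm-Qfactor}: $\frak{P}$ is Laurent differentially essential and super essential, $Q\in\cQ$, the ideal $(\cS_{\tau^Q})\cap\bbQ[\cC_{\tau^Q}]$ has codimension one, and $\Xi(Q)\neq 0$, so the theorem gives $\Xi(Q)=\frak{E}_0\,\dres(\frak{P})$ for some $\frak{E}_0\in\frak{D}$.

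Finally I would reassemble the product:
\[\Xi(D_l(\frak{P}))=\Xi(Q)\prod_{i\neq i_0}\Xi(Q_i)=\Bigl(\frak{E}_0\prod_{i\neq i_0}\Xi(Q_i)\Bigr)\dres(\frak{P}),\]
and set $\frak{E}:=\frak{E}_0\prod_{i\neq i_0}\Xi(Q_i)$. Since $\frak{E}_0\in\frak{D}$ and each $\Xi(Q_i)\in\bbQ[A(\frak{P})]\subseteq\bbQ\{A\}=\frak{D}$, we get $\frak{E}\in\frak{D}$, which is the claimed conclusion $\Xi(D_l(\frak{P}))=\frak{E}\,\dres(\frak{P})$.

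The statement is essentially a corollary of Theorem \ref{thm-Qfactor}, so I do not expect a substantial obstacle. The one point that deserves care is the integral-domain argument: because the codomain $\bbQ[A(\frak{P})][\cV(\frak{P})^{\pm}]$ of $\Xi$ has no zero divisors, non-vanishing of the product $\Xi(D_l(\frak{P}))$ propagates to every irreducible factor $\Xi(Q_i)$, and in particular to the distinguished factor $Q$ to which Theorem \ref{thm-Qfactor} applies. This is precisely the strengthening of the hypothesis ``$\Xi(Q)\neq 0$'' to ``$\Xi(D_l(\frak{P}))\neq 0$'' flagged in the remark immediately before the corollary.
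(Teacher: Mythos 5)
Your proposal is correct and follows essentially the same route as the paper: the paper writes $D_l(\frak{P})=Q'\cdot Q$ with $Q'$ the product of the remaining irreducible factors, applies Theorem \ref{thm-Qfactor} to $Q$, and sets $\frak{E}=\Xi(Q')\frak{E}_1$. Your only addition is the explicit observation that $\Xi(D_l(\frak{P}))\neq 0$ forces $\Xi(Q)\neq 0$ (which the paper leaves implicit, and which in fact needs no integral-domain hypothesis, since a product with a zero factor is zero in any ring); this is a harmless and slightly more careful presentation of the same argument.
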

\begin{proof}
Since $D_l(\frak{P})=Q'\cdot Q$, with $Q'\in\bbQ[\cC]$, by Theorem \ref{thm-Qfactor}, we get
\[\Xi(D_l(\frak{P}))=\Xi(Q')\Xi(Q)=\Xi(Q')\frak{E}_1 \dres(\frak{P}),\]
with $\frak{E}=\Xi(Q')\frak{E}_1$ in $\frak{D}$.
\end{proof}

As in Section \ref{sec-sparse algebraic resultant}, let $\res(\cS)$ be the sparse algebraic resultant of the sparse algebraic generic system $\cS=\ags(\frak{P})$. Even stronger than the assumption of the previous corollary is to assume that $\res(\cS)$ is nontrivial and $\Xi(\res(\cS))\neq 0$.

\begin{cor}\label{cor-XresFactor}
Let $\frak{P}$ be a Laurent differentially essential and super essential system. Let us suppose that $\res(\cS)$ is nontrivial. If $\Xi(\res(\cS))\neq 0$ then $\Xi(\res(\cS))=\frak{E} \dres(\frak{P})$, with $\frak{E}\in \frak{D}$.
\end{cor}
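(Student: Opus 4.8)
The plan is to obtain the statement as the special case $Q=\res(\cS)$ of Theorem \ref{thm-Qfactor}; the remaining hypotheses of that theorem ($\frak{P}$ Laurent differentially essential and super essential) are part of the assumptions, so I only need to check that $\res(\cS)$ is an admissible $Q\in\cQ$ and that $(\cS_{\tau^Q})\cap\bbQ[\cC_{\tau^Q}]$ has codimension one.

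First I would show $\res(\cS)\in\cQ$. Fix any $l$. Since $\res(\cS)$ is nontrivial, $D_l(\frak{P})=\det(S_l(\frak{P}))$ is a nonzero multiple of $\res(\cS)$ by \cite{CE} (cf.\ the discussion preceding \eqref{eq-RSresS}); in particular $D_l(\frak{P})\neq 0$, and since $\res(\cS)$ is irreducible in $\bbZ[\cC]$ it appears among the irreducible factors $Q_1,\ldots,Q_r$ of $D_l(\frak{P})$. By \eqref{eq-RSresS}, $\res(\cS)\in(\ags(\frak{P}))\cap\bbQ[\cC]$, which by Lemma \ref{lem-prime1} has $\epsilon$ as a generic zero, so $\res(\cS)(\epsilon)=0$ and hence $\res(\cS)\in\cQ$; put $Q:=\res(\cS)$.

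Next I would pin down $\tau^Q$ and verify codimension one. Nontriviality of $\res(\cS)$ means (A2) holds: $\cS$ has a unique algebraically essential subsystem $\cS_I$, and $\res(\cS)=\res(\cS_I)\in\bbZ[\cC_I]$. Because $\cS_I$ is algebraically essential, $\rank(\cL_{J'})\geq|J'|$ for every proper $J'\subsetneq I$, whence each mixed volume $\cM(\cQ_h\mid h\in I\setminus\{l\})$ is strictly positive; by \eqref{eq-MV} this gives $\deg(\res(\cS_I),\cC_l)>0$ for every $l\in I$, i.e.\ $Q$ involves exactly the coefficient blocks $\cC_l$, $l\in I$. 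Since each $\cC_\tau=\cup_{t\in\Lambda(\tau)}\cC_t$ with $\Lambda(\tau)$ down-closed in the derivative index and the blocks $\cC_t$ pairwise disjoint, minimality of $\tau^Q$ forces $\Lambda(\tau^Q)$ to be the down-closure of $I$; in particular $I\subseteq\Lambda(\tau^Q)$, so $\cS_I\subseteq\cS_{\tau^Q}$. Any algebraically essential subsystem of $\cS_{\tau^Q}$ is also a subsystem of $\cS$ and the rank conditions are intrinsic, so it must equal $\cS_I$; and $\cS_I\subseteq\cS_{\tau^Q}$ is itself algebraically essential. Thus $\cS_I$ is the unique algebraically essential subsystem of $\cS_{\tau^Q}$, so by (A2) applied to $\cS_{\tau^Q}$, together with \eqref{eq-RSresS}--\eqref{eq-SI}, the ideal $(\cS_{\tau^Q})\cap\bbQ[\cC_{\tau^Q}]=({\sf R}(\cS_{\tau^Q}))=(\res(\cS_I))$ is principal of codimension one.

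Then $Q=\res(\cS)\in\cQ$, the ideal $(\cS_{\tau^Q})\cap\bbQ[\cC_{\tau^Q}]$ has codimension one, and $\Xi(Q)=\Xi(\res(\cS))\neq 0$ by hypothesis, so Theorem \ref{thm-Qfactor} yields $\Xi(\res(\cS))=\frak{E}\,\dres(\frak{P})$ with $\frak{E}\in\frak{D}$, as claimed. I expect the delicate point to be the previous paragraph, specifically controlling $\tau^Q$ finely enough to guarantee $\cS_I\subseteq\cS_{\tau^Q}$: this rests on the non-obvious fact that $\res(\cS_I)$ actually depends on every coefficient block of the essential subsystem, which is exactly where positivity of the mixed volume of an essential support family enters. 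The rest is bookkeeping on top of Lemma \ref{lem-prime1}, Lemma \ref{lem-res}, and the facts recorded around \eqref{eq-RSresS}--\eqref{eq-MV}.
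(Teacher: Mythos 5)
Your proposal is correct, but it reaches the codimension-one hypothesis of Theorem \ref{thm-Qfactor} by a genuinely different and much heavier route than the paper. The paper's proof is a two-line sandwich: since $Q=\res(\cS)$ lies in $\bbQ[\cC_{\tau^Q}]$ and vanishes at the generic zero $\epsilon$, it belongs to the prime ideal $(\cS_{\tau^Q})\cap\bbQ[\cC_{\tau^Q}]$ (Lemma \ref{lem-res}(1) and the discussion around \eqref{eq-RStauQ}), and that prime ideal is in turn contained in $(\cS)\cap\bbQ[\cC]=(\res(\cS))$ by \eqref{eq-RSresS}; a prime ideal containing $\res(\cS)$ and contained in the height-one prime $(\res(\cS))$ must coincide with $(\res(\cS))$ restricted to $\bbQ[\cC_{\tau^Q}]$, hence has codimension one. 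This uses nothing beyond primeness and the containment chain. You instead verify codimension one intrinsically: you show the unique algebraically essential subsystem $\cS_I$ satisfies $\cS_I\subseteq\cS_{\tau^Q}$ and then apply the Pedersen--Sturmfels criterion to $\cS_{\tau^Q}$. That works, but it imports two facts the paper never needs: the strict positivity of the mixed volumes $MV_{-l}(\cS_I)$ for an essential family (which you correctly reduce to the rank conditions $\rank(\cL_{J'})\geq |J'|$, and which is exactly what forces $I\subseteq\Lambda(\tau^Q)$), and the applicability of (A2) and \eqref{eq-RSresS}--\eqref{eq-SI} to the subsystem $\cS_{\tau^Q}$ rather than only to the full system $\cS=\ags(\frak{P})$ of $L$ polynomials in $L-1$ variables. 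Both are true and standard in the sparse resultant literature, so there is no gap, and your argument yields the extra information that $\cS$ and $\cS_{\tau^Q}$ share the same essential subsystem; still, the paper's containment argument obtains the same conclusion with far less machinery and no appeal to mixed-volume positivity.
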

\begin{proof}
As explained in Section \ref{sec-sparse algebraic resultant}, $Q=\res(\cS)\in\cQ$ and
\[\res(\cS)\in (\cS_{\tau^Q})\cap\bbQ[\cC_{\tau^Q}]\subset (\cS)\cap\bbQ[\cS]=(\res(\cS)).\]
Thus $(\cS_{\tau^Q})\cap\bbQ[\cC_{\tau^Q}]$ has codimension one and by Theorem \ref{thm-Qfactor} the result follows.
\end{proof}

It is natural to wonder what are the conditions on $\frak{P}$ to guarantee $\Xi(D_l(\frak{P}))\neq 0$ (or $\Xi(Q)\neq 0$ for some $Q\in\cQ$) but these are not even available so far in the linear case, see \cite{R13}.
This question is left as a future research direction.

\begin{ex}
Let us consider the generic sparse differential system
$\frak{P}=\{\bbF_1=\frak{a}_1+\frak{a}_{11} u_1 u_2,\bbF_2=\frak{a}_2+\frak{a}_{21} u_1 u_{22}, \bbF_3=\frak{a}_3+\frak{a}_{31} u_{21}\}$, which is easily Laurent differentially essential and super essential. The modified Jacobi numbers are $J_1-\gamma=1,J_2-\gamma=1,J_3-\gamma=2$ and
\begin{align*}
\ps(\frak{P})=\{&\partial\bbF_1=\partial \frak{a}_1+ \partial \frak{a}_{11} u_1 u_2+\frak{a}_{11}u_{11} u_2+\frak{a}_{11} u_1 u_{21}, \bbF_1=\frak{a}_1+\frak{a}_{11} u_1 u_2,\\
 &\partial\bbF_2=\partial \frak{a}_2+\partial \frak{a}_{21} u_1 u_{22}+ \frak{a}_{21} u_{11} u_{22}+ \frak{a}_{21} u_1 u_{23}, \bbF_2=\frak{a}_2+\frak{a}_{21} u_1 u_{22},\\
 &\partial^2\bbF_3=\partial^2 \frak{a}_3+ \partial^2 \frak{a}_{31} u_{21}+2 \partial \frak{a}_{31} u_{22}+\frak{a}_{31} u_{23}, \partial\bbF_3=\partial \frak{a}_3+ \partial \frak{a}_{31} u_{21}+ \frak{a}_{31} u_{22}, \bbF_3=\frak{a}_3+\frak{a}_{31} u_{21}\}.
\end{align*}
The generic algebraic system associated to $\frak{P}$ is
\begin{align*}
\cS=\ags (\frak{P})=\{&P_1=c_{10}+c_{11} y_2 y_1+c_{12} y_2 y_3+c_{13} y_4 y_1, P_2=c_{20}+c_{21} y_2 y_1,\\
        &P_3=c_{30}+c_{31} y_5 y_1+c_{32} y_5 y_3+c_{33} y_6 y_1, P_4=c_{40}+c_{41} y_5 y_1,\\
        &P_5=c_{50}+c_{51} y_4+c_{52} y_5+c_{53} y_6, P_6=c_{60}+c_{61} y_4+c_{62} y_5, P_7=c_{70}+c_{71} y_4\}.
\end{align*}
We compute $D_1(\frak{P})$ (using "toricres04", \cite{CE}, with Maple 15) which has the next irreducible factors
\begin{align*}
&Q_1=c_{62}, Q_2=c_{40}, Q_3=-c_{70} c_{62} c_{51} + c_{70} c_{61} c_{52} - c_{60} c_{71} c_{52} + c_{62} c_{50} c_{71}\\
&Q_4=-c_{61} c_{70} + c_{71} c_{60}, Q_5=c_{70}, Q_6=c_{20} c_{40} c_{12} c_{41} c_{33} c_{71}^2 c_{62} c_{50}\\
&- c_{62} c_{40}  c_{70} c_{53} c_{32} c_{13} c_{21}- c_{62} c_{40} c_{70} c_{20} c_{51} c_{12} c_{41} c_{33} - c_{71} c_{40} c_{20} c_{12} c_{41} c_{60} c_{33} c_{52}\\
&+ c_{71} c_{60} c_{40} c_{53} c_{10} c_{32} c_{41} c_{21}+ c_{71} c_{40} c_{20} c_{12} c_{41} c_{60} c_{53} c_{31}
- c_{71} c_{20} c_{60} c_{30} c_{12} c_{41}^2  c_{53}\\
&- c_{71} c_{40} c_{20} c_{32} c_{41} c_{60} c_{53} c_{11}+ c_{40} c_{70} c_{20} c_{61} c_{52} c_{12} c_{41} c_{33}
- c_{40} c_{70} c_{53} c_{10} c_{61} c_{32} c_{41} c_{21}\\
&- c_{40} c_{70} c_{20} c_{12} c_{41} c_{53} c_{31} c_{61}+ c_{70} c_{20} c_{30} c_{61} c_{12} c_{41}^2  c_{53}
+ c_{40} c_{70} c_{20} c_{32} c_{41} c_{53} c_{11} c_{61}.
\end{align*}
Only $Q_6(\epsilon)=0$, thus $\cQ=\{Q_6\}$ and its specialization $\Xi(Q_6)\neq 0$,
\begin{align*}
\Xi(Q_6)=&-\frak{a}_{21}(-\frak{a}_{21}\frak{a}_1\frak{a}_2 \frak{a}_{11} \frak{a}_{31}^2\partial^2\frak{a}_3
+2 \frak{a}_{21} \frak{a}_{31} \frak{a}_2\frak{a}_1\frak{a}_{11}\partial\frak{a}_3\partial\frak{a}_{31}
-\frak{a}_{21}\frak{a}_{31}^2\partial\frak{a}_3\frak{a}_2\partial\frak{a}_1\frak{a}_{11}\\
&+\frak{a}_{21}\frak{a}_{31}\frak{a}_2\frak{a}_3\frak{a}_1\partial^2\frak{a}_{31}\frak{a}_{11}+
\frak{a}_{21}\frak{a}_{31}^2\frak{a}_2\frak{a}_1\partial\frak{a}_3\frak{a}_1-
2\frak{a}_{21}\frak{a}_2\frak{a}_3\frak{a}_1\partial\frak{a}_{31}^2\frak{a}_{11}\\
&+\frak{a}_{21}\frak{a}_2\frak{a}_3\frak{a}_{31}\partial\frak{a}_1\partial\frak{a}_{31}\frak{a}_{11}
+\frak{a}_{21}\frak{a}_{31}^2\frak{a}_1\partial\frak{a}_3\partial\frak{a}_2\frak{a}_{11}
-\frak{a}_{21}\frak{a}_3\frak{a}_1\partial\frak{a}_2\partial\frak{a}_{31}\frak{a}_{11}\frak{a}_{31}\\
&-\frak{a}_{21}\frak{a}_2\frak{a}_3\frak{a}_1\frak{a}_{31}\partial\frak{a}_{11}\partial\frak{a}_{31}
+\frak{a}_{31}^2\frak{a}_2^2\frak{a}_3\frak{a}_{11}^2
+\frak{a}_2\frak{a}_3\frak{a}_1\frak{a}_{11}\frak{a}_{31}\partial\frak{a}_{21}\partial\frak{a}_{31}
-\frak{a}_{31}^2\frak{a}_2\frak{a}_1\frak{a}_{11}\partial\frak{a}_3\partial\frak{a}_{21}).
\end{align*}
Observe that
$\epsilon_2,\ldots ,\epsilon_7$ are algebraically independent, since we can choose monomials
$y_1y_2$, $y_1y_5$, $y_3y_5$, $y_4$, $y_5$, $y_6$
respectively in each of them that are algebraically independent. Therefore $(\cS)\cap\bbQ[\cC]=(Q_6)$ and $\Xi(Q_6)=\frak{E}\dres(\frak{P})$. We can see that $\Xi(Q_6)=-\frak{a}_{21} H$, with $H(\zeta)=0$. Thus $H=\dres(\frak{P})$, which illustrates Theorem \ref{thm-Qfactor} and in particular Corollary \ref{cor-dresFactor}.
With a bit more work, we can prove that $\cS$ is algebraically essential so $Q_6$ is in fact the sparse algebraic resultant $\res(\cS)$. Therefore this example illustrates Corollary \ref{cor-XresFactor} as well.
\end{ex}

Another question for future investigation is, to give conditions on $\frak{P}$ so that $D_l(\frak{P})\neq 0$. Thus far we assume $D_l(\frak{P})\neq 0$ and remove the assumption $\Xi(D_l(\frak{P}))\neq 0$ (or $\Xi(Q)\neq 0$), making use of Algorithm \ref{alg-specialize}.
If $\Xi(D_l(\frak{P}))=0$, using Algorithm \ref{alg-specialize} with $D_l(\frak{P})$ as an input, by Theorem \ref{thm-alg}, we obtain $H_0$ in $[\frak{P}]\cap\frak{D}$.

\begin{rem}\label{rem-orderbound2}
Assuming $\frak{P}$ is a Laurent differentially essential and super essential system with $D_l(\frak{P})\neq 0$,
by Lemma \ref{eq-ordres1} and construction of $D_l(\frak{P})$ and $H_0$
\[\omega_i\leq \ord(H_0,A_i)\leq J_i-\gamma,\,\,\, i=1,\ldots ,n.\]
This proves Theorem \ref{thm-obound} under the assumption $D_l(\frak{P})\neq 0$.
\end{rem}

Let us assume that $H_0=H_1\cdot\ldots\cdot H_s$ as a product of irreducible factors.
Since $[\frak{P}]\cap\frak{D}$ is prime with $\zeta$ as a generic zero, the next set is nonempty
\[\cH:=\{H\in\{H_1,\ldots ,H_s\}\mid H(\zeta)=0\}.\]

\begin{lem}\label{lem-ordH0}
Given $H\in\cH$ and $\sigma=(\sigma_1,\ldots ,\sigma_n)$, with $\sigma_i:=\ord(H,A_i)$, it holds
\[\omega_i\leq \sigma_i\leq J_i-\gamma,\,\,\, i=1,\ldots ,n.\]
\end{lem}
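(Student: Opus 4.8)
The plan is to mirror the argument already used for $Q \in \cQ$ in Lemmas \ref{lem-tau}, \ref{lem-ordQ} and Theorem \ref{thm-Qfactor}, but transported from the algebraic side (polynomials in $\bbQ[\cC]$ and ideals $(\cS_\tau)\cap\bbQ[\cC_\tau]$) to the differential side (differential polynomials in $\frak{D}=\bbQ\{A\}$ and ideals $(\PS(\tau))\cap\bbQ[\Xi(\cC_\tau)]$). First I would establish the upper bound $\sigma_i \le J_i-\gamma$. By construction, $H_0$ is the output of Algorithm \ref{alg-specialize} applied to $D_l(\frak{P})$, so by Theorem \ref{thm-alg} it lies in $[\frak{P}]\cap\frak{D}$; moreover, tracing the algorithm, $H_0 \in \bbQ[\Xi(\cC_J)]=\bbQ[\cup_{i=1}^n A_i^{[J_i-\gamma]}]$ since the starting polynomial $D_l(\frak{P})\in\bbQ[\cC]=\bbQ[\cC_J]$ and each partial specialization $\Xi_\Delta$ only replaces a $c_l$ by some $\partial^k\frak{a}_i$ with $k\le J_i-\gamma$. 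Hence $\ord(H_0,A_i)\le J_i-\gamma$ for all $i$, and since each irreducible factor $H$ of $H_0$ has $\ord(H,A_i)\le\ord(H_0,A_i)$, we get $\sigma_i\le J_i-\gamma$.

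Next I would establish the lower bound $\omega_i \le \sigma_i$. The key point is that $H\in\cH$ means $H(\zeta)=0$ for the generic zero $\zeta$ of $[\frak{P}]\cap\frak{D}=\sat(\dres(\frak{P}))$ given in \eqref{eq-gdzero}; hence $H\in\sat(\dres(\frak{P}))$. Arguing exactly as in Lemma \ref{eq-ordres1}: if $\sigma_k=\ord(H,A_k)<\omega_k$ for some $k$, then under an elimination ranking in which every element of $A_k$ with index $>\sigma_k$ exceeds every other variable appearing, the differential remainder of $H$ with respect to $\dres(\frak{P})$ cannot be zero (because $\dres(\frak{P})$ genuinely involves $\partial^{\omega_k}\frak{a}_k$ by Lemma \ref{lem-ord}, while $H$ does not), contradicting $H\in\sat(\dres(\frak{P}))$. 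The case $\sigma_k=-\infty$ versus $\omega_k\ge 0$ is handled the same way. This gives $\omega_i\le\sigma_i$ for $i=1,\dots,n$.

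I expect the main obstacle to be the careful bookkeeping in the first half: verifying that the partial specializations performed inside Algorithm \ref{alg-specialize} never introduce a derivative $\partial^k\frak{a}_i$ of order exceeding $J_i-\gamma$. This requires checking that at step 5 the factored-out polynomial $\overline{H}$ and its image $\Xi_\Delta(\overline{H})$ stay inside $\bbD_{\Delta'}\subseteq \bbK_{\Delta'}[\overline{\cC}\backslash\Delta'\cup\Xi_{\Delta'}(\mathbf{c})]$, which is already ensured by Lemma \ref{lem-prime} and \eqref{eq-kdelta}; the point is just that $\Xi(c_l)$ for $c_l=c_{\lambda(\partial^k F_i)}$ equals $\partial^k\frak{a}_i$ with $k\le J_i-\gamma$ by \eqref{eq-Xicl} and the definition of $\ps(\frak{P})$ in \eqref{eq-ps}. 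Everything else is a direct translation of the algebraic arguments already in the paper, so no new ideas beyond those in Lemmas \ref{lem-prime1}, \ref{eq-ordres1} and \ref{lem-ord} are needed.
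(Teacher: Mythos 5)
Your proposal is correct and follows essentially the same route as the paper: the lower bound comes from $H(\zeta)=0\Rightarrow H\in[\frak{P}]\cap\frak{D}=\sat(\dres(\frak{P}))$ together with Lemma \ref{eq-ordres1} (and Lemma \ref{lem-ord}), and the upper bound comes from the construction of $D_l(\frak{P})$ and of $H_0$ via Algorithm \ref{alg-specialize}. The paper compresses the upper-bound bookkeeping into the phrase ``by construction,'' whereas you spell out that the partial specializations only ever introduce $\partial^k\frak{a}_i$ with $k\le J_i-\gamma$ and that orders of irreducible factors are bounded by the order of the product; this is a faithful expansion of the same argument, not a different one.
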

\begin{proof}
Since $H(\zeta)=0$ and $\zeta$ is a generic zero of $[\frak{P}]\cap\frak{D}$ then $H\in [\frak{P}]\cap\frak{D}$. By Lemma \ref{eq-ordres1} and construction of $D_l(\frak{P})$ and $H$, the result follows.
\end{proof}

Given $H\in\cH$ and $\sigma=(\sigma_1,\ldots ,\sigma_n)$, with $\sigma_i:=\ord(H,A_i)$, by Lemma \ref{lem-PStau} $(\PS(\sigma))\cap \bbQ[\Xi(\cC_{\sigma})]$ is a prime ideal with $\zeta_{\sigma}$ as a generic zero.
We have $H\in\bbQ[\Xi(\cC_{\sigma})]$ and $H(\zeta)=0$, thus $H(\zeta_{\sigma})=0$ and $H\in (\PS(\sigma))\cap \bbQ[\Xi(\cC_{\sigma})]$. If $\cI(H):=(\PS(\sigma))\cap \bbQ[\Xi(\cC_{\sigma})]$ has codimension one then $\cI(H)=(H)$ and, by Lemma \ref{lem-ordH0},
\begin{equation}\label{eq-IH0}
(\dres(\frak{P}))=(\PS(\omega))\cap\bbQ[\Xi(\cC_{\omega})]\subseteq (H).
\end{equation}
Hence $H=\alpha \dres(\frak{P})$, $\alpha\in\bbQ$ because $H$ is irreducible. The previous construction proves the next result.

\begin{thm}
Let $\frak{P}$ be Laurent differentially essential and super essential system. Let $H_0$ be the output of Algorithm \ref{alg-specialize} with $D_l(\frak{P})\neq 0$ as an input.
If there exists $H\in\cH$ such that $\cI(H)$ has codimension one then $H_0=\frak{E}\dres(\frak{P})$, with $\frak{E}\in\frak{D}$.
\end{thm}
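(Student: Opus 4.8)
The plan is to single out one irreducible factor of $H_0$ that, thanks to the codimension-one hypothesis, must coincide up to a scalar with $\dres(\frak{P})$; the remaining factors are then absorbed into $\frak{E}$.

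First I would factor $H_0 = H_1 \cdots H_s$ into irreducibles in $\frak{D}$. By Theorem \ref{thm-alg}, $H_0 \in [\frak{P}] \cap \frak{D} = \sat(\dres(\frak{P}))$, which by \eqref{eq-gdzero} is a prime differential ideal with generic zero $\zeta$; hence $H_0(\zeta) = 0$, so the set $\cH$ of irreducible factors vanishing at $\zeta$ is nonempty. Fix $H \in \cH$ and put $\sigma_i := \ord(H, A_i)$. Lemma \ref{lem-ordH0} gives $\omega_i \leq \sigma_i \leq J_i - \gamma$ for $i = 1, \ldots, n$; in particular, via \eqref{eq-cupdAi}, this yields the ring inclusion $\bbQ[\Xi(\cC_{\omega})] \subseteq \bbQ[\Xi(\cC_{\sigma})]$.

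Next I would analyze the ideal $\cI(H) = (\PS(\sigma)) \cap \bbQ[\Xi(\cC_{\sigma})]$. By Lemma \ref{lem-PStau}(1) it is prime with generic zero $\zeta_{\sigma}$ as in \eqref{eq-zetatau}. Since $H \in \bbQ[\Xi(\cC_{\sigma})]$ involves only the coordinates $\partial^k \frak{a}$ with $k \leq \sigma_i$, evaluating $H$ at the differential tuple $\zeta$ and at the algebraic tuple $\zeta_{\sigma}$ gives the same value, so $H(\zeta_{\sigma}) = H(\zeta) = 0$ and therefore $H \in \cI(H)$. By hypothesis $\cI(H)$ has codimension one, so this prime ideal is principal; as it contains the irreducible element $H$, we conclude $\cI(H) = (H)$. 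On the other hand, Lemma \ref{lem-PStau}(2) identifies $(\dres(\frak{P})) = (\PS(\omega)) \cap \bbQ[\Xi(\cC_{\omega})]$, and since $\omega_i \leq \sigma_i$ every generator $\partial^k \bbF_i$ of $(\PS(\omega))$ already occurs among the generators of $(\PS(\sigma))$, while the base rings are nested as above; thus $(\dres(\frak{P})) \subseteq \cI(H) = (H)$, as recorded in \eqref{eq-IH0}. Both $\dres(\frak{P})$ and $H$ being irreducible, this forces $H = \alpha \dres(\frak{P})$ for some $\alpha \in \bbQ$.

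Finally I would collect the remaining factors: $H_0 = \bigl(\prod_{H_k \neq H} H_k\bigr) \cdot H = \frak{E}\,\dres(\frak{P})$, with $\frak{E}$ the product of $\alpha$ and the other irreducible factors, an element of $\frak{D}$; this is the assertion. I expect the main obstacle to be the bookkeeping in the middle step: one must verify that the generic zero $\zeta_{\sigma}$ of the elimination ideal $\cI(H)$ is genuinely the algebraic restriction of the differential generic zero $\zeta$ (so that $H(\zeta) = 0$ transfers to $H(\zeta_{\sigma}) = 0$), and that the inclusion of ideals $(\dres(\frak{P})) \subseteq \cI(H)$ together with the inclusion of the ambient rings are compatible. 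Both of these are exactly what the inequalities $\omega_i \leq \sigma_i \leq J_i - \gamma$ from Lemma \ref{lem-ordH0} make possible, and the codimension-one hypothesis on $\cI(H)$ is indispensable to upgrade ``$H$ lies in $\cI(H)$'' to ``$\cI(H) = (H)$'', without which the divisibility $\dres(\frak{P}) \mid H$ would not follow.
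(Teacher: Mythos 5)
Your proposal is correct and follows essentially the same route as the paper: nonemptiness of $\cH$ from primality of $[\frak{P}]\cap\frak{D}$, the order bounds $\omega_i\leq\sigma_i\leq J_i-\gamma$ from Lemma \ref{lem-ordH0}, membership $H\in\cI(H)$ via the generic zero $\zeta_{\sigma}$ of Lemma \ref{lem-PStau}, the codimension-one hypothesis to get $\cI(H)=(H)$, and the inclusion \eqref{eq-IH0} plus irreducibility to conclude $H=\alpha\dres(\frak{P})$. The only difference is presentational — you make explicit the ring inclusion $\bbQ[\Xi(\cC_{\omega})]\subseteq\bbQ[\Xi(\cC_{\sigma})]$ and the compatibility of $\zeta$ with $\zeta_{\sigma}$, which the paper leaves implicit.
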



\para

To finish, we give degree bounds of $\dres(\frak{P})$ in terms of normalized mixed volumes.
As mentioned in Section \ref{sec-sparse algebraic resultant}, if $\res(\cS)$ is nontrivial, for $\cS=\ags(\frak{P})$ then $D_l(\frak{P})\neq 0$ has $\res(\cS)$ as an irreducible factor.
Furthermore, if $\cS$ is algebraically essential (and hence $\res(\cS)$ non trivial)
\begin{equation}\label{eq-boundMV}
\deg(\res(\cS),C_{\lambda(f)})=MV_{-\lambda(f)}(\cS),\,\,\, f\in\ps(\frak{P}),
\end{equation}
as in \eqref{eq-MV}.
\begin{thm}
Let $\frak{P}$ be a Laurent differentially essential and super essential system such that $\cS=\ags(\frak{P})$ is algebraically essential. If $\Xi(\res(\cS))\neq 0$ then
\[\deg(\dres(\frak{P}),A_i^{[\tau_i]})\leq \deg(\Xi(\res(\cS)), A_i^{[\tau_i]})\leq \sum_{k=0}^{\tau_i} \deg(\res(\cS),C_{\lambda(\partial^k f_i)})=\sum_{k=0}^{\tau_i} MV_{-\lambda(\partial^k f_i)}(\cS),\]
with $\tau^{\res(\cS)}=(\tau_1,\ldots ,\tau_n)$ given by Lemma \ref{lem-tau}.
\end{thm}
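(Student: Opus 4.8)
\emph{Proof plan.} The statement is a chain of two inequalities followed by an equality, and the plan is to verify each link by invoking results already established. First I would record that, since $\cS=\ags(\frak{P})$ is algebraically essential, $\res(\cS)$ is nontrivial, so Corollary \ref{cor-XresFactor} applies under the hypothesis $\Xi(\res(\cS))\neq 0$ and gives $\Xi(\res(\cS))=\frak{E}\,\dres(\frak{P})$ with $\frak{E}\in\frak{D}$. Because $\frak{D}=\bbQ\{A\}$ is a polynomial ring, the degree in the variable set $A_i^{[\tau_i]}$ is additive over products, whence
\[
\deg(\dres(\frak{P}),A_i^{[\tau_i]})\leq\deg(\Xi(\res(\cS)),A_i^{[\tau_i]}),
\]
which is the first inequality. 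Moreover $\res(\cS)$ is an irreducible factor of $D_l(\frak{P})$ and vanishes at the generic zero $\epsilon$ of Lemma \ref{lem-prime1}, so $\res(\cS)\in\cQ$ and Lemma \ref{lem-tau} furnishes the tuple $\tau^{\res(\cS)}=(\tau_1,\ldots,\tau_n)$ with $\res(\cS)\in\bbQ[\cC_{\tau^{\res(\cS)}}]$ and $\tau_i\leq J_i-\gamma$.

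For the middle inequality I would unwind the specialization map $\Xi$. By Lemma \ref{lem-tau}, $\res(\cS)$ is a polynomial only in the pairwise disjoint coefficient sets $\cC_{\lambda(\partial^k\bbF_i)}$, $i=1,\ldots,n$, $0\leq k\leq\tau_i$. Leibniz's rule gives
\[
\partial^k\bbF_i=\sum_{\alpha\in\cA(f_i)}\sum_{j=0}^{k}\binom{k}{j}\,(\partial^{j}\frak{a}_{\alpha}^{i})\,\partial^{k-j}\upsilon(y^{\alpha}),
\]
and each $\partial^{k-j}\upsilon(y^{\alpha})$ is a $\bbZ$-linear combination of monomials in $\cV(\frak{P})$; hence every coefficient of $\partial^k\bbF_i$, expressed in the monomial basis of $\cV(\frak{P})$, is a linear form in $A_i^{[k]}\subseteq A_i^{[\tau_i]}$, compatibly with the normalization \eqref{eq-Xicl}. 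Consequently $\Xi$ sends each indeterminate of $\cC_{\lambda(\partial^k\bbF_i)}$ to a polynomial of degree at most one in $A_i^{[\tau_i]}$, while for $i'\neq i$ it sends the indeterminates of $\cC_{\lambda(\partial^k\bbF_{i'})}$ into a polynomial ring whose variables are disjoint from $A_i^{[\tau_i]}$. Writing $\res(\cS)$ as a polynomial in the variables $\bigcup_{k=0}^{\tau_i}\cC_{\lambda(\partial^k\bbF_i)}$ over the remaining coefficient ring, substitution of these linear forms keeps the $A_i^{[\tau_i]}$-degree at most $\deg\bigl(\res(\cS),\bigcup_{k=0}^{\tau_i}\cC_{\lambda(\partial^k\bbF_i)}\bigr)$, which in turn is bounded by $\sum_{k=0}^{\tau_i}\deg(\res(\cS),\cC_{\lambda(\partial^k f_i)})$, since the degree in a union of disjoint variable sets does not exceed the sum of the individual degrees.

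Finally, for the closing equality I would invoke \eqref{eq-boundMV}: since $\cS$ is algebraically essential, $\deg(\res(\cS),\cC_{\lambda(f)})=MV_{-\lambda(f)}(\cS)$ for every $f\in\ps(\frak{P})$, and applying this to $f=\partial^k f_i$ for $k=0,\ldots,\tau_i$ (each lying in $\ps(\frak{P})$ because $k\leq\tau_i\leq J_i-\gamma$) and summing closes the chain; the case $\tau_i=-\infty$ is vacuous, since then $A_i^{[\tau_i]}=\emptyset$, both sums are empty, and $\omega_i=-\infty$ by Lemma \ref{lem-ordQ}, so all the quantities vanish. The step I expect to be the main obstacle is the middle inequality, namely carefully checking that $\Xi$ maps the coefficients in $\cC_{\lambda(\partial^k\bbF_i)}$ to linear forms in $A_i^{[k]}$ consistently with the choice of distinguished coefficient in \eqref{eq-Xicl}; once that is settled, the remaining steps are additivity and subadditivity of degree together with the cited results.
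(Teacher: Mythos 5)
Your proposal is correct and follows essentially the same route as the paper, whose proof is just two sentences citing Corollary \ref{cor-XresFactor} for the factorization $\Xi(\res(\cS))=\frak{E}\,\dres(\frak{P})$, Lemma \ref{lem-ordQ} for the order control, and \eqref{eq-boundMV} for the mixed-volume equality. Your explicit Leibniz-rule verification that $\Xi$ sends each indeterminate of $\cC_{\lambda(\partial^k\bbF_i)}$ to a linear form in $A_i^{[k]}$ is precisely the detail the paper leaves implicit in the middle inequality, and it is sound.
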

\begin{proof}
By Corollary \ref{cor-XresFactor}, $\Xi(\res(\cS))=\frak{E}\dres(\frak{P})$. The result follows from Lemma \ref{lem-ordQ} and \eqref{eq-boundMV}.
\end{proof}

\begin{ex}
If $\frak{P}=\{\bbG_1,\ldots ,\bbG_n\}$ is a non sparse system, with $\bbG_i$ a nonhomogeneous generic polynomial of order $o_i$ and degree $d_i$, it was proven in \cite{LYG}, Theorem 6.18 that $\cS=\ags(\frak{P})$ is algebraically essential and degree bounds for $\dres(\frak{P})$ in terms of mixed volumes are given in this case.
\end{ex}

If $\Xi(\res(\cS))=0$, we can use $\res(\cS)$ as an input of Algorithm \ref{alg-specialize}, that returns a nonzero polynomial $H_0\in [\frak{P}]\cap\frak{D}$. Assuming $H_0=H_1\cdot\ldots \cdot H_s$ as a product of irreducible factors, the set $\cH(\res(\cS))=\{H\in\{H_1,\ldots ,H_s\}\mid H(\zeta)=0\}$ is nonempty. Given $H\in\cH(\res(\cS))$, we have $\sigma_i=\ord(H,A_i)\leq \tau_i$, and by construction of $H_0$
\begin{equation}\label{eq-MVbound}
\deg(H, A_i^{[\sigma_i]})\leq \deg(H_0, A_i^{[\tau_i]})\leq \sum_{k=0}^{\tau_i} \deg(\res(\cS),C_{\lambda(\partial^k f_i)}).
\end{equation}
Furthermore, if $\cI(H)$ has codimension one then $\dres(\frak{P})=\alpha H$, $\alpha\in\bbQ$. If $\cS$ is algebraically essential then $\res(\cS)$ is nontrivial and the next result follows from \eqref{eq-MVbound} and \eqref{eq-boundMV}.

\begin{thm}
Let $\frak{P}$ be a Laurent differentially essential and super essential system such that $\cS=\ags(\frak{P})$ is algebraically essential. If there exists $H\in\cH(\dres(\frak{P}))$ such that $\cI(H)$ has codimension one then
\[\deg(\dres(\frak{P}), A_i^{[\sigma_i]})= \deg(H, A_i^{[\sigma_i]})\leq \sum_{k=0}^{\tau_i} \deg(\res(\cS), C_{\lambda(\partial^k f_i)})=\sum_{k=0}^{\tau_i} MV_{-\lambda(\partial^k f_i)}(\cS),\]
with $\tau^{\res(\cS)}=(\tau_1,\ldots ,\tau_n)$ given by Lemma \ref{lem-tau} and $\sigma_i=\ord(H,A_i)$.
\end{thm}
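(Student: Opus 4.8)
The plan is to read the theorem off the construction recorded in the two displays immediately preceding it, the one genuinely new ingredient being the codimension-one hypothesis on $\cI(H)$. First I would recall the setting. Since $\Xi(\res(\cS))=0$ and $\cS=\ags(\frak{P})$ is algebraically essential, $\res(\cS)$ is a nonzero element of $(\ags(\frak{P}))\cap\bbQ[\cC]$ (by Proposition \ref{prop-D} and \eqref{eq-RSresS}), lying in $\bbQ[\cC_{\tau}]$ with $\tau=\tau^{\res(\cS)}$ by Lemma \ref{lem-tau}. Feeding $\res(\cS)$ into Algorithm \ref{alg-specialize} produces, by Theorem \ref{thm-alg}, a nonzero $H_0\in[\frak{P}]\cap\frak{D}$ which lies in $\bbQ[\Xi(\cC_{\tau})]$; writing $H_0=H_1\cdots H_s$ into irreducibles and using that $[\frak{P}]\cap\frak{D}=\sat(\dres(\frak{P}))$ is differentially prime with generic zero $\zeta$, some factor must vanish at $\zeta$, so $\cH(\res(\cS))$ is nonempty. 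Fix $H$ in it with $\sigma_i=\ord(H,A_i)$; then $H_0\in\bbQ[\Xi(\cC_{\tau})]$ forces $\sigma_i\le\tau_i\le J_i-\gamma$, while $H(\zeta)=0$ puts $H\in[\frak{P}]\cap\frak{D}$, so $\omega_i\le\sigma_i$ by Lemma \ref{eq-ordres1}.

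Next I would identify $H$ with $\dres(\frak{P})$ up to a rational constant. From $H(\zeta)=0$ and $H\in\bbQ[\Xi(\cC_{\sigma})]$, specializing $\zeta$ to the generic zero $\zeta_{\sigma}$ of the prime ideal $\cI(H)=(\PS(\sigma))\cap\bbQ[\Xi(\cC_{\sigma})]$ (Lemma \ref{lem-PStau}(1)) gives $H(\zeta_{\sigma})=0$, hence $H\in\cI(H)$. Under the hypothesis that $\cI(H)$ has codimension one it is a height-one prime of the polynomial ring $\bbQ[\Xi(\cC_{\sigma})]=\bbQ[\cup_i A_i^{[\sigma_i]}]$, hence principal and generated by an irreducible polynomial; as the irreducible $H$ lies in it, $\cI(H)=(H)$. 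Since $\omega_i\le\sigma_i$ we have $\PS(\omega)\subseteq\PS(\sigma)$ and the inclusion of ambient rings $\bbQ[\Xi(\cC_{\omega})][\cV(\omega)^{\pm}]\subseteq\bbQ[\Xi(\cC_{\sigma})][\cV(\sigma)^{\pm}]$, so Lemma \ref{lem-PStau}(2) yields $(\dres(\frak{P}))=(\PS(\omega))\cap\bbQ[\Xi(\cC_{\omega})]\subseteq(H)$, i.e.\ $H\mid\dres(\frak{P})$; with $\dres(\frak{P})$ irreducible and $H$ nonconstant this gives $H=\alpha\,\dres(\frak{P})$, $\alpha\in\bbQ$, hence $\deg(\dres(\frak{P}),A_i^{[\sigma_i]})=\deg(H,A_i^{[\sigma_i]})$.

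Finally I would chain the degree estimates already on record. The bound $\sigma_i\le\tau_i$ together with the way $H_0$ is built in Algorithm \ref{alg-specialize} from $\res(\cS)$ gives \eqref{eq-MVbound}, that is $\deg(H,A_i^{[\sigma_i]})\le\deg(H_0,A_i^{[\tau_i]})\le\sum_{k=0}^{\tau_i}\deg(\res(\cS),C_{\lambda(\partial^k f_i)})$; and algebraic essentiality of $\cS$ turns each summand, via \eqref{eq-boundMV}, into the normalized mixed volume $MV_{-\lambda(\partial^k f_i)}(\cS)$ (note $\partial^k f_i\in\ps(\frak{P})$ for $k\le\tau_i\le J_i-\gamma$). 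Splicing the three displays produces the asserted chain. The substantive content sits in the lemmas already established — Lemma \ref{lem-PStau} on the prime ideals $(\PS(\tau))\cap\bbQ[\Xi(\cC_{\tau})]$ and Theorem \ref{thm-alg} on Algorithm \ref{alg-specialize} — so the only mild care needed here is checking that $H$ is irreducible in the ambient polynomial ring $\bbQ[\Xi(\cC_{\sigma})]$ (it descends from irreducibility in $\bbQ[\Xi(\cC_{\tau})]$, via that subring inclusion) and that the codimension-one hypothesis legitimately upgrades $H\in\cI(H)$ to $\dres(\frak{P})\mid H$; everything else is bookkeeping with the order inequalities and with \eqref{eq-MVbound}, \eqref{eq-boundMV}.
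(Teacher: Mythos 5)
Your proposal is correct and follows essentially the same route as the paper: it unpacks the construction in the two displays preceding the theorem, identifies $H$ with $\alpha\,\dres(\frak{P})$ via the codimension-one hypothesis on $\cI(H)=(\PS(\sigma))\cap\bbQ[\Xi(\cC_{\sigma})]$ and the inclusion $(\PS(\omega))\cap\bbQ[\Xi(\cC_{\omega})]\subseteq (H)$, and then chains \eqref{eq-MVbound} with \eqref{eq-boundMV}. (The passing remark ``$\dres(\frak{P})\mid H$'' in your last paragraph has the divisibility reversed relative to $(\dres(\frak{P}))\subseteq(H)$, but since both polynomials are irreducible and nonconstant the conclusion $H=\alpha\,\dres(\frak{P})$ is unaffected.)
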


\section{Conclusions}

Given a system $\cP$ of $n$ Laurent sparse differential polynomials in $n-1$ differential variables $U$ ($\cP\in\cD\{U\}$), a method has been designed to compute differential resultant formulas, which provide differential polynomials where the variables $U$ have been eliminated, elements of the differential elimination ideal $[\cP]\cap\cD$. The steps of this method are:
\begin{enumerate}
\item Through derivation obtain an extended system $\ps(\cP)$ of $\cP$ with $L$ polynomials in $L-1$ variables.
\item Compute determinants of Sylvester matrices $D_l(\cP)$, $l=1,\ldots ,L$ of an sparse algebraic generic system $\cS$ associated to $\cP$.
\item The specialization $\Xi(D_l(\cP))$ of $D_l(\cP)$ to the coefficients of $\ps(\cP)$ is a differential resultant formula for $\cP$ and it belongs to the differential elimination ideal $[\cP]\cap\cD$.
\end{enumerate}
For a generic system $\frak{P}$, if $\Xi(D_l(\frak{P}))$ is nonzero then it was shown that, under appropriate conditions on $\cS$, it is a multiple of the sparse differential resultant $\dres(\frak{P})$ defined by Li, Yuan and Gao in \cite{LYG}. If $\Xi(D_l(\frak{P}))=0$, for $D_l(\frak{P})\neq 0$ an algorithm is given to still obtain an element $H$ of the differential elimination ideal, which is proved to be a multiple of the sparse differential resultant, in the appropriate situation.
If the sparse algebraic resultant $\res(\cS)$ is nontrivial, its degree can be computed in terms of normalized mixed volumes, we use those to give bounds of the degree of $\dres(\frak{P})$.

\para

It would be interesting to study the appropriate conditions on $\frak{P}$ that guarantee to have nonzero determinats $D_l(\frak{P})$, $l=1,\ldots ,L$, or furthermore $\Xi(D_l(\frak{P}))\neq 0$.
If $\Xi(D_l(\frak{P}))\neq 0$, it still remains to give close formulas to describe $\dres(\frak{P})$ as a quotient of two determinants, as it was done by D'Andrea in the algebraic case, \cite{DA}.
If $\Xi(D_l(\frak{P}))= 0$, one would need to have more control on the method to obtain the multiple $H$ of $\dres(\frak{P})$, to give closed formulas to compute $H$ and the extraneous factor.

\para

\noindent {\sf Acknowledgments:} The author kindly thanks Alfonso C. Casal and Scott MacCallum for interesting discussions on these topics, and specially Carlos D'Andrea for helpful comments on this manuscript. This work was developed, and partially supported, under the research project
MTM2011-25816-C02-01. The author belongs to the Research Group ASYNACS (Ref. {\sf CCEE2011/R34}).

\end{document}